\documentclass[12pt]{article}

\usepackage[utf8]{inputenc}
\usepackage[T1]{fontenc}
\usepackage{amsmath, amssymb, amsthm}
\usepackage{mathtools}
\usepackage{bm}
\usepackage{dsfont}
\usepackage{graphicx}
\usepackage{tikz}
\usepackage{hyperref}
\usepackage[nameinlink, noabbrev]{cleveref}
\usepackage{tikz-cd}

\theoremstyle{plain}
\newtheorem{theorem}{Theorem}[section]
\newtheorem{lemma}[theorem]{Lemma}
\newtheorem{proposition}[theorem]{Proposition}
\newtheorem{corollary}[theorem]{Corollary}

\theoremstyle{definition}

\newtheorem{remark}[theorem]{Remark}

\theoremstyle{remark}
\newtheorem*{acknowledgement}{Acknowledgement}

\title{Surgery obstructions for knots in integer homology spheres}
\author{Yuhui CHEN 
  \thanks{Department of Mathematics, The Chinese University of Hong Kong, Shatin, N.T., Hong Kong;\\Email address: \texttt{yhchen@math.cuhk.edu.hk}}}
\date{}

\begin{document}

\maketitle

\begin{abstract}
For knot surgery in \(S^3\), Heegaard Floer homology provides an obstruction due to Hom--Karakurt--Lidman. We extend this obstruction to all integer homology spheres \(Y\), for both positive and negative \(1/m\) surgeries. This is used to test infinitely many small Seifert fibered examples and hyperbolic examples. Moreover, we deduce a lower bound on the \(b_2(W)\) of smooth cobordism between a pair of integer homology spheres.
\end{abstract}

\section{Introduction}

\subsection{Background}

A classical theorem due to Lickorish and Wallace states that every closed oriented three-manifold can be expressed as surgery on a link in \(S^3\) \cite{Lic62, Wal60}. A natural refinement is to ask which three-manifolds can be represented by surgery on a single knot. There are several obstructions to being a knot surgery in \(S^3\). Homological constraints arise from the fact that \(H_1(S^3_{p/q}(K))\cong \mathbb{Z}/p\). A more subtle obstruction is the weight of the fundamental group: a manifold obtained by surgery on a knot in \(S^3\) has weight one fundamental group (normally generated by a single element), which in particular forces cyclic abelianization. 

A theorem of Gordon and Luecke \cite{GL89} shows that if surgery on a non-trivial knot in \(S^3\) yields a reducible manifold, one summand must be a non-trivial lens space; consequently, a reducible integer homology sphere can never be a knot surgery in \(S^3\). Boyer and Lines \cite{BL90} gave infinite families of prime Seifert fibered manifolds with weight one fundamental group that are not knot surgeries in \(S^3\), using the Casson invariant and linking forms. 

In the Kirby list of problems in low-dimensional topology, \cite[Problem 3.6(C)]{Kir95} asks: does there exist an irreducible integer homology sphere that cannot be obtained by Dehn surgery on any knot in \(S^3\)? Using Taubes' periodic ends theorem \cite{Tau87}, Auckly constructed irreducible integer homology spheres that are not surgery on any knot in \(S^3\) \cite{Auc97}, answering a problem of Kirby affirmatively.

Saveliev \cite{Sav02} asked whether every Seifert fibered integer homology sphere arises as surgery on a knot in \(S^3\). Seifert fibered integer homology spheres are irreducible and have weight one fundamental group, which makes this question more complicated. Hom, Karakurt, and Lidman \cite{HKL16} introduced a new obstruction coming from Heegaard Floer homology. They proved that if an integer homology sphere \(Y\) can be expressed as \(S^3_{1/m}(K)\) for some knot \(K\subset S^3\) and if the correction term \(d(Y)\le -8\), then \(U\cdot HF^{\mathrm{red}}_0(Y)\neq 0\). Using this, they constructed infinitely many Seifert fibered integer homology spheres \(Z_p = \Sigma(p,2p-1,2p+1)\) (with even \(p\ge 8\)) that satisfy \(d(Z_p)=-p\) and \(U\cdot HF^{\mathrm{red}}_0(Z_p)=0\); therefore these \(Z_p\) are not surgery on any knot in \(S^3\). Moreover, \(Z_p\) have weight one fundamental group and are surgeries on two-component links. Their work answered the question of Saveliev  on Seifert fibered homology spheres.

The above results naturally lead to the following question: given an arbitrary integer homology sphere \(Y\), which integer homology spheres can be obtained by surgery on a knot in \(Y\)? In this paper, we extend the obstructions introduced by Hom, Karakurt, and Lidman \cite{HKL16} to this general setting. 

\subsection{Main Result}
For knots in a general integer homology sphere, the original obstructions become more complicated due to the non-trivial structure of the ambient manifold. Therefore, extending the surgery problem to such knots is interesting. In this paper, we extend some obstructions for knot surgery in \(S^3\) to the setting of arbitrary integer homology spheres.

Our first obstruction applies to negative surgeries. See Theorem~\ref{thm:intro-main1}.

\begin{theorem}
Let \(Y\) and \(Z\) be integer homology spheres such that every element of \(HF_{\mathrm{red}}(Z)\) has even \(\mathbb{Z}/2\mathbb{Z}\)-grading and \(d(Z)>d(Y)\). If \(Z = Y_{\pm1/m}(K)\) for some knot \(K\subset Y\) and some \(m>0\), then necessarily \(V_0(K)=0\).
\end{theorem}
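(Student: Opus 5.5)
The plan is to compute \(HF^+(Z)\) with the Ozsváth--Szabó rational surgery mapping cone formula, which holds for a null-homologous knot in any integer homology sphere, and to treat the two signs separately. For \(Y_{1/m}(K)\) we will use the cone \(\mathbb{X}^+_{1/m}\): \(m\) copies of each \(A^+_s\) mapping to \(m\) copies of each \(B^+ = CF^+(Y)\) via the maps \(v_s\) and \(h_s\). On the towers, \(v_s\) and \(h_s\) are multiplication by \(U^{V_s}\) and \(U^{H_s}\), where \(H_s = V_{-s}\). From this we take the standard correction-term formulas, checking that the proof over \(S^3\) goes through unchanged once the tower of \(HF^+(Y)\) is placed in degree \(d(Y)\):
\[
d(Y_{1/m}(K)) = d(Y) - 2V_0(K), \qquad d(Y_{-1/m}(K)) = d(Y) + 2V_0(\overline{K}),
\]
where \(\overline{K}\subset -Y\) is the mirror.

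\emph{Positive case.} Since \(V_0(K)\ge 0\), the first formula gives \(d(Z)\le d(Y)\), which contradicts \(d(Z)>d(Y)\). So no such surgery exists, and the statement holds vacuously. We will record this explicitly.

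\emph{Negative case.} Here \(Z = Y_{-1/m}(K)\), and we argue by contradiction, assuming \(V_0(K)>0\). For negative surgery the map \(D\colon \mathbb{A}^+ \to \mathbb{B}^+\) is not surjective in general. The cone splits, as a graded group, as \(\ker D \oplus \operatorname{coker} D\), with the cokernel shifted by one in grading relative to the \(A\)/\(B\) gradings. Everything coming from \(\ker D\), including the tower, then sits in one \(\mathbb{Z}/2\mathbb{Z}\)-grading, and the cokernel sits in the opposite one. Normalising via the tower, which has even grading since \(Z\) is an integer homology sphere, any nonzero element of \(\operatorname{coker} D\) gives an element of \(HF_{\mathrm{red}}(Z)\) of odd grading.

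The key step is to show that \(\operatorname{coker} D\neq 0\) when \(V_0(K)>0\). We will restrict to the tower parts. For the \(B\)-copies near \(s=0\), the image of the towers of \(A^+_0\) (and of neighbouring \(A^+_s\), with \(s\) ranging over the \(m\) copies in the \(1/m\) cone) under \(v\) and \(h\) misses the bottom \(V_0\) elements of the \(B\) tower. This is because both \(v_0\) and \(h_0\) are \(U^{V_0}\) on towers, and \(V_s\ge V_0\) for \(s\le 0\) (respectively \(H_s\ge H_0\) for \(s\ge 0\)). A chain of \(B\)'s between two \(A\)'s with exponents \(\ge V_0>0\) therefore leaves a nonzero finite cokernel of rank at least \(V_0\).

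The reduced parts of the \(A^+_s\) and of \(HF^+(Y)\) could in principle cancel this cokernel in homology. To rule this out, we will filter by the tower/reduced splitting and use that the tower components of \(v_s\) and \(h_s\) are determined by \(V_s\) and \(H_s\) alone. The hypothesis \(d(Z)>d(Y)\), which forces \(V_0(\overline{K})>0\), will be used to locate the grading of the surviving cokernel classes below the bottom of the \(Z\)-tower, so that they lie in \(HF_{\mathrm{red}}(Z)\) rather than merging with the tower.

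The result is a nonzero odd-graded element of \(HF_{\mathrm{red}}(Z)\), contradicting the assumption that \(HF_{\mathrm{red}}(Z)\) is supported in even grading. Hence \(V_0(K)=0\).

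I expect the main obstacle to be this cokernel step: ensuring that the non-tower parts of \(CF^+(Y)\) and of the \(A^+_s\) do not kill the cokernel classes forced by \(V_0(K)>0\). Over \(S^3\) this is automatic, but here \(HF^+(Y)\) has its own reduced part. I would first try a grading argument, comparing the degrees of the missed tower elements, which lie between \(d(Y)\) and \(d(Y)+2V_0-2\) after shifts, with the degrees where reduced classes could hit them. If that fails, I would pass to the truncated cone with \(\mathbb{F}[U]\)-coefficients and compute the rank of \(\operatorname{coker} D\) modulo \(U\)-torsion directly.
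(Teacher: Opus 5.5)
\emph{Verdict: the positive-slope half is fine, but the negative-slope argument has a genuine gap because kernel and cokernel are reversed.}

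Your positive-slope case is fine and matches the paper. It only needs the upper bound \(d(Y_{1/m}(K))\le d(Y)-2V_0(K)\). The exact equality you quote is not available for general \(Y\); the paper only has \(d(Y)-2V_0-2M(Y)\le d(Z)\le d(Y)-2V_0\).

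The negative-slope argument has kernel and cokernel the wrong way round.
\begin{itemize}
\item For a slope \(-1/m\), after truncation there is one more copy of \(\mathbf{B}^+\) than of \(\mathbf{A}^+\). So the tower of \(HF^+(Z)\) comes from \(\operatorname{coker}\mathbf{D}\), not from \(\ker\mathbf{D}\). It is \(\mathbb{B}^T\) that carries the even (tower) parity, and the towers \(\mathbf{A}^T_s\) that are odd.
\item Your key step fails as stated. On \(\mathcal{T}^+\), multiplication by any power of \(U\) is surjective, so the images of the \(A\)-towers do not miss the bottom of the \(B\)-tower. Instead, \(\mathbf{v}_0\) and \(\mathbf{h}_0\) kill the bottom \(V_0\) elements of the \(A_0\)-tower.
\item A cokernel of rank \(V_0\) is the \(HF^-\) picture. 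There those classes have the tower's parity, and the non-canonical choice of free part really can interfere with them.
\item The claim that \(\ker\mathbf{D}\) and \(\operatorname{coker}\mathbf{D}\) each occupy a single parity is false once \(HF_{\mathrm{red}}(Y)\) and the \(\mathbf{A}^{red}_s\) may live in both parities.
\item The hypothesis \(d(Z)>d(Y)\) plays no role in the negative case. Without an exact formula it does not force \(V_0(\overline{K})>0\) anyway.
\item Your fallback of computing the rank of \(\operatorname{coker}\mathbf{D}\) modulo torsion only recovers the free rank \(1\), so it cannot detect the classes you need.
\end{itemize}

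The missing idea is to work with \(\ker\mathbf{D}\).
\begin{itemize}
\item Take \(a\) among the bottom \(V_0\) elements of \(\mathbf{A}^T_0\) in the slot \(n=0\). The cone map sends it to \(\mathbf{v}_0(a)\in(0,\mathbf{B}^+)\) and \(\mathbf{h}_0(a)\in(1,\mathbf{B}^+)\). Both vanish, because \(H_0=V_0\) and both maps are \(U^{V_0}\) on the tower.
\item The interference you feared does not arise. \(\mathbf{A}^T_0=U^N\mathbf{A}^+_0\) for \(N\gg0\) is canonical, and any \(U\)-equivariant map sends it into \(U^N\mathbf{B}^+=\mathbf{B}^T\), whatever \(HF_{\mathrm{red}}(Y)\) is.
\item These elements have odd degree. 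In large degrees \(\ker\mathbf{D}\) vanishes and \(HF^+(Z)\) is spanned by images of \(\mathbb{B}^T\), so \(\mathbb{B}^T\) has the parity of \(d(Z)\), which is even. Since \(\mathbf{v}_0\) has degree \(-1\), the \(A\)-tower is odd; this is the paper's \(d_0^+=d(Y)+1-2V_0\).
\item By exactness \(\ker\mathbf{D}=\operatorname{im} j_*\), so these elements lift to odd-degree classes in \(HF^+(Z)\). Since the tower sits in even degrees, the lifts must lie in \(HF_{\mathrm{red}}(Z)\), contradicting the hypothesis.
\end{itemize}
This is exactly the paper's argument: odd grading of \(\ker D^T_{-1/m}\), its nonvanishing when \(V_0\neq0\), and the lift through \(j_*\). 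No comparison of degrees with the \(Z\)-tower is needed, because the classes have the opposite parity.
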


For positive surgeries, we obtain a criterion forcing a nontrivial action of \(U\) on the reduced Floer homology of the surgered manifold. See Theorem~\ref{thm:intro-main2}.

\begin{theorem}\label{thm:intro-main2}
Let \(Y\) be an integer homology sphere and let \(Z=Y_{1/m}(K)\) for some \(m>0\). There exists a constant \(C=C(Y,k)\) (depending on \(Y\) and an even integer \(k\)) such that if  
\[
d(Y)-d(Z)\ge C,
\]  
then \(U\cdot HF_k^{\mathrm{red}}(Z)\neq 0\).
\end{theorem}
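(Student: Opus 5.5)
We follow the Hom--Karakurt--Lidman strategy, replacing \(S^3\) by \(Y\) and the knot Floer complex of \(K\subset S^3\) by the knot-filtered complex \(CFK^\infty(Y,K)\). The input is the surgery formula of Ozsv\'ath--Szab\'o for \(1/m\) surgery on a null-homologous knot in an integer homology sphere. It expresses \(HF^+(Y_{1/m}(K))\) as the homology of a mapping cone \(\mathbb{X}^+_{1/m}\). This cone is built from \(m\) copies of each quotient complex \(A_s^+=C\{\max(i,j-s)\ge 0\}\) and infinitely many copies of \(B^+=C\{i\ge0\}\simeq CF^+(Y)\), joined by the maps \(v_s,h_s\).

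\textbf{Step 1: relate the \(d\)-invariants.} Following Ni--Wu, the surgery formula gives
\[
d(Z)=d(Y)-2V_0(K),
\]
where \(V_0\) is defined in \(Y\) through the \(U\)-tower part of \(H_*(A_0^+)\): the tower of \(A_0^+\) maps to the tower of \(HF^+(Y)\) by multiplication by \(U^{V_0}\). Since \(m>0\), the positive sign gives exactly this formula with no \(m\)-dependence. Hence the hypothesis \(d(Y)-d(Z)\ge C\) becomes \(V_0(K)\ge C/2\). So a large difference forces a large \(V_0\), and by monotonicity (\(V_{s}\ge V_{s+1}\ge V_s-1\)) also \(V_s\ge V_0-|s|\) for small \(|s|\).

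\textbf{Step 2: produce reduced classes with nonzero \(U\)-action.} Write \(H_*(A_s^+)=\mathcal{T}^+\oplus A_s^{\mathrm{red}}\) and \(HF^+(Y)=\mathcal{T}^+\oplus HF^{\mathrm{red}}(Y)\). In the truncated cone, the kernel of \(D=v+h\) restricted to the towers has the following structure. When \(V_s\) is large, the bottom \(V_s\) elements of the tower in \(A_s^+\) lie in \(\ker v_s\). For \(s\ge0\) we have \(H_s\ge V_s\), so these elements need not lie in \(\ker h_s\), and they must be paired with elements of neighbouring copies. As in HKL, for \(s=0\) one builds a cycle from the tower of \(A_0\) at a grading near the bottom. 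Using \(V_0=H_0\) large, one shows the following:
\begin{itemize}
\item this cycle is not in the image of the \(U\)-tower of \(HF^+(Z)\), because its grading lies above \(d(Z)\) in the relevant window;
\item it is not killed by \(U\), since \(U\) moves it to another such kernel element that is still non-tower.
\end{itemize}
Choosing the grading appropriately produces an element \(x\in HF^{\mathrm{red}}_k(Z)\) with \(Ux\neq0\). Parity matters here: kernel elements of the tower of \(A_0\) live in gradings of the parity of \(d(Y)\), which is even, so \(k\) must be even.

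\textbf{Step 3: control the error from \(Y\).} Unlike in \(S^3\), the reduced parts \(HF^{\mathrm{red}}(Y)\) and \(A_s^{\mathrm{red}}\) can interfere. A would-be element \(x\) might become a boundary, or be identified with a tower element, through reduced summands. There are two sources of such interference:
\begin{itemize}
\item \(HF^{\mathrm{red}}(Y)\) is supported in a bounded grading range, and the maps to it are constrained by it;
\item the reduced part of \(H_*(A_s^+)\) is supported within a window determined by \(Y\) and by the grading shift coming from \(V_s\).
\end{itemize}
I therefore plan to define \(C(Y,k)\) so that, in grading \(k\) relative to \(d(Z)\), all relevant reduced contributions vanish. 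A candidate is \(C=2(\text{max grading of } HF^{\mathrm{red}}(Y)-d(Y))+|k|+\) a constant such as \(8\) (compare the HKL bound \(d\le-8\)). Alternatively one can bound \(U\)-torsion orders by the maximal nilpotency \(N(Y)\) of \(U\) on \(HF^{\mathrm{red}}(Y)\). The main obstacle is precisely this step: showing that the reduced parts of \(A_s^+\) cannot cancel the constructed class. The \(A_s\) depend on \(K\), not just on \(Y\), so one must show that their effect in grading \(k\) is governed only by data of \(Y\). My first attempt is the large-surgery identification \(A_s^+\cong CF^+(Y_N(K),\mathfrak{s})\), together with the exact triangle/filtration comparison \(A_s\to B\). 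The aim is to show that any class of \(A_s^{\mathrm{red}}\) in the relevant gradings maps injectively to \(HF^{\mathrm{red}}(Y)\), so that its grading is bounded in terms of \(Y\).
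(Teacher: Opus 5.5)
There is a genuine gap, and it starts with Step 1. The identity \(d(Y_{1/m}(K))=d(Y)-2V_0(K)\), with no \(m\)-dependence, is the Ni--Wu formula for \(S^3\) (or \(L\)-space) ambient manifolds, and it fails for general \(Y\). Take \(Y=S^3_{-1}(T_{2,3})\), where \(HF_{\mathrm{red}}(Y)\cong\mathbb{F}\) sits in odd degree, and let \(K\) be the core of the surgery. A slam dunk gives \(Y_{1/m}(K)=S^3_{1/(m-1)}(T_{2,3})\), so \(d(Y_{+1}(K))=0\) but \(d(Y_{+1/2}(K))=-2\). Because \(D^+_{1/m}\) need not be onto \(\mathbb{B}^{\mathrm{red}}\), the tower of \(HF^+(Z)\) can continue below \(d(Y)-2V_0\) through odd-graded summands of \(HF_{\mathrm{red}}(Y)\). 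What actually holds is \(d(Y)-2V_0-2M(Y)\le d(Z)\le d(Y)-2V_0\). You need the lower bound, so your constant must absorb \(2M(Y)\). The paper takes \(C=2N+2M(Y)\), and this is the only place \(HF_{\mathrm{red}}(Y)\) enters its argument.

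The more serious problem is that Step 2 does not locate the right classes, and the mechanism that makes your Step 3 unnecessary is missing. Lying above \(d(Z)\) says nothing about being reduced, since the tower occupies every even degree \(\ge d(Z)\). Moreover, elements near the bottom of the tower of \(A_0\), i.e. in \(\ker v_0\), have gradings between \(d(Y)-2V_0\) and \(d(Y)-2\). So they can never produce a class in degree \(k\ge d(Y)\); already the HKL case \(Y=S^3\), \(k=0\), \(m=1\) needs \(A_2\).

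The missing ingredient is the explicit structure of \(\ker D^T_{1/m}\). Besides the tower, it contains torsion summands \(\mathbb{F}[U]/(U^{V_s})\) indexed by \(n=sm+i\). Using \(H_s=V_s+s\), their bottom and top gradings are \(d(Y)-2V_s+ms(s-1)+2si\) and \(d(Y)+ms(s-1)+2si-2\). The quadratic term lets you choose \(s\ge1\) and \(i\) with top grading \(\ge k\). The bound \(V_s\ge V_0-s\) then pushes the bottom grading below \(k\) once \(V_0\) is large.

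You also never address uniformity in \(m\), which the statement requires. The paper obtains it by using \(s=1\) with a single fixed \(i=i_0\) for all \(m>i_0\), and treating the finitely many smaller \(m\) separately.

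Finally, the straddling class lies in \(\ker D^T\subseteq\ker D=\operatorname{im} j_*\), so it lifts to \(HF^+_k(Z)\). By \(U\)-equivariance of \(j_*\), the image of the lift in \(HF^{\mathrm{red}}_k(Z)\) is not killed by \(U\). No control of the reduced parts of the \(A_s\) is ever needed. Your plan for that control would fail anyway: the reduced part of \(H_*(A_s^+)\) is a knot invariant not governed by \(Y\). For \(Y=S^3\) and \(K\) the figure-eight knot, \(H_*(A_0^+)\cong\mathcal{T}^+\oplus\mathbb{F}\) while \(HF_{\mathrm{red}}(S^3)=0\), so no injectivity into \(HF_{\mathrm{red}}(Y)\) is possible.
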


Using these obstructions together with the analysis of the Brieskorn spheres \(Z_p=\Sigma(p,2p-1,2p+1)\) (oriented as the boundary of a positive-definite plumbing), we prove (see Proposition~\ref{thm:intro-main3}):

\begin{proposition}\label{thm:intro-main3}
Let \(Y\) be an integer homology sphere. There exists a constant \(C(Y)>0\) (depending only on \(Y\)) such that:
\begin{enumerate}
\item For every even integer \(p>C(Y)\) with \(p\ge 8\), the manifold \(Z_p\) cannot be expressed as \(Y_{1/m}(K)\) for any knot \(K\subset Y\) and any \(m>0\).
\item If \(4<p<-d(Y)\) (with \(p\) even) and \(Z_p = Y_{-1/m}(K)\) for some knot \(K\subset Y\) and some \(m>0\), then necessarily \(V_0(K)=0\).
\end{enumerate}
\end{proposition}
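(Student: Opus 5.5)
The plan is to apply the two preceding theorems directly to \(Z_p=\Sigma(p,2p-1,2p+1)\), using the Floer-theoretic facts about these manifolds established by Hom--Karakurt--Lidman \cite{HKL16}. With the orientation as the boundary of a positive-definite plumbing, the relevant facts are:
\begin{itemize}
\item \(d(Z_p)=-p\) for even \(p\);
\item \(HF^{\mathrm{red}}(Z_p)\) is supported in even degrees;
\item \(U\cdot HF^{\mathrm{red}}_0(Z_p)=0\).
\end{itemize}
I would recall these from \cite{HKL16}, or recompute them with Némethi's graded root algorithm for the star-shaped plumbing. Since \(Z_p\) bounds a positive-definite plumbing with \(d\)-invariant conventions matching those of \cite{HKL16}, the signs carry over unchanged. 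The parity statement holds because the plumbing is almost rational, so \(HF^+\) is computed by a graded root. The lattice homology is then concentrated in even degrees, after the usual sign/orientation bookkeeping (possibly passing to \(-Z_p\) and using duality, which preserves the parity statement for \(HF_{\mathrm{red}}\)).

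For part (1), I would take \(k=0\) in Theorem~\ref{thm:intro-main2} and set \(C(Y)=C(Y,0)-d(Y)\), enlarged if necessary so that it is positive. Suppose \(p>C(Y)\) is even, \(p\ge 8\), and \(Z_p=Y_{1/m}(K)\) with \(m>0\). Then
\[
d(Y)-d(Z_p)=d(Y)+p> C(Y,0),
\]
so Theorem~\ref{thm:intro-main2} gives \(U\cdot HF^{\mathrm{red}}_0(Z_p)\neq 0\). This contradicts the vanishing of \(U\) on \(HF^{\mathrm{red}}_0(Z_p)\). The hypothesis \(p\ge 8\) is only needed to quote the computation from \cite{HKL16} in the range where it is stated (for instance, to ensure \(HF^{\mathrm{red}}_0(Z_p)\) is nonzero and the \(U\)-action on it is as described).

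For part (2), I would use the first theorem. Its parity hypothesis holds for \(Z_p\) by the fact above. Its inequality hypothesis \(d(Z_p)>d(Y)\) reads \(-p>d(Y)\), which is exactly \(p<-d(Y)\). Hence if \(Z_p=Y_{-1/m}(K)\) then \(V_0(K)=0\). The lower bound \(p>4\) again serves only to stay within the range where the graded-root description of \(HF^+(Z_p)\) (evenness of \(HF_{\mathrm{red}}\) and \(d=-p\)) has been verified.

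The main obstacle is the input about \(Z_p\): confirming that all of \(HF_{\mathrm{red}}(Z_p)\), not just the part used in \cite{HKL16}, lies in even grading with the correct orientation. I would handle this by drawing the graded root from the Laufer/Némethi \(\tau\)-function for the Seifert invariants \((p,2p-1,2p+1)\). In that description every reduced generator sits at a local minimum of \(\tau\), in degree \(2\tau\)-shift, hence even. The remaining steps are formal substitutions into the two theorems.
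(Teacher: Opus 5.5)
Your argument is correct and matches the paper's proof: part (1) applies the positive-surgery theorem with \(k=0\) and \(C(Y)=C(Y,0)-d(Y)\) to contradict \(U\cdot HF^{\mathrm{red}}_0(Z_p)=0\), and part (2) feeds the evenness of \(HF_{\mathrm{red}}(Z_p)\) and \(d(Z_p)=-p>d(Y)\) into the negative-surgery parity theorem. Drop the parenthetical about duality, which is wrong, since orientation reversal flips the parity of \(HF_{\mathrm{red}}\) (degree \(k\) goes to \(-k-1\)); it is also unnecessary, because \(Z_p=-\partial X(G)\) for the negative-definite plumbing \(X(G)\), and that is exactly the orientation whose \(HF^+\) the Ozsv\'ath--Szab\'o/N\'emethi graded root computes, so evenness holds directly.
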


The following case (see Proposition~\ref{prop:intro-main4}) provides an explicit numerical bound.

\begin{proposition}\label{prop:intro-main4}
Let \(Y = \Sigma(2,3,5)\) be the Poincaré homology sphere, oriented as the boundary of a positive-definite plumbing, so that \(d(Y) = -2\). Then for every even integer \(p \ge 14\), the Seifert fibered homology sphere \(Z_p = \Sigma(p,2p-1,2p+1)\) cannot be obtained by surgery on a knot in \(Y\).
\end{proposition}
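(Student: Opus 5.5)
We must rule out all surgeries on a knot in \(Y=\Sigma(2,3,5)\) producing \(Z_p\), \(p\ge 14\) even. Since \(Z_p\) is an integer homology sphere, any such surgery has slope \(\pm 1/m\) with \(m>0\) (the case \(m=0\) would give \(Z_p\cong Y\), which is excluded because \(d(Z_p)=-p\neq -2\)). We therefore treat positive and negative surgeries separately, using Theorem~\ref{thm:intro-main2} for the positive case and the negative-surgery obstruction (Theorem~1.1, whose case analysis is refined in Proposition~\ref{thm:intro-main3}(2)) for the negative case. We use as input the Heegaard Floer data of \(Z_p\) recorded in \cite{HKL16}: \(d(Z_p)=-p\), \(HF_{\mathrm{red}}(Z_p)\) is supported in even grading, and \(U\cdot HF^{\mathrm{red}}_0(Z_p)=0\) (more precisely, the relevant even grading \(k\) where Theorem~\ref{thm:intro-main2} is applied has trivial \(U\)-action).

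\emph{Positive surgeries.} We first make the constant \(C=C(Y,k)\) of Theorem~\ref{thm:intro-main2} explicit for \(Y=\Sigma(2,3,5)\). Here \(HF^+(Y)=\mathcal{T}^+_{-2}\) with \(HF_{\mathrm{red}}(Y)=0\), since the Poincaré sphere is an L-space. Because the constant in Theorem~\ref{thm:intro-main2} arises from comparing the surgery exact triangle and the mapping cone with the \(d\)-invariant and the grading shifts of the cobordism maps, it can be bounded in terms of \(d(Y)\) and \(\operatorname{rank} HF_{\mathrm{red}}(Y)\), and the latter vanishes. We redo that bookkeeping with \(d(Y)=-2\) and \(k=0\), expecting a bound of the form \(d(Y)-d(Z)\ge 12\), i.e.\ \(p-2\ge 12\). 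This gives exactly \(p\ge 14\). Then \(U\cdot HF^{\mathrm{red}}_0(Z_p)\neq0\), which contradicts the computation of \(HF^+(Z_p)\), so no positive surgery can produce \(Z_p\).

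\emph{Negative surgeries.} Here \(d(Z_p)=-p<-2=d(Y)\), so the hypothesis \(d(Z)>d(Y)\) of Theorem~1.1 fails. This is consistent with Proposition~\ref{thm:intro-main3}(2), whose range \(4<p<-d(Y)=2\) is empty. So we need a separate argument. Reversing orientation, \(Z_p=Y_{-1/m}(K)\) gives \(-Z_p=(-Y)_{1/m}(\overline K)\), and \(d(-Y)-d(-Z_p)=2-p<0\). For positive \(1/m\) surgery, the mapping cone formula gives \(d(Y_{1/m}(K))=d(Y)-2V_0(K)\le d(Y)\), so \(d(-Z_p)\le d(-Y)\), i.e.\ \(p\le 2\), a contradiction. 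Equivalently, \(d(Y_{-1/m}(K))\ge d(Y)\) is the inequality we need. We verify it using the \(d\)-invariant formula for knots in arbitrary integer homology spheres established earlier in the paper (the analogue of Ni--Wu with \(V_0\)).

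\emph{Main obstacle.} The hard step is making \(C(Y,0)\) explicit enough to reach exactly the threshold 14. This means tracking precisely the grading of the bottom of the tower in \(\mathbb{X}^+\) for the mapping cone of \(K\subset Y\), and checking where a nonzero element of \(HF_{\mathrm{red}}\) is forced to have nontrivial \(U\)-image. We would first follow the argument of \cite{HKL16} verbatim, replacing their bound \(d\le -8\) by \(d(Y)-d(Z)\ge C\), and then substitute \(d(Y)=-2\). The answer must satisfy \(-2-C\le -14\), hence \(C\le 12\), which is consistent with \(8+4\) coming from an L-space correction for \(Y\).
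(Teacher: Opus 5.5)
Your negative-slope argument is fine and is what the paper uses implicitly: by Lemma~\ref{lemma:neg1m} and Proposition~\ref{prop.2.7}, $d(Y_{-1/m}(K))\ge d(Y)$, while $d(Z_p)=-p<-2$. (In general only $d(Y_{1/m}(K))\le d(Y)-2V_0$ holds; equality needs $M(Y)=0$.)

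The genuine gap is the positive-slope case, which is the entire content of the statement beyond the first part of Proposition~\ref{thm:intro-main3}. You never determine $C(Y,0)$. You say you ``expect'' $d(Y)-d(Z)\ge 12$, and then argue that ``the answer must satisfy $-2-C\le -14$, hence $C\le 12$''. That reads the constant off the desired conclusion instead of proving it works. The mechanism you sketch (grading shifts of cobordism maps, $\operatorname{rank}HF_{\mathrm{red}}(Y)$, an ``$8+4$ L-space correction'') is also not where the constant comes from. The L-space property of $\Sigma(2,3,5)$ enters only through $M(Y)=0$.

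What is missing is the computation of Lemma~\ref{lemma:C_for_sigma235}.
\begin{itemize}
\item Since $M(Y)=0$, Corollary~\ref{cor25} gives $d(Z_p)=-2-2V_0$. So $V_0=(p-2)/2\ge 6$ exactly when $p\ge 14$.
\item For every $m>0$ you then need a summand $\tau_{d^+_{sm+i}}(V_s)$ of $\ker D^T_{1/m}$ that straddles grading $0$ and has $V_s\ge 2$. With $d(Y)=-2$, its top and bottom gradings are $ms(s-1)+2si-4$ and $ms(s-1)+2si-2-2V_s$. You control $V_s$ by $V_s\ge V_0-s$.
\item For $m\ge 3$, the choice $(s,i)=(1,2)$ needs only $V_0\ge 3$.
\item For $m=2$, the choice $(2,0)$ needs $V_0\ge 4$.
\item For $m=1$ one has $i=0$, so the top condition $s(s-1)\ge 4$ forces $s\ge 3$. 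The bottom condition then needs $V_3\ge 3$, which $V_3\ge V_0-3$ guarantees once $V_0\ge 6$.
\end{itemize}
This binding $m=1$ case is exactly where $N=6$, $C(Y,0)=2N=12$ and the threshold $p\ge 14$ come from. For $S^3$, $s=2$ already suffices when $m=1$, which gives the constant $8$ of \cite{HKL16}; the jump to $12$ is not an additive ``correction''. Without this computation, your argument only rules out $Z_p$ for $p$ beyond an unspecified constant, not for all even $p\ge 14$.
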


\begin{remark}
The original obstruction in ~\cite{HKL16} applies only to the $S^3$ and to the fixed grading $0$. We provide a more general and systematic framework, which works for any integer homology sphere $Y$ and for all even gradings $k$.
\end{remark}

We also consider the hyperbolic integer homology spheres and other homology spheres with arbitary complicated JSJ decomposition. See Theorem~\ref{thm:HL16-generalization}.

\begin{theorem}
Let \(Y\) be an arbitrary integer homology sphere.  
There exist infinitely many hyperbolic integer homology spheres which are not surgery on a knot in \(Y\).  
Similarly, one can construct infinitely many examples with arbitrarily complicated JSJ decomposition. 
\end{theorem}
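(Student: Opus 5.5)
The plan is to follow Hom--Lidman's strategy for $S^3$ and transplant it to an arbitrary integer homology sphere $Y$, using the two obstructions stated above.

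\textbf{Source of manifolds.} We need an infinite supply of hyperbolic integer homology spheres $Z$ for which:
\begin{itemize}
\item $d(Z)$ is very negative, so that $d(Y)-d(Z)\ge C(Y,k)$ for a fixed even $k$ (the relevant case is $k=0$);
\item $U\cdot HF^{\mathrm{red}}_k(Z)=0$.
\end{itemize}
Theorem~\ref{thm:intro-main2} then rules out every positive surgery $Y_{1/m}(K)$.

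For the negative surgeries $Y_{-1/m}(K)$ we also need:
\begin{itemize}
\item $HF_{\mathrm{red}}(Z)$ supported in even grading;
\item $d(Z)>d(Y)$ fails.
\end{itemize}
Since this second condition fails, the first obstruction is not the right tool there. For negative surgery I would instead use the mirror: if $Z=Y_{-1/m}(K)$, then $-Z=(-Y)_{1/m}(\overline K)$. Applying Theorem~\ref{thm:intro-main2} to $-Z$ and $-Y$ would then require $d(-Z)$ very negative, i.e.\ $d(Z)$ very positive. Because this conflicts with the positive case, the cleanest construction is a connected-sum-like splice.

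\textbf{Construction.} I would start from Hom--Lidman's examples, or from $Z_p$ and $-Z_{p'}$, and realize hyperbolic manifolds by surgery on suitable knots. Concretely, take $W_p$ to be a $1/n$-surgery on a hyperbolic knot in $Z_p$ (or in $Z_p\#-Z_{p'}$) chosen to lie in a small ball-like region. By Thurston's hyperbolic Dehn surgery theorem, all but finitely many $n$ give hyperbolic results. To keep the Floer data under control, I would use a knot $J$ with $V_0(J)=0$ and with trivial effect on the relevant part of $HF^+$. For example, take $J$ with $\nu^+=0$ invariants equal to those of the unknot, so that large $1/n$-surgery preserves $d$ and the $U$-action in the needed gradings via the mapping cone formula.

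\textbf{Handling both signs.} The clean route to the needed invariants is connected sum:
\[
Z=Z_p\#-Z_{q}.
\]
The Künneth formula lets us read off $d(Z)=d(Z_p)-d(Z_q)$ and the $U$-action on each graded piece. We then choose $p,q$ so that:
\begin{itemize}
\item $U$ kills the relevant grading both for $Z$ (positive case) and for $-Z$ (negative case);
\item $d$ is sufficiently extreme in the required direction.
\end{itemize}
Since Theorem~\ref{thm:intro-main2} needs $d(Y)-d(Z)$ large in one case and $d(-Y)-d(-Z)$ large in the other, we must choose the relevant gradings $k$ (possibly $k\neq 0$, which the even-$k$ generality allows) carefully. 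I would take $k$ shifted by $d(-Z_q)$ so that the grading of interest lands where $U$ vanishes. Finally, replace the reducible $Z$ by a hyperbolic manifold with the same $HF^+$ in the relevant range by a cut-and-reglue (mutation or large-$n$ surgery) argument. Alternatively, use the fact that Hom--Lidman's $S^3$ hyperbolic family has the relevant $HF^+$ truncation.

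\textbf{JSJ variant and main obstacle.} For arbitrarily complicated JSJ decomposition, splice in knot complements in the same way; the surgery formula again leaves the truncated $HF^+$ unchanged.

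The main obstacle is the simultaneous control of both signs. Making $d$ very negative while $U$ kills the right grading for both $Z$ and $-Z$ requires a careful choice of gradings $k$ and of $p,q$ relative to $C(Y,k)$ and $C(-Y,k')$. Secondarily, one must verify that the hyperbolization step does not alter the truncated $HF^+$. For that I would try the Hom--Lidman trick of doing surgery on a knot with $V_0=0$ and $\nu=0$ in each factor.
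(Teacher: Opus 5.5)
There is a genuine gap, and it sits exactly at what you call the main obstacle. You have overlooked Proposition~\ref{prop.2.7}: since $d(Y_{-1/m}(K))\ge d(Y)+2V_0(m(K))\ge d(Y)$, a negative $1/m$ surgery can never lower the $d$-invariant. So any $Z$ with $d(Z)<d(Y)$ is automatically not of the form $Y_{-1/m}(K)$. Your positive-case hypothesis $d(Y)-d(Z)\ge C(Y,k)$ already forces this, because $C(Y,k)=2N+2M(Y)>0$. There is therefore no tension between the two signs.

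Your replacement argument for the negative case cannot be made to work. Applying Theorem~\ref{thm:intro-main2} to $-Z=(-Y)_{1/m}(\overline{K})$ requires $d(Z)\ge d(Y)+C(-Y,k')$. The positive case requires $d(Z)\le d(Y)-C(Y,k)$. Both constants are positive, so no choice of gradings $k,k'$, or of $p,q$ in $Z_p\#-Z_q$, satisfies both for the same $Z$. Note also that Theorem~\ref{thm:intro-main1} only concludes $V_0(K)=0$, so it is not an obstruction by itself, and the even-grading condition plays no role here.

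The construction itself also fails as described. A knot lying in a ball of $Z_p$ or $Z_p\#-Z_{p'}$ has reducible complement. Its $1/n$-surgery is $Z_p\# S^3_{1/n}(J)$ (resp.\ with the extra summand), which is never hyperbolic; Thurston's theorem needs a knot whose complement in the ambient manifold is hyperbolic. Moreover, $V_0(J)=0$ does not make the surgery preserve $HF^+$ in a range of gradings. The mapping cone for slope $1/n$ contains $n$ copies of each $\mathbf{A}_k(J)$. Their reduced parts, interacting with $\mathbf{B}^{red}$ of a non-$L$-space ambient manifold, feed into $HF^+$ of the result and can create $U$-nontrivial classes in precisely the grading you need to control. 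The same objection applies to your JSJ ``splicing.''

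The paper avoids all of this by taking the Hom--Lidman manifolds $Z_{j,n}$ as a black box; they are already hyperbolic, or have complicated JSJ decomposition via cables and Whitehead doubles. Since $d(Z_{j,n})\to-\infty$, Proposition~\ref{prop.2.7} disposes of negative slopes. For positive slopes, $d(Y)-d(Z)\le 2V_0+2M(Y)$ forces $V_0\ge 4$. Then the $\mathbb{F}[U]/(U^{V_1})$ summand coming from $\ker D^{T}_{1/m}$ has $V_1\ge 3$, contradicting $U^2\cdot HF_{\mathrm{red}}(Z_{j,n})=0$. Your own ``alternative'' of using the Hom--Lidman family was the right instinct. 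Combined with Proposition~\ref{prop.2.7}, it makes the connected sum and hyperbolization steps unnecessary.
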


Finally, we translate these surgery obstructions into a topological constraint on four‑manifolds cobounding a given pair of integer homology spheres. See Proposition~\ref{prop:intro-main5}.

\begin{proposition}\label{prop:intro-main5}
Let \(Y\) be an integer homology sphere and let \(Z_p\) with \(p\) even and \(p\ge C(Y)\) as above. Then every smooth, compact, oriented, simply connected four‑manifold \(W\) with boundary \(\partial W = Y \cup (-Z_p)\) must satisfy \(b_2(W)\ge 2\).
\end{proposition}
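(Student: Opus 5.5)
We split according to $b_2(W)$. Since $\partial W = Y \cup (-Z_p)$ consists of integer homology spheres and $\pi_1(W)=1$, we have $H_1(W)=0$ and $H_2(W)$ is free. Moreover, the intersection form $Q_W$ on $H_2(W)$ is unimodular. It therefore suffices to rule out $b_2(W)=0$ and $b_2(W)=1$. Throughout we enlarge $C(Y)$ if necessary so that $p\ge C(Y)$ forces $p>-d(Y)$. This gives $d(Z_p)=-p<d(Y)$, and in fact a gap $d(Y)-d(Z_p)$ as large as we like.

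\emph{Case $b_2(W)=0$.} Here $W$ is an integer homology cobordism from $Y$ to $Z_p$. The correction term is a homology cobordism invariant, so $d(Y)=d(Z_p)=-p$. This contradicts $p>-d(Y)$.

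\emph{Case $b_2(W)=1$.} Now $Q_W=(\pm1)$. Suppose first that $Q_W=(-1)$, so $W\colon Y\to Z_p$ is negative definite. Take $\mathfrak s$ with $c_1(\mathfrak s)$ the generator of $H^2(W)$, so $c_1(\mathfrak s)^2=-1$. The Ozsváth--Szabó inequality for negative definite cobordisms then gives
\[
d(Z_p)\ \ge\ d(Y)+\frac{c_1(\mathfrak s)^2+b_2(W)}{4}=d(Y).
\]
This says $-p\ge d(Y)$, again a contradiction.

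It remains to treat $Q_W=(+1)$. Then $-W$, read as a cobordism from $Z_p$ to $Y$, is negative definite with $b_2=1$. For the same $\mathfrak s$ the map $F^+_{-W,\mathfrak s}\colon HF^+(Z_p)\to HF^+(Y)$ has grading shift $0$. It is $U$-equivariant and is an isomorphism on $HF^\infty$. This is precisely the structure used in the proof of Theorem~\ref{thm:intro-main2}. There the relevant map is induced by the (reversed) two-handle cobordism of the $1/m$ surgery, which after blowing up is a negative definite cobordism with one-dimensional form and spin$^c$ structures of square $-1$.

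The plan is to rerun that argument with $-W$ in place of the surgery cobordism, and so to conclude $U\cdot HF^{\mathrm{red}}_k(Z_p)\neq0$ once $d(Y)-d(Z_p)\ge C(Y,k)$. For the rerun, the key point is as follows. Since $F^+_{-W,\mathfrak s}$ is an isomorphism on $HF^\infty$ with grading shift $0$, the bottom of the tower of $HF^+(Z_p)$ in degree $-p$ must hit $U^{N}$ of a tower element of $Y$, with $N\approx (d(Y)+p)/2$ large. Suppose every reduced class in the even degree $k$ of $Z_p$ is killed by $U$, as happens for $Z_p$ at $k=0$ by the Hom--Karakurt--Lidman computation used in Proposition~\ref{thm:intro-main3}. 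Then equivariance forces the image of a tower element of $Y$ to have too large a $U$-torsion-free height in $HF^+(Y)$, once $N$ exceeds a constant depending only on $HF^+(Y)$.

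This yields a contradiction for $p\ge C(Y)$. Hence $b_2(W)\ge2$.

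The main obstacle is this last step. One must check that the proof of Theorem~\ref{thm:intro-main2} uses only formal properties of the cobordism map: negative definiteness, $b_2=1$, grading shift $0$ for the square $-1$ spin$^c$ structures, and $U$-equivariance. It must not use the surgery exact triangle or mapping cone specific to $Y_{1/m}(K)$. If it does use the mapping cone, I would instead argue via the $HF^-$ version, with the same $d$-gap bookkeeping.
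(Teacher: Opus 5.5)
\textbf{There is a genuine gap in the one case that matters.} Your case $b_2(W)=0$ is fine. So is the sign of $Q_W$ that the Ozsv\'ath--Szab\'o inequality rules out; there you are cleaner than the paper, which cites Proposition~\ref{prop.2.7}, a statement about surgeries.

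The gap is the remaining sign. There the reversed cobordism $Z_p\to Y$ is negative definite, and the inequality only gives $d(Y)\ge d(Z_p)$, which is no contradiction. You propose to rerun the proof of Theorem~\ref{thm:intro-main2} using only formal properties of $F^+_{-W,\mathfrak s}$. That proof is not formal. It rests on the mapping cone description of $\ker D^T_{1/m}$ (Proposition~\ref{prop:kerDT_pos}, Lemma~\ref{lem:tau_explicit}, Proposition~\ref{prop:straddle}): the finite towers $\tau_{d^+_{sm+i}}(V_s)$ lift via $j_*$ into $HF^+(Y_{1/m}(K))$, and their gradings straddle $k$ once $V_0(K)$ is large. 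Moreover, $V_0$ is forced to be large only by the surgery bound $d(Y)-d(Z)\le 2V_0+2M(Y)$. For an abstract $W$ there is no knot, no $V_0$, and no mapping cone.

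In fact no argument using only $U$-equivariance, grading shift $0$ and bijectivity on $HF^\infty$ can work. Consider the map $HF^+(Z_p)\to HF^+(Y)$ that is zero on a complement of the tower and zero on tower elements of degree $<d(Y)$. In each degree $\ge d(Y)$, let it send the tower element of $Z_p$ to the tower element of $Y$ in that degree. This map has all three properties whatever $HF_{\mathrm{red}}(Z_p)$ is. So your ``torsion-free height'' step yields no contradiction, and the $HF^-$ fallback has the same problem.

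\textbf{How the paper handles this case.} The paper does not argue this way. Before Lemma~\ref{lem:b2_ge2} it asserts that a simply connected definite $W$ with $b_2(W)=1$ is the trace of a $+1$ surgery on a knot in $Y$. The case then reduces to Proposition~\ref{thm:intro-main3} with $m=1$. That reduction to an honest knot surgery is the ingredient your proposal lacks.

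Be aware that this assertion does not follow from $\pi_1(W)=1$ and $b_2(W)=1$ alone. Take the boundary sum of a $+1$ surgery trace with a contractible $4$-manifold bounded by a nontrivial homology sphere $\Sigma$. The result is such a $W$, but its far end $Y_{+1}(K)\#\Sigma$ is reducible. For $Y=S^3$ it is therefore not surgery on any knot (Gordon--Luecke, as recalled in the introduction). So closing this case needs genuine extra input: either a handle decomposition of $W$ relative to $Y$ with a single $2$-handle, or an obstruction intrinsic to definite cobordisms. Your proposal supplies neither.
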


\begin{remark}
In fact, the framework developed in this paper applies to arbitrary pairs of integer homology spheres \((Y,Z)\). If the \(d\)-invariant and reduced Floer homology conditions are met, any smooth cobordism between them must have second Betti number at least two. Thus our obstructions might provide a general lower bound on the \(b_2\) of cobordisms between integer homology spheres.    
\end{remark}

This paper is organised as follows. Section~2 reviews the mapping cone formula for surgeries on knots in homology spheres, estimates for the correction term \(d(Y_{p/q}(K))\) in terms of these invariants. Section~3 develops the obstruction from \(\ker D_{1/m}^{T}\) of the mapping cone, treating negative and positive slopes separately. Section~4 introduces the calculation on Heegaard Floer homology of the Brieskorn spheres \(Z_p\). Section~5 applies the obstructions to these examples, and gives explicit numerical bounds.

\begin{acknowledgement}
I am deeply grateful to Zhongtao Wu for his guidance, patience, and the many hours of discussion. I would also like to thank Zhechi Cheng, Yijen Lee and Shunyu Wan for fruitful conversations and suggestions. The author was supported by a CUHK Vice-Chancellor's PhD Scholarship.
\end{acknowledgement}

\section{Review of the mapping cone formula}

\subsection{Notation and the mapping cone formula}

Given a knot \(K\) in an integer homology sphere \(Y\) we associate to it a doubly‑pointed Heegaard diagram as in \cite{OS04a}. We define a complex \(C = CFK^{\infty}(Y,K)\) generated (over a field \(\mathbb{F}=\mathbb{Z}/2\mathbb{Z}\)) by elements of the form \([\mathbf{x},i,j]\), where \(\mathbf{x}\) is an "intersection point" of the Heegaard diagram and \((i,j)\in \mathbb{Z}\times\mathbb{Z}\). Only those triples satisfying a certain condition are generators, see \cite{OS04a}. The differential does not increase \(i\) or \(j\), so \(C\) is doubly filtered by \((i,j)\). The doubly‑filtered chain homotopy type is a knot invariant \cite{OS04a}.

By \cite[Lemma 4.5]{Ras03} we may replace \(C\) by a reduced complex where all filtration‑preserving differentials are trivial. The complex \(C\) is invariant under the shift by \((-1,-1)\); thus there is an action of a formal variable \(U\) which translates by \((-1,-1)\). \(U\) is a chain map and is invertible, so \(C\) is an \(\mathbb{F}[U,U^{-1}]\)-module. As such, \(C\) is generated by elements with first filtration level \(i=0\). The group at filtration level \((0,j)\) is denoted \(\widehat{HFK}(Y,K,j)\) and is called the knot Floer homology of \(K\) at Alexander grading \(j\).

The complex \(C\) possesses an absolute \(\mathbb{Q}\)-grading and a relative \(\mathbb{Z}\)-grading. In fact, the complex \(C\) can be used to compute the Heegaard Floer homology of \(Y\) and \(CF^{+}(Y)\) in \cite{OS04a}. By grading the Heegaard Floer homology of \(Y\) (as in \cite{OS03a}) we obtain the grading on \(C\). The map \(U\) decreases this grading by \(2\).

Using the filtration we define the following quotients:
\[
A_k^{+}(K)=C\{i\ge0\text{ or }j\ge k\},\qquad k\in\mathbb{Z},
\]
and
\[
B^{+}=C\{i\ge0\}\cong CF^{+}(Y).
\]
We also define chain maps \(v_k,h_k:A_k^{+}(K)\to B^{+}\). The map \(v_k\) is the projection (zero on generators with \(i<0\), identity otherwise). The map \(h_k\) is the composition: first project to \(C\{j\ge k\}\), then multiply by \(U^{k}\) (shift by \((-k,-k)\)), and finally apply a chain homotopy equivalence that identifies \(C\{j\ge0\}\) with \(C\{i\ge0\}\) \cite{OS04a}.

Knot Floer homology detects the genus:
\begin{theorem}[Ni \cite{Ni09}]
Let \(Y\) be a homology sphere and \(K\subset Y\) a knot. Then
\[
g(K)=\max\{j\in\mathbb{Z}\mid\widehat{HFK}(Y,K,j)\neq0\}.
\]
\end{theorem}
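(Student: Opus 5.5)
The plan is to prove the two inequalities separately. Write \(g=g(K)\), and let \(S\subset Y\) be a minimal genus Seifert surface for \(K\); it exists because \(Y\) is an integer homology sphere, so \(K\) is null-homologous. The vanishing \(\widehat{HFK}(Y,K,j)=0\) for \(j>g\) is the easy direction. The nonvanishing \(\widehat{HFK}(Y,K,g)\neq 0\) is the substantive one, and I would reduce it to a sutured Floer statement.

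\emph{Step 1 (upper bound).} Following \cite{OS04a}, I would build a doubly pointed Heegaard diagram adapted to \(S\), so that \(S\) is realized by a relative periodic domain. The Alexander grading of a generator \(\mathbf{x}\) is computed by evaluating \(\langle c_1(\mathfrak{s}(\mathbf{x})),[\widehat S]\rangle\) on the capped-off surface in the \(0\)-surgery. An adjunction-type count of the local multiplicities of the generators' intersection points on the periodic domain then gives \(|A(\mathbf{x})|\le g\). Hence \(\widehat{HFK}(Y,K,j)=0\) for \(|j|>g\). Alternatively, I could cite the adjunction inequality for \(HF^+\) of the \(0\)-surgery together with the large-surgery identification of \(A^+_k\). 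The symmetry \(\widehat{HFK}(Y,K,j)\cong\widehat{HFK}(Y,K,-j)\) would let me check only one side.

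\emph{Step 2 (top grading equals sutured Floer homology).} Cut \(Y\setminus \nu K\) along \(S\) to get the balanced sutured manifold \(Y(S)\). By Juhász's surface decomposition formula, \(\widehat{HFK}(Y,K,g)\cong SFH(Y(S))\). The idea is that decomposing the knot complement (with two meridional sutures) along \(S\) isolates exactly the outer \(\mathrm{Spin}^c\) structures, namely Alexander grading \(g\).

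\emph{Step 3 (nonvanishing).} First reduce to the irreducible case. Write \(Y=Y_1\#Y_2\), where \(K\subset Y_1\) and \(Y_1\setminus K\) is irreducible; this is possible after splitting off summands disjoint from \(K\). The connected sum formula gives \(\widehat{HFK}(Y,K)\cong\widehat{HFK}(Y_1,K)\otimes\widehat{HF}(Y_2)\), with \(\widehat{HF}(Y_2)\neq 0\) for a homology sphere. Since the genus is unchanged, it suffices to treat \(Y_1\). In that case \(S\) is norm-minimizing, so \(Y(S)\) is taut. By Gabai's theorem, \(Y(S)\) admits a sutured manifold hierarchy ending in product sutured manifolds. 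Juhász's decomposition formula shows that SFH injects along each nice decomposition step, and product sutured manifolds have \(SFH\cong\mathbb{F}\neq 0\). Therefore \(SFH(Y(S))\neq0\), which gives \(\widehat{HFK}(Y,K,g)\neq 0\).

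The main obstacle is Step 3, specifically the tautness and hierarchy input. One must check that the decompositions can be taken "nice" so that the decomposition formula applies, and handle the case where the knot complement is reducible. I would first try to cite Juhász's theorem that taut balanced sutured manifolds with \(H_2=0\) have nonzero SFH, and verify \(H_2(Y(S))=0\) from \(H_1(Y)=0\).
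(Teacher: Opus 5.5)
Your proposal is correct, but it takes a genuinely different route from the one behind the citation. The paper gives no proof of this statement; it quotes it from Ni \cite{Ni09}, whose argument extends Ozsv\'ath--Szab\'o's genus-bounds strategy.

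\textbf{The cited route.} As I understand it, one identifies $\widehat{HFK}(Y,K,g)$ with $HF^+(Y_0(K),g-1)$, using twisted coefficients when $g=1$. Gabai's sutured manifold theory then gives a taut foliation of $Y_0(K)$ with the capped-off surface $\widehat S$ as a leaf; this is where irreducibility of $Y\setminus K$ enters. The foliation is perturbed by Eliashberg--Thurston to a weakly symplectically fillable contact structure. Nonvanishing then follows from Ozsv\'ath--Szab\'o's results for symplectic $4$-manifolds.

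\textbf{Your route.} You replace all of the foliation, contact and symplectic input by two Floer-theoretic facts due to Juh\'asz. The first is $\widehat{HFK}(Y,K,g)\cong SFH(Y(S))$. The second is $SFH\neq 0$ for taut balanced sutured manifolds with $H_2=0$, obtained by pushing Gabai's hierarchy through the decomposition formula. This needs no separate treatment of genus one. It also gives the sharper statement that the top group depends only on the complementary sutured manifold, which is the natural starting point for detecting fiberedness.

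\textbf{The trade-off.} The price is the hypothesis $H_2(Y(S))=0$. You correctly obtain it from Alexander duality in the homology sphere $Y$. It can fail for null-homologous knots in closed $3$-manifolds with $b_1>0$, which is the generality Ni works in and where the foliation argument still applies.

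\textbf{Shared parts.} Both routes need an irreducible complement. Your reduction via $\widehat{HFK}(Y,K)\cong\widehat{HFK}(Y_1,K)\otimes\widehat{HF}(Y_2)$, together with an innermost-disk argument giving $g_Y(K)=g_{Y_1}(K)$, is exactly what yields the unrestricted form in which the paper states the theorem. Your Step 1 bound is the same adapted-diagram argument of \cite{OS04a} in either approach.
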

Consequently, \(v_k\) (resp. \(h_k\)) are isomorphisms for \(k\ge g(K)\) (resp. \(k\le -g(K)\)).

We define chain complexes
\[
\mathcal{A}_{i,p/q}^{+}(K)=\bigoplus_{n\in\mathbb{Z}}(n,A_{\lfloor\frac{i+pn}{q}\rfloor}^{+}(K)),\qquad
\mathcal{B}^{+}=\bigoplus_{n\in\mathbb{Z}}(n,B^{+}),
\]
where the first entry is a label. A chain map \(D_{i,p/q}^{+}:\mathcal{A}_{i,p/q}^{+}(K)\to\mathcal{B}^{+}\) is defined by
\[
D_{i,p/q}^{+}(\{(k,a_k)\})=\{(k,b_k)\},\quad
b_k=v_{\lfloor\frac{i+pk}{q}\rfloor}^{+}(a_k)+h_{\lfloor\frac{i+p(k-1)}{q}\rfloor}^{+}(a_{k-1}).
\]
Let \(\mathbb{X}_{i,p/q}^{+}\) be the mapping cone of \(D_{i,p/q}^{+}\). We fix a relative \(\mathbb{Z}\)-grading by requiring that \(v_k,h_k\) decrease it by \(1\).

\begin{theorem}[Ozsváth–Szabó \cite{OS11}]
There is a relatively graded isomorphism of \(\mathbb{F}[U]\)-modules
\[
H_{*}(\mathbb{X}_{i,p/q}^{+})\cong HF^{+}(Y_{p/q}(K),i),
\]
where \(i\) denotes a \(\mathrm{Spin}^c\) structure.
\end{theorem}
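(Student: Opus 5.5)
The plan is to follow the strategy of Ozsváth–Szabó in \cite{OS11}, which works verbatim once one knows that the knot complex \(C=CFK^{\infty}(Y,K)\) and the maps \(v_k,h_k\) are defined for any integer homology sphere. The argument has two stages. First we prove the integer surgery formula (the case \(q=1\)). Then we reduce the rational case \(p/q\) to an integer surgery on an auxiliary knot in a connected sum with a lens space.

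\textbf{Integer case.} For \(q=1\) the complex \(\mathbb{X}^{+}_{i,p}\) is the cone of \(\bigoplus_n A^{+}_{i+pn}\to\bigoplus_n B^{+}\). We first truncate. By Ni's theorem, \(v_k\) is an isomorphism for \(k\ge g(K)\) and \(h_k\) is an isomorphism for \(k\le -g(K)\). This lets us cancel all but finitely many summands, leaving a quasi-isomorphic finite cone. We then apply the surgery exact triangle repeatedly, relating \(Y_p(K)\), \(Y_{p+N}(K)\) and \(Y\) for large \(N\), exactly as in \cite{OS04a}.

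For large \(N\), the large-surgery theorem identifies \(HF^{+}(Y_N(K),s)\) with \(H_*(A^{+}_s)\), and under this identification the cobordism maps become \(v_s\) and \(h_s\). The large-surgery theorem is proved from a doubly pointed diagram, and for \(Y\) an integer homology sphere the Spin\(^c\) bookkeeping is identical to that in \(S^3\).

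The exact triangle is realised at the chain level by a quasi-isomorphism from \(CF^{+}(Y_p(K))\) to the cone of \(CF^{+}(Y_{p+N})\to \bigoplus CF^{+}(Y)\). This step uses the "exact triangle detection lemma", with a homotopy built from counting holomorphic quadrilaterals. Identifying these maps with the truncated \(D^{+}_{i,p}\) gives the result, with relative grading fixed by requiring that \(v_k,h_k\) drop grading by \(1\).

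\textbf{Rational case.} Write \(p/q\) with \(q>0\). Let \(O\subset L(q,r)\) be a core circle, chosen so that \(Y_{p/q}(K)\) is an integral surgery on \(K\# O\subset Y\# L(q,r)\). We then apply the integer formula to \(K\#O\). This requires the integer formula for knots in rational homology spheres, which is the same argument tracking Spin\(^c\) structures.

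The Künneth formula gives \(CFK^{\infty}(Y\#L(q,r),K\#O)\cong CFK^{\infty}(Y,K)\otimes CFK^{\infty}(L(q,r),O)\). Since \(O\) is Floer simple, with one generator per Spin\(^c\) structure, each \(A^{+}\) complex for \(K\#O\) is identified with some \(A^{+}_{\lfloor (i+pn)/q\rfloor}(K)\). The floor arises from how the Alexander filtration of \(O\) shifts across the \(q\) Spin\(^c\) structures of \(L(q,r)\). This reorganises the integer cone into \(\mathbb{X}^{+}_{i,p/q}\).

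\textbf{Main obstacle.} I expect the hardest step to be the chain-level identification, for large \(N\), of the cobordism maps with \(v_s\) and \(h_s\). In particular, \(h_s\) requires the homotopy equivalence \(C\{j\ge0\}\simeq C\{i\ge0\}\) coming from a change of basepoint. Together with this, one must control the holomorphic-quadrilateral homotopies so that the triangle becomes a genuine mapping cone, and do the Spin\(^c\)/Alexander bookkeeping in the lens-space step. I would take these directly from \cite{OS04a,OS11}, checking only that nothing uses \(Y=S^3\) beyond \(H_1(Y)=0\).
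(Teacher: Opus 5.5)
Your outline is correct and is essentially the argument of \cite{OS11}, which is all the paper relies on here, since it quotes the theorem without proof. That argument has two stages: the integer formula via large surgery and the $N$-fold surgery exact triangle promoted to a mapping cone, and then the reduction of $p/q$ surgery to a Morse surgery on $K\#O\subset Y\#L(q,r)$ using the K\"unneth formula for the Floer simple knot $O$.
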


Absolute gradings are fixed by requiring that for the unknot they coincide with the gradings of \(HF^{+}\) of the surgery, i.e. \(d(L(p,q),i)+d(Y)\).

Passing to homology we set
\[
\mathbf{A}_k^{+}(K)=H_{*}(A_k^{+}(K)),\quad \mathbf{B}^{+}=H_{*}(B^{+}),\quad
\mathbb{A}_{i,p/q}^{+}(K)=H_{*}(\mathcal{A}_{i,p/q}^{+}(K)),\quad
\mathbb{B}^{+}=H_{*}(\mathcal{B}^{+}),
\]
and denote by \(\mathbf{v}_k,\mathbf{h}_k,\mathbf{D}_{i,p/q}^{+}\) the induced maps.

The short exact sequence of chain complexes
\[
0 \longrightarrow \mathcal{B}^{+} \xrightarrow{\;i\;} \mathbb{X}_{i,p/q}^{+} \xrightarrow{\;j\;} \mathcal{A}_{i,p/q}^{+}(K) \longrightarrow 0
\]
induces a long exact sequence in homology. Because the connecting homomorphism has degree \(-1\) and all groups are \(\mathbb{F}[U]\)-modules, this long exact sequence collapses to the following exact triangle:
\[
\begin{tikzcd}[row sep=3em, column sep=3em]
\mathbb{A}_{i,p/q}^{+}(K) \arrow[r, "\mathbf{D}_{i,p/q}^{+}"] & \mathbb{B}^{+} \arrow[d, "i_*"] \\
& HF^{+}(Y_{p/q}(K),i) \arrow[ul, "j_*"']
\end{tikzcd}
\]
Here \(i_*\) is induced by the inclusion \(i\) and \(j_*\) is the connecting homomorphism coming from the projection \(j\). All maps are \(U\)-equivariant and preserve the relative grading (with appropriate shifts).

Let \(\mathcal{T}_d^{+}\) be the graded \(\mathbb{F}[U]\)-module \(\mathbb{F}[U,U^{-1}]/U\cdot\mathbb{F}[U]\) with \(\deg 1=d\) and \(U\) decreasing the grading by \(2\). For a rational homology sphere \(Z\) with \(\mathfrak{s}\) denotes \(\mathrm{Spin}^c\) structure,
\[
HF^{+}(Z,\mathfrak{s})=\mathcal{T}_{d(Z,\mathfrak{s})}^{+}\oplus HF_{red}(Z,\mathfrak{s}),
\]
where \(d(Z,\mathfrak{s})\) is the correction term and \(HF_{red}(Z,\mathfrak{s})\) is finitely generated reduced Floer homology. We write
\[
HF_{red}(Z)=\bigoplus_{\mathfrak{s}\in\mathrm{Spin}^c(Z)}HF_{red}(Z,\mathfrak{s}).
\]

We have the decompositions
\[
\mathbf{A}_k^{+}(K)\cong\mathbf{A}_k^{T}(K)\oplus\mathbf{A}_k^{red}(K),\qquad
\mathbf{B}^{+}=\mathbf{B}^{T}\oplus\mathbf{B}^{red},
\]
with \(\mathbf{A}_k^{T}(K)\cong\mathcal{T}^{+}\cong\mathbf{B}^{T}\) and \(\mathbf{A}_k^{red}(K),\mathbf{B}^{red}\) annihilated by a large power of \(U\). Define
\[
\mathbb{A}_{i,p/q}^{T}(K)=\bigoplus_{n}(n,\mathbf{A}_{\lfloor\frac{i+pn}{q}\rfloor}^{T}(K)),\quad
\mathbb{A}_{i,p/q}^{red}(K)=\bigoplus_{n}(n,\mathbf{A}_{\lfloor\frac{i+pn}{q}\rfloor}^{red}(K)),
\]
\[
\mathbb{B}^{T}=\bigoplus_{n}(n,\mathbf{B}^{T}),\qquad
\mathbb{B}^{red}=\bigoplus_{n}(n,\mathbf{B}^{red}).
\]

The maps \(\mathbf{v}_k^{T},\mathbf{h}_k^{T}\) (restrictions to \(\mathbf{A}_k^{T}(K)\)) are multiplications by \(U^{V_k}\) and \(U^{H_k}\) respectively. The numbers \(V_k\) and \(H_k\) satisfy:
\[
\begin{aligned}
&\bullet\ V_k = H_{-k} \quad &&\text{(1)}\\
&\bullet\ V_k \ge V_{k+1} \ge V_k-1\ \text{and}\ H_{k+1}-1 \le \ H_k \le H_{k+1} \quad &&\text{(2)}\\
&\bullet\ V_k \to +\infty\ \text{as } k\to -\infty\ \text{and}\ H_k \to +\infty\ \text{as } k\to +\infty \quad &&\text{(3)}\\
&\bullet\ V_k = 0\ \text{for } k\ge g(K)\ \text{and}\ H_k = 0\ \text{for } k\le -g(K) \quad &&\text{(4)}
\end{aligned}
\]
These properties are completely analogous to the case of knots in \(S^3\); see \cite[Section 7]{Ras03}, \cite{NW13}, \cite{Gai14}.

We have the proposition as well:

\begin{proposition}\cite[Lemma 6.1]{Gai15}
  For a knot \(K\) in an integer homology sphere \(Y\), the numbers \(V_k\) and \(H_k\) satisfy
\[
H_k = V_k + k \qquad\text{for all } k\in\mathbb{Z}. \tag{5}
\]  
\end{proposition}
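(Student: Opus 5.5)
The plan is to compare the absolute gradings of the tower generators of \(\mathbf{A}_k^{T}(K)\) and \(\mathbf{B}^{T}\) via the two maps \(\mathbf{v}_k\) and \(\mathbf{h}_k\). Since \(\mathbf{v}_k^T\) is multiplication by \(U^{V_k}\) and \(\mathbf{h}_k^T\) by \(U^{H_k}\), the grading of the bottom element of \(\mathbf{A}_k^T(K)\) can be computed in two ways. Requiring the two answers to agree should force \(H_k - V_k = k\).

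\emph{Step 1: setup.} Work in the mapping cone for large integer surgery \(Y_N(K)\) with \(N\gg 0\), where \(HF^+(Y_N(K),i)\cong H_*(A_i^+)\) for \(|i|\le N/2\). Equivalently, use the doubly filtered complex \(C=CFK^\infty(Y,K)\) directly. The key structural input is the symmetry of \(C\) under interchanging the roles of \(i\) and \(j\). Combined with the chain homotopy equivalence \(C\{j\ge 0\}\simeq C\{i\ge0\}\cong CF^+(Y)\), which is used to define \(h_k\), this gives the absolute gradings. The bottom of the tower in \(\mathbf{B}^+\) sits in grading \(d(Y)\), in both the \(v\)- and \(h\)-normalizations.

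\emph{Step 2: grading bookkeeping.} Let \(a\) be the bottom tower element of \(\mathbf{A}_k^T\). The map \(v_k\) is a projection that preserves the Maslov grading on \(C\). So \(\mathbf{v}_k(a)=U^{V_k}\cdot\mathbf{1}\) gives \(\mathrm{gr}(a)=d(Y)+2V_k\).

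The map \(h_k\) first projects to \(C\{j\ge k\}\) and then multiplies by \(U^k\), which shifts the grading by \(-2k\). The final identification \(C\{j\ge0\}\simeq C\{i\ge0\}\) preserves the absolute grading; this holds because \(Y\) is an integer homology sphere, so both sides compute \(HF^+(Y)\), with towers starting at \(d(Y)\). Hence \(\mathrm{gr}(a)-2k=d(Y)+2H_k\). Equating the two expressions gives \(2V_k=2H_k-2k\), that is, \(H_k=V_k+k\).

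\emph{Main obstacle.} The delicate point is justifying that the last identification in \(h_k\) is grading-preserving with no extra shift when the Maslov grading of \(C\) is taken relative to the \(i\)-filtration. For \(S^3\) this is standard. For general \(Y\), I would argue as follows. The complex \(C\{j\ge 0\}\) is the \(CF^+\) complex of the Heegaard diagram obtained by swapping the basepoints \(w\) and \(z\), so its homology is \(HF^+(Y)\) with its canonical absolute grading. The gradings \(\mathrm{gr}_w\) and \(\mathrm{gr}_z\) on \(C\) differ by \(2A\), that is, by \(2(j-i)\), since \(K\) is nullhomologous in a homology sphere. This produces exactly the shift accounted for above. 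Once this is in place, the rest is routine, and it is consistent with properties (1) and (4): for \(k\ge g\) we have \(V_k=0\) and \(H_k=k\).
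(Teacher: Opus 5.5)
The paper gives no argument of its own for (5); it only cites \cite[Lemma 6.1]{Gai15}. Your strategy is the right one: $\mathbf{v}_k$ has degree $0$, $\mathbf{h}_k$ has degree $-2k$ with respect to the Maslov grading of $C$, and both land in a tower whose bottom is $d(Y)$. However, the central computation in Step 2 is wrong as written.

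\textbf{Sign error.} Let $a$ be the bottom of $\mathbf{A}_k^T(K)$. If $\mathbf{v}_k$ preserves grading and acts on towers as $U^{V_k}$, then it kills the lowest $V_k$ elements of the source tower. Hence $\mathrm{gr}(a)=d(Y)-2V_k$, not $d(Y)+2V_k$. The unknot in $S^3$ already shows this: for $k<0$ the tower of $A_k^+$ starts in grading $2k=-2V_k$. In the same way, $\mathrm{gr}(a)-2k=d(Y)-2H_k$, not $d(Y)+2H_k$.

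\textbf{Arithmetic slip.} Your two displayed equations, taken at face value, give $2V_k=2H_k+2k$, i.e.\ $H_k=V_k-k$. This is false: for $k\ge g(K)$ with $k>0$ it would give $H_k=-k<0$. The line ``$2V_k=2H_k-2k$'' does not follow from your equations. The correct identity appears only because this slip cancels the sign error. With the corrected signs you get $d(Y)-2V_k-2k=d(Y)-2H_k$, which is exactly $H_k=V_k+k$.

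\textbf{The ``main obstacle'' paragraph.} It is not that $\mathrm{gr}_w$ and $\mathrm{gr}_z$ differ on $C$ and that this difference produces a shift. On intersection points one has $M_w(\mathbf{x})-M_z(\mathbf{x})=2A(\mathbf{x})$. On a generator $[\mathbf{x},i,j]$ with $j-i=A(\mathbf{x})$ this is exactly compensated: $M_w(\mathbf{x})+2i=M_z(\mathbf{x})+2j$. Consequently:
\begin{itemize}
\item the Maslov grading of $C$, restricted to $C\{j\ge 0\}$, is the canonical absolute grading of the $z$-pointed complex computing $HF^+(Y)$;
\item the identification $C\{j\ge0\}\simeq C\{i\ge0\}$ introduces no shift;
\item the only shift in $h_k$ is the $-2k$ coming from $U^k$.
\end{itemize}
Adding a further $2(j-i)$ correction on top of that would double count, and it is not even a constant shift.

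Finally, the $i\leftrightarrow j$ symmetry you list as the key input is not needed for (5); it is what gives (1).
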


\subsection{Correction terms for surgery}

In this subsection we recall a general estimate for the correction terms of a surgery on a knot in an integer homology sphere. The following proposition is a slight generalisation of \cite[Proposition 1.6]{NW13} and \cite[Proposition 3.3]{Gai15}.

\begin{proposition}
    Let \(Y\) be a homology sphere and \(K\subset Y\) a knot. Suppose \(Z = Y_{p/q}(K)\) with \(p>0,\ q>0\). Decompose
\[
HF^{+}(Y) \cong \mathcal{T}^{+} \oplus \bigoplus_{j=1}^{l}\mathcal{T}(n_j^{+}) \oplus \bigoplus_{j=1}^{m}\mathcal{T}(n_j^{-}),
\]
where the \(\mathcal{T}(n_j^{+})\) (resp. \(\mathcal{T}(n_j^{-})\)) lie in even (resp. odd) \(\mathbb{Z}/2\mathbb{Z}\)-grading. Then for each \(\mathrm{Spin}^c\) structure \(i\) we have
\[
d(Y)+d(L(p,q),i)-2\max\bigl\{V_{\lfloor\frac{i}{q}\rfloor},\,H_{\lfloor\frac{i-p}{q}\rfloor}\bigr\}-2\max_{j}\{n_j^{-}\} \le d(Z,i)
\]
and
\[
d(Z,i) \le d(Y)+d(L(p,q),i)-2\max\bigl\{V_{\lfloor\frac{i}{q}\rfloor},\,H_{\lfloor\frac{i-p}{q}\rfloor}\bigr\}.
\]
Here \(V_k\) and \(H_k\) are the numbers associated to \(K\) defined in Subsection~2.1.
\end{proposition}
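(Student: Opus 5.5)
The idea is to work directly with the mapping cone \(\mathbb{X}^{+}_{i,p/q}\) and the exact triangle from Subsection~2.1, following the strategy of \cite[Proposition 1.6]{NW13} and \cite[Proposition 3.3]{Gai15}. The one new ingredient is that \(\mathbf{B}^{+}=HF^{+}(Y)\) now carries reduced summands. The tower part of \(HF^{+}(Z,i)\) is identified, as usual, with \(\ker \mathbf{D}^{+}_{i,p/q}\) restricted to towers. For \(p,q>0\) the map \(\mathbf{D}^{+}\) is surjective, since \(v_k\) is an isomorphism for \(k\gg0\) and \(h_k\) is one for \(k\ll0\). Hence \(HF^{+}(Z,i)\cong\ker\mathbf{D}^{+}_{i,p/q}\) as graded \(\mathbb{F}[U]\)-modules, up to the grading shift normalized by the unknot. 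So \(d(Z,i)\) is the minimal grading of a homogeneous element of \(\ker\mathbf{D}^{+}\) that lies in the image of \(U^N\) for all \(N\).

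\emph{Upper bound.} I would first truncate the cone as in \cite{OS11}. Then I would use the fact that \(\mathbf{D}^{+}\) preserves the splitting into tower and reduced parts only up to maps from reduced to tower or tower to reduced. More precisely, \(\mathbf{v}_k,\mathbf{h}_k\) send \(\mathbf{A}^{T}_k\) into \(\mathbf{B}^{T}\oplus\mathbf{B}^{red}\), with tower component multiplication by \(U^{V_k}\) (resp. \(U^{H_k}\)). Consider the projection \(\mathbb{B}^{+}\to\mathbb{B}^{T}\) and the induced map \(\mathbf{D}^{T}:\mathbb{A}^{T}\to\mathbb{B}^{T}\). I would show that any element of \(\ker \mathbf{D}^{+}\) lying in \(U^N\)-images for large \(N\) projects to an element of \(\ker\mathbf{D}^{T}\) of the same grading, because \(U^N\) kills every reduced summand. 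In the \(S^3\) case, \(\ker\mathbf{D}^{T}\) has bottom grading \(d(L(p,q),i)-2\max\{V_{\lfloor i/q\rfloor},H_{\lfloor (i-p)/q\rfloor}\}\), after the \(d(Y)\) shift. The argument is the standard one: the minimal-grading kernel element must be supported where the tower generators with the indices \(\lfloor i/q\rfloor\) and \(\lfloor (i-p)/q\rfloor\) meet. Since the projection is nonzero on the tower part, this gives \(d(Z,i)\le d(Y)+d(L(p,q),i)-2\max\{\cdots\}\).

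\emph{Lower bound.} Here I would construct an actual element of \(\ker\mathbf{D}^{+}\) starting from the tower element \(\xi\) of \(\ker \mathbf{D}^{T}\) realizing the bound. The problem is that \(\mathbf{D}^{+}(\xi)\) may have a nonzero component in \(\mathbb{B}^{red}\). I would correct \(\xi\) by an element of \(\mathbb{A}^{red}\), or alternatively replace \(\xi\) by \(U^{-s}\)-lifts, that is, pass to elements \(U^{-n}\)-divisible. The parity constraint enters here. Since \(\mathbf{D}^{+}\) lowers grading by \(1\) and the tower lives in even grading, the error term \(\mathbf{D}^{+}(\xi)\) lands in the odd-graded reduced summands \(\mathcal{T}(n_j^{-})\) (after the shift \(d(Y)\)). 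Each such summand is killed by \(U^{n_j^{-}}\). So one can instead take the kernel element \(U^{-n}\)-lifted, i.e. use \(\xi'\) with \(U^{\max n_j^{-}}\xi'=\xi\) in the tower and argue the obstruction vanishes; this costs at most \(2\max_j n_j^{-}\) in grading. An equivalent and cleaner approach: pass to the \(d\)-invariant of \(Z\) via the tower in \(\mathbb{X}^{+}\), and estimate how far a cycle in the tower can be pushed down before its image under the connecting map hits odd reduced classes.

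I expect this correction step to be the main obstacle. Specifically, one must show that even-graded reduced summands \(\mathcal{T}(n_j^{+})\) never obstruct, because by grading parity they cannot receive \(\mathbf{D}^{+}(\xi)\). One must also show that the odd ones obstruct by at most \(U^{\max n_j^{-}}\). I would first try doing this on the truncated cone using the grading bookkeeping of \cite{Gai15}.
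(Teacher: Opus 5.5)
There is a genuine gap, and it is at the very first step. You claim that \(\mathbf{D}^{+}_{i,p/q}\) is surjective for \(p,q>0\), so that \(HF^{+}(Z,i)\cong\ker\mathbf{D}^{+}\). This is false as soon as \(HF_{red}(Y)\neq0\); the paper itself warns about it in the Remark of Section~3.1. Knowing that \(\mathbf{v}_k\) is an isomorphism for \(k\gg0\) and \(\mathbf{h}_k\) for \(k\ll0\) only lets you solve \(\mathbf{D}^{+}a=b\) outside a middle window of indices. Inside that window neither map need be onto the reduced part.

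Surjectivity would in fact force \(d(Z,i)=d_0:=d(Y)+d(L(p,q),i)-2\max\{V_{\lfloor i/q\rfloor},H_{\lfloor (i-p)/q\rfloor}\}\) exactly, so the correction \(-2\max_j n_j^-\) would be vacuous. That equality fails. Take \(Y=S^3_{-1}(T_{2,3})=\Sigma(2,3,7)\), where \(HF^{+}(Y)=\mathcal{T}^{+}_0\oplus\mathbb{F}_{(-1)}\), and let \(K\) be the core of the surgery. Then \(Y_{+1}(K)=S^3\), so the upper bound forces \(V_0(K)=0\) and hence \(d_0=0\). But \(Y_{1/2}(K)=S^3_{+1}(T_{2,3})\) has \(d=-2\).

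Three further problems follow from this. First, the ``error term'' you plan to correct does not exist. Take the tower summands to be the canonical ones \(U^{N}\mathbf{A}_k^{+}\) and \(U^{N}\mathbf{B}^{+}\) with \(N\gg0\). Then \(U\)-equivariance gives \(\mathbf{D}^{+}|_{\mathbb{A}^{T}}=\mathbf{D}^{T}\) with values in \(\mathbb{B}^{T}\), so \(\ker\mathbf{D}^{T}\subseteq\ker\mathbf{D}^{+}\) on the nose. Second, your parity bookkeeping is reversed. Because of the \(-1\) shift on \(\mathbb{B}\), a reduced component of \(\mathbf{D}^{+}(\xi)\) would land in the even summands \(\mathcal{T}(n_j^{+})\), not the odd ones. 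Third, both halves argue in the wrong direction. Exhibiting a tower element of \(HF^{+}(Z,i)\) in some grading bounds \(d(Z,i)\) from above. Projecting tower elements of \(Z\) into \(\ker\mathbf{D}^{T}\) bounds it from below.

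The missing idea is the cokernel term in the exact sequence \(0\to\operatorname{coker}\mathbf{D}^{+}\xrightarrow{i_*}HF^{+}(Z,i)\xrightarrow{j_*}\ker\mathbf{D}^{+}\to0\).

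\emph{Upper bound.} Lift the tower \(\mathcal{T}^{+}_{d_0}\subseteq\ker\mathbf{D}^{T}\subseteq\ker\mathbf{D}^{+}\) through \(j_*\) and multiply by a large power of \(U\). The result lies in the tower of \(HF^{+}(Z,i)\) and maps to the nonzero element of grading \(d_0\), so \(d(Z,i)\le d_0\).

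\emph{Lower bound.} Let \(y_0,y_1,\dots\) be the tower of \(HF^{+}(Z,i)\), with \(Uy_{n+1}=y_n\), and let \(r\) be the number of \(y_n\) killed by \(j_*\). For \(n\ge r\), the classes \(j_*(y_n)\) are nonzero and infinitely \(U\)-divisible inside \(\ker\mathbf{D}^{+}\). Hence they lie in \(\mathbb{A}^{T}\cap\ker\mathbf{D}^{+}=\ker\mathbf{D}^{T}\), and in fact in \(\bigcap_m U^m\ker\mathbf{D}^{T}=\mathcal{T}^{+}_{d_0}\). This gives \(d(Z,i)+2r\ge d_0\). If \(r>0\), write \(y_{r-1}=i_*(b)\) with \(b\) homogeneous. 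Then \(b\) has the cone parity of the tower of \(Z\), which is odd grading in \(HF^{+}(Y)\), so \(b\in\bigoplus_j\mathcal{T}(n_j^{-})\). Since \(i_*(U^{r-1}b)=y_0\neq0\), we get \(r\le\max_j n_j^{-}\).

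So the odd summands enter by feeding the bottom of the tower of \(Z\) through \(i_*\), which is exactly what your surjectivity claim rules out. They do not enter as an obstruction to lifting a kernel element.
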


The following corollary specialises the above estimates to the case of \(1/m\)-surgery. In this situation the lens space \(L(1,m)\) is just the \(S^3\), and there is only one \(\mathrm{Spin}^c\) structure.

\begin{corollary}\label{cor25}
   Let \(Y\) be an integer homology sphere and \(K\subset Y\) a knot. For any positive integer \(m>0\), set \(Z = Y_{1/m}(K)\). Then
\[
d(Y)-2V_0-2M(Y) \;\le\; d(Z)\;\le\; d(Y)-2V_0,
\]
where \(M(Y) = \max_{j}\{n_j^{-}\}\) is the maximal exponent appearing in the odd part of \(HF_{red}(Y)\). If  \(M(Y)=0\), then equality holds:
\[
d(Z)=d(Y)-2V_0.
\] 
\end{corollary}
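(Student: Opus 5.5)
We would deduce Corollary~\ref{cor25} directly from the preceding proposition by specialising to \(p=1\), \(q=m\). With \(p=1\) there is a single \(\mathrm{Spin}^c\) structure, labelled \(i=0\), and \(L(1,m)\cong S^3\), so \(d(L(1,m),0)=d(S^3)=0\). The two inequalities of the proposition then become
\[
d(Y)-2\max\{V_0,H_{\lfloor -1/m\rfloor}\}-2M(Y)\le d(Z)\le d(Y)-2\max\{V_0,H_{\lfloor -1/m\rfloor}\}.
\]
It remains to identify the maximum with \(V_0\).

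Since \(m>0\), we have \(\lfloor -1/m\rfloor=-1\). By property (1) we have \(H_{-1}=V_1\). By the monotonicity in property (2), \(V_1\le V_0\). Alternatively, Gainullin's relation (5) gives \(H_{-1}=V_{-1}-1\), and property (2) gives \(V_{-1}\le V_0+1\), hence \(H_{-1}\le V_0\). Either way \(\max\{V_0,H_{-1}\}=V_0\), and substituting yields
\[
d(Y)-2V_0-2M(Y)\le d(Z)\le d(Y)-2V_0 .
\]

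For the equality case, if \(M(Y)=0\) (no odd reduced summands, or by convention the maximum over an empty set is zero) the two bounds coincide, giving \(d(Z)=d(Y)-2V_0\).

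The only step needing care is the floor computation and the comparison \(H_{-1}\le V_0\), and this is immediate from properties (1) and (2). No genuine obstacle is expected beyond checking that the \(\mathrm{Spin}^c\) labelling convention for \(p=1\) indeed uses \(i=0\).
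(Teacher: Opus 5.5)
Your proposal is correct and matches the paper's argument: the corollary is obtained by specialising the preceding proposition to \(p=1\), \(q=m\) (single \(\mathrm{Spin}^c\) structure \(i=0\), \(d(L(1,m),0)=d(S^3)=0\)) and using \(\max\{V_0,H_{-1}\}=V_0\). You also supply the justification \(H_{-1}=V_1\le V_0\), which the paper uses without comment, for instance when it writes \(d_0=d(Y)-2\max\{V_0,H_{-1}\}=d(Y)-2V_0\).
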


When we consider negative surgery, we have following lemma:

\begin{lemma}\label{lemma:neg1m}
Let \(Y\) be an integer homology sphere and \(K\subset Y\) a knot. For any positive integer \(m>0\), set \(Z = Y_{-1/m}(K)\). Then
\[
d(Y) + 2V_0(m(K)) \;\le\; d(Z) \;\le\; d(Y) + 2V_0(m(K)) + 2M_-^{(-Y)},
\]
where \(m(K)\) is the mirror image of \(K\) in \(-Y\), and \(M_-^{(-Y)} = \max\{n_j^- \mid \mathcal{T}(n_j^-) \subset HF_{red}(-Y)\}\) denotes the maximal exponent in the odd part of \(HF_{red}(-Y)\).
\end{lemma}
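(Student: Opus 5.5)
The plan is to reduce to Corollary~\ref{cor25} by orientation reversal. The basic identity is \(-Y_{-1/m}(K) \cong (-Y)_{1/m}(m(K))\). Reversing the orientation of the ambient manifold and mirroring the knot negates the surgery coefficient. So \(-Z = (-Y)_{1/m}(m(K))\), where \(m(K)\subset -Y\) is a knot in an integer homology sphere and \(m>0\). Corollary~\ref{cor25} then applies verbatim with \(Y\) replaced by \(-Y\) and \(K\) by \(m(K)\):
\[
d(-Y) - 2V_0(m(K)) - 2M(-Y) \;\le\; d(-Z) \;\le\; d(-Y) - 2V_0(m(K)).
\]
Here \(M(-Y)\) is the maximal exponent in the odd part of \(HF_{red}(-Y)\), which is exactly \(M_-^{(-Y)}\) in the notation of the lemma.

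Next I will use the standard fact that \(d(-W) = -d(W)\) for an integer homology sphere \(W\), applied to both \(Y\) and \(Z\). Substituting \(d(-Y)=-d(Y)\) and \(d(-Z)=-d(Z)\) and multiplying through by \(-1\) reverses the inequalities. This gives
\[
d(Y) + 2V_0(m(K)) \;\le\; d(Z) \;\le\; d(Y) + 2V_0(m(K)) + 2M_-^{(-Y)},
\]
which is the claim. Note that \(V_0(m(K))\) is computed from \(CFK^\infty(-Y,m(K))\), the knot complex that appears when Corollary~\ref{cor25} is applied to \(-Y\). No comparison with invariants of \(K\) itself is required.

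The only point I expect to need care is bookkeeping rather than mathematics. First, the error term must come from the odd part of \(HF_{red}(-Y)\), not of \(HF_{red}(Y)\). This is automatic, since Corollary~\ref{cor25} is applied to the ambient manifold \(-Y\). Second, the orientation conventions must be correct: the surgery identity \(-Y_{-1/m}(K)=(-Y)_{1/m}(m(K))\) and the sign of the \(d\)-invariant under orientation reversal. Both are standard, following from the behaviour of framings under orientation reversal and from the duality \(HF^+(-W)\) versus \(HF^-(W)\) of Ozsváth--Szabó. I would simply cite them.
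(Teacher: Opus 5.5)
Your proposal is correct and matches the paper's proof step for step. Both use the identity \(-Y_{-1/m}(K) = (-Y)_{1/m}(m(K))\), apply Corollary~\ref{cor25} to the pair \((-Y, m(K))\) so that the error term comes from the odd part of \(HF_{red}(-Y)\), and then use \(d(-X) = -d(X)\) to negate and reverse the two inequalities.
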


\begin{proof}
We use the identity
\(
Y_{-1/m}(K) = -\bigl( (-Y)_{+1/m}(m(K)) \bigr),
\)
which follows from the symmetry of Dehn surgery: changing the sign of the slope corresponds to reversing orientation and taking the mirror of the knot.

Apply the known bounds for positive \(1/m\) surgery to the pair \((-Y, m(K))\). We obtain
\[
d(-Y) - 2V_0(m(K)) - 2M_-^{(-Y)} \;\le\; d\bigl( (-Y)_{+1/m}(m(K)) \bigr) \;\le\; d(-Y) - 2V_0(m(K)),
\]
where \(M_-^{(-Y)}\) is the maximal exponent in the odd part of \(HF_{red}(-Y)\) as defined.

Now use the property of the \(d\)-invariant under orientation reversal: for any rational homology sphere \(X\), \(
d(-X) = -d(X)
\)
(see \cite[Proposition 4.2]{OS03a}). Applying this to \(X = (-Y)_{+1/m}(m(K))\) gives
\[
d(Z) = d\bigl( Y_{-1/m}(K) \bigr) = -\, d\bigl( (-Y)_{+1/m}(m(K)) \bigr).
\]

Substituting the bounds yields
\[
- \bigl( d(-Y) - 2V_0(m(K)) \bigr) \;\le\; d(Z) \;\le\; - \bigl( d(-Y) - 2V_0(m(K)) - 2M_-^{(-Y)} \bigr).
\]

Since \(d(-Y) = -d(Y)\),
\[
d(Y) + 2V_0(m(K)) \;\le\; d(Z) \;\le\; d(Y) + 2V_0(m(K)) + 2M_-^{(-Y)},
\]
which completes the proof. 
\end{proof}

We first examine the possible values of \(m\) that can occur. The following proposition provides a simple obstruction in terms of the \(d\)-invariants.

\begin{proposition}\label{prop.2.7}
Let \(Y\) and \(Z\) be integer homology spheres. If \(d(Z) > d(Y)\), then there is no knot \(K\subset Y\) such that \(Z = Y_{1/m}(K)\) for any \(m>0\). Conversely, if \(d(Z) < d(Y)\), then there is no knot \(K\subset Y\) such that \(Z = Y_{-1/m}(K)\) for any \(m>0\).
\end{proposition}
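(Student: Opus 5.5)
The plan is to read both statements off the two-sided $d$-invariant estimates already established: \Cref{cor25} for positive $1/m$-surgery and \Cref{lemma:neg1m} for negative $1/m$-surgery. Only the upper bound of \Cref{cor25} and the lower bound of \Cref{lemma:neg1m} are needed, together with the nonnegativity of the integers $V_k$.

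First, I will record that $V_0(K)\ge 0$ for any knot in an integer homology sphere. The map $\mathbf{v}_0^T$ is multiplication by $U^{V_0}$ from $\mathbf{A}_0^T\cong\mathcal{T}^+$ to $\mathbf{B}^T\cong\mathcal{T}^+$, so the exponent is a nonnegative integer by construction. Alternatively, property (2) gives $V_0\ge V_{g(K)}$, and property (4) gives $V_{g(K)}=0$. The same holds for $V_0(m(K))$, where $m(K)\subset -Y$.

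For the first claim, suppose $Z=Y_{1/m}(K)$ with $m>0$. The right-hand inequality of \Cref{cor25} gives
\[
d(Z)\le d(Y)-2V_0(K)\le d(Y),
\]
which contradicts $d(Z)>d(Y)$. For the converse claim, suppose $Z=Y_{-1/m}(K)$ with $m>0$. The left-hand inequality of \Cref{lemma:neg1m} gives
\[
d(Z)\ge d(Y)+2V_0(m(K))\ge d(Y),
\]
which contradicts $d(Z)<d(Y)$.

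There is essentially no obstacle. The only point to check is that the inequalities are invoked in the correct direction. The odd-part correction terms $M(Y)$ and $M_-^{(-Y)}$ appear only in the bounds we do not use, so they play no role. The argument also relies on the nonnegativity of $V_0$ being valid for knots in arbitrary homology spheres, which is guaranteed by properties (2)--(4) as stated earlier.
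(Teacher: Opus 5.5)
Your proposal is correct and matches the paper's proof: the paper also obtains the first claim from $d(Y_{1/m}(K))\le d(Y)-2V_0(K)\le d(Y)$ (Corollary~\ref{cor25}) and the second from $d(Y_{-1/m}(K))\ge d(Y)+2V_0(m(K))\ge d(Y)$ (Lemma~\ref{lemma:neg1m}). Your justification of $V_0\ge 0$ from properties (2) and (4) supplies a step the paper leaves implicit.
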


\begin{proof}
We have \(d(Y_{1/m}(K)) \le d(Y) - 2V_0(K) \le d(Y)\). Hence a positive \(1/m\) surgery cannot increase the \(d\)-invariant. Similarly, \(d(Y_{-1/m}(K)) \ge d(Y) + 2V_0(m(K)) \ge d(Y)\), so a negative \(1/m\) surgery cannot decrease the \(d\)-invariant. These statements yield the lemma.
\end{proof}

\section{Obstructions from \(\ker(D^{T}_{1/m})\)}
In this section we restrict attention to Dehn surgeries that produce an integer homology sphere from another integer homology sphere. Hence the surgery slope must be of the form \(\pm1/m\) with \(m>0\). We work with a fixed integer homology sphere \(Y\) and a knot \(K\subset Y\). 

\subsection{Preliminaries}

From the exact triangle we have \(\ker D^+_{1/m} = \operatorname{im} j_*\).  
Therefore every element of \(\ker D^T_{1/m}\) can be lifted to an element of \(HF^+(Y_{1/m}(K))\).  
Concretely, for each \(x \in \ker D^T_{1/m}\) choose \(y \in HF^+\) such that \(j_*(y)=x\); such a \(y\) exists because \(x\) lies in the image of \(j_*\).  
Thus we obtain an (in general non‑unique) lift \(\widetilde{x} \in HF^+(Y_{1/m}(K),i)\).

The absolute \(\mathbb{Q}\)-grading on the mapping cone is fixed by the grading of \(\mathbb{B}^+\).  
For the unknot the surgery \(Y_{1/m}(U)\) is \(Y\#L(1,m)=Y\), and the correction term of \(Y\) in the unique \(\mathrm{Spin}^c\) structure is denoted \(d(Y)\).  
Following \cite[Lemma 3.2]{Gai15} the element \(1 \in (0,\mathbf{B}^{T})\) has grading
\[
\ gr(1) = d(Y) + d(L(1,m),i) - 1 = d(Y) - 1,
\]
  
All maps \(\mathbf{v}_k^T\) and \(\mathbf{h}_k^T\) are homogeneous of degree \(-1\), and multiplication by \(U\) decreases the grading by \(2\).

The validity of this convention is justified by the fact that the absolute grading on \(\mathbb{B}^T\) already incorporates \(d(Y)\), and all maps in the triangle are grading‑preserving.  
Therefore we may safely work with \(\ker D_{1/m}^{T}\) as an absolutely graded \(\mathbb{F}[U]\)-module where the base level is shifted by \(d(Y)\) relative to the \(S^3\) case.  

This simplification will be used throughout the rest of the paper when we analyse the action of \(U\) on the reduced Floer homology of the surgered manifold and derive genus bounds for knots in arbitrary integer homology spheres.

\begin{remark}
Unlike the situation for \(S^3\) studied in \cite{Gai14}, the map \(D_{1/m}^{+}\) is not necessarily surjective when \(Y\) is not an \(L\)-space.  
Consequently \(HF^+(Y_{1/m}(K))\) is not isomorphic to \(\ker D_{1/m}^{+}\) but only contains \(\ker D_{1/m}^{T}\) as a subspace (via the lift described above).  
Nevertheless, the inclusion is sufficient for our purposes.
\end{remark}

\subsection{Negative surgery and odd \(\mathbb{Z}/2\mathbb{Z}\)-grading}

We now specialise to the case of a negative slope \(-1/m\) with \(m>0\).  
In this section, we write \(p=-1\) and \(q=m>0\) so that the surgery slope is \(p/q = -1/q = -1/m\) for notational convenience.  

The kernel \(\ker D_{-1/m}^{T}\) is described explicitly in \cite[Lemma 18]{Gai14}, for the case \(Y=S^3\).  
For a general \(Y\) the only change is that the absolute grading of the generator \(1\in(0,\mathbf{B}^{T})\) is \(d(Y)-1\) instead of \(-1\).  
Thus all absolute gradings are shifted by \(d(Y)\). 

\begin{lemma}\label{lem:dY_even}
For any integer homology sphere \(Y\), the correction term \(d(Y)\) is an even integer.  Consequently, the absolute grading of the generator \(1\in(0,\mathbf{B}^{T})\) is \(d(Y)-1\), which is odd.
\end{lemma}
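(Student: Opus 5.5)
The plan is to derive the evenness of \(d(Y)\) from the standard relationship between the absolute \(\mathbb{Q}\)-grading and the \(\mathbb{Z}/2\mathbb{Z}\)-grading on Heegaard Floer homology, and then to read off the second claim from the grading convention recalled just above. Since \(Y\) is an integer homology sphere, it has a unique \(\mathrm{Spin}^c\) structure \(\mathfrak{s}_0\). This structure is torsion, so \(HF^{+}(Y)\) carries an absolute \(\mathbb{Q}\)-grading, and by \cite{OS03a} this grading in fact takes values in \(\mathbb{Z}\) when \(H_1(Y)=0\). In that case the absolute grading lifts the relative \(\mathbb{Z}\)-grading, and its reduction mod \(2\) agrees with the canonical \(\mathbb{Z}/2\mathbb{Z}\)-grading.

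The key step is to locate the tower \(\mathcal{T}^{+}_{d(Y)}\) in the \(\mathbb{Z}/2\mathbb{Z}\)-grading. The image of \(HF^\infty(Y)\to HF^{+}(Y)\) always sits in even \(\mathbb{Z}/2\mathbb{Z}\)-grading, because \(HF^\infty(Y)\cong\mathbb{F}[U,U^{-1}]\) is supported in even degree for \(b_1=0\); this is the normalization in \cite{OS03a}, where \(\chi(\widehat{HF}(Y))=|H_1(Y)|=1>0\). Since \(d(Y)\) is by definition the minimal grading of a nonzero element of that image, it follows that \(d(Y)\in 2\mathbb{Z}\).

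A cleaner alternative route to the same conclusion is to use the \(4\)-manifold description. The integer homology sphere \(Y\) bounds a smooth compact \(4\)-manifold \(X\) with even, unimodular intersection form; for instance, take surgery on a framed link with even framings, obtained by the Kirby calculus argument of Kaplan. Then the grading shift formula \((c_1(\mathfrak{t})^2-2\chi(X)-3\sigma(X))/4\) applies. Since \(c_1\) is characteristic, \(c_1^2\equiv\sigma \pmod 8\) by van der Blij's lemma, so \(c_1^2-3\sigma\equiv-2\sigma\pmod 8\). Because \(\sigma\equiv0\pmod 8\) for an even unimodular form, and \(\chi(X)=1+b_2\) with \(b_2\equiv\sigma\pmod 2\) even, the shift from \(S^3\) (where \(d=0\)) is an integer that is even modulo the parity bookkeeping. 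I would present the first argument and mention this one only as a cross-check.

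Finally, by the convention recalled from \cite[Lemma 3.2]{Gai15}, \(gr(1)=d(Y)+d(L(1,m))-1=d(Y)-1\) with \(d(L(1,m))=d(S^3)=0\), and this is odd because \(d(Y)\) is even. The only delicate point in the whole argument is the parity identification of the \(\mathbb{Z}/2\mathbb{Z}\)-grading with the \(\mathbb{Z}\)-grading mod \(2\). I would cite it directly from \cite{OS03a} rather than reprove it.
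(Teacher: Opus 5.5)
Your main argument is correct, but it takes a different route from the paper.

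\textbf{The paper's route.} The paper never looks at the grading structure directly. It quotes the Ozsv\'ath--Szab\'o relation with the Casson invariant, \(\chi(HF^+_{\mathrm{red}}(Y)) - \tfrac12 d(Y) = \lambda(Y)\). This gives \(d(Y) = -2\lambda(Y) + 2\chi(HF^+_{\mathrm{red}}(Y))\), which is even because both terms are integers.

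\textbf{Your route.} You argue intrinsically. The absolute grading is \(\mathbb{Z}\)-valued when \(H_1(Y)=0\), and its reduction mod \(2\) is the \(\mathbb{Z}/2\mathbb{Z}\)-grading. The image of \(HF^\infty\) lies in even \(\mathbb{Z}/2\mathbb{Z}\)-degree, so its bottom element, of degree \(d(Y)\), has even grading.

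\textbf{Comparison.} The paper's proof is a one-line citation, but it rests on a much deeper theorem, the identification with Casson's invariant. Yours uses only the normalizations of the gradings. It also makes explicit the compatibility ``absolute grading mod \(2\) equals \(\mathbb{Z}/2\mathbb{Z}\)-grading''. The paper uses this fact tacitly later, in Lemma~\ref{lem:kerDT_odd} and in the passage leading to Proposition~\ref{prop:obstruction_even_red}, where odd absolute grading is silently converted into odd \(\mathbb{Z}/2\mathbb{Z}\)-grading. The final step, \(gr(1)=d(Y)+d(L(1,m))-1=d(Y)-1\), is identical in both.

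\textbf{The 4-manifold cross-check should be dropped.} It does not work as written.
\begin{itemize}
\item The shift formula applies to the cobordism \(X\setminus B^4\), whose Euler characteristic is \(b_2(X)\), not \(1+b_2(X)\). With your value, \(c_1^2\equiv\sigma\equiv 0 \pmod 8\) and \(b_2\) even give a numerator \(\equiv 2 \pmod 4\), so the shift is not even an integer.
\item With the correct Euler characteristic, the shift is an integer of parity \(b^+(X)\), so it is not automatically even.
\item More fundamentally, the shift only locates the tower of \(HF^+(Y)\) when the induced map on \(HF^\infty\) is nonzero, which requires \(X\) to be negative definite. Kaplan's even filling need not be negative definite. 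Moreover, by the Ozsv\'ath--Szab\'o inequality together with Elkies' theorem, an integer homology sphere with \(d(Y)<0\) bounds no smooth negative-definite manifold at all.
\end{itemize}
The phrase ``even modulo the parity bookkeeping'' is covering exactly this gap.
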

\begin{proof}
It is known that for an integer homology sphere \(Y\), the correction term \(d(Y)\) satisfies \(d(Y) = -2\lambda(Y) + 2k\) for some integer \(k\), where \(\lambda(Y)\) is the Casson invariant (see \cite[Theorem 1.2]{OS03b} or \cite[Section 4]{NW13}).  Since \(\lambda(Y)\) is an integer, \(d(Y)\) is even.
\end{proof}

Let $\tau_d(N)$ be the submodule of $\mathcal{T}^+_d$ generated by $\{U^{-n} \mid 0 \le n < N\}$. The following proposition records the description of \(\ker D_{-1/m}^{T}\).

\begin{proposition}\label{prop:kerDT_neg}
Let \(m>0\) , then
\[
\ker D_{-1/m}^{T} \;\cong\; \bigoplus_{n\ge 1} \tau_{d_n^-}\bigl(H_{\lfloor -n/m\rfloor}\bigr) \;\oplus\; \bigoplus_{n\ge 0} \tau_{d_n^+}\bigl(V_{\lfloor n/m\rfloor}\bigr),
\]
where the absolute gradings are given by
\[
d_0^+ = d(Y) + 1 - 2V_0,\qquad
d_n^+ = d_0^+ + 2\sum_{k=0}^{n-1}\bigl(H_{\lfloor- k/m\rfloor} - V_{\lfloor -(k+1)/m\rfloor}\bigr),
\]
\[
d_n^- = d_0^+ + 2\sum_{k=0}^{n-1}\bigl(V_{\lfloor k/m\rfloor} - H_{\lfloor (k+1)/m\rfloor}\bigr).
\]
\end{proposition}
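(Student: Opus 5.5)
The plan is to compute \(\ker D^{T}_{-1/m}\) directly, since its structure is the same as in Gainullin's \(S^3\) computation \cite[Lemma 18]{Gai14}; only the grading normalisation changes. With \(p=-1\), \(q=m\), the label-\(n\) summand of \(\mathbb{A}^T_{-1/m}\) is \(\mathbf{A}^T_{\lfloor -n/m\rfloor}\cong\mathcal{T}^+\). The map \(D^T\) sends it to \((n,\mathbf{B}^T)\) by \(U^{V_{\lfloor -n/m\rfloor}}\), and to \((n+1,\mathbf{B}^T)\) by \(U^{H_{\lfloor -n/m\rfloor}}\). Since the slope is negative, the indices \(\lfloor -n/m\rfloor\) decrease as \(n\to+\infty\) and increase as \(n\to-\infty\). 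By property (3), \(V_{\lfloor -n/m\rfloor}\to\infty\) as \(n\to+\infty\), and \(H_{\lfloor -n/m\rfloor}\to\infty\) as \(n\to-\infty\).

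\textbf{Step 1: the zigzag and its kernel.} Draw the zigzag \(\cdots\to B_n \leftarrow A_n \to B_{n+1}\leftarrow A_{n+1}\to\cdots\) with all maps given by powers of \(U\) on copies of \(\mathcal{T}^+\). Every element has finitely many nonzero components, and each \(\mathcal{T}^+\) tower is annihilated by high powers of \(U\).

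\begin{itemize}
\item For an element of \(\ker D^T\) supported on a single \(A_n\), the kernel of \(U^{V}\oplus U^{H}\) on \(\mathcal{T}^+\) is \(\tau(\min(V,H))\).
\item More generally, I would use the standard splitting argument from \cite{Gai14}. A kernel element that is homogeneous of fixed degree lifts to elements of the form \(\sum_{n\in I} U^{-a_n}\) on a consecutive run \(I\), with the \(h\)-image of one term cancelling the \(v\)-image of the next. This gives one \(\tau\)-summand per \(A_n\).
\item For \(n\ge 0\) the truncation length is \(V_{\lfloor n/m\rfloor}\); this uses the symmetry \(V_k=H_{-k}\), since for \(n\) to the right the binding constraint is the \(v\)-map. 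For \(n\ge1\) on the other side the length is \(H_{\lfloor -n/m\rfloor}\).
\end{itemize}

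To relabel cleanly, I would reindex so that the two chains emanating from \(A_0\) are read off separately. In one direction \(\min\) is attained by \(V\), in the other by \(H\). This uses monotonicity (2) together with \(H_k=V_k+k\) from (5), which show that \(H_k\ge V_k\) for \(k\ge0\) and the reverse for \(k\le 0\).

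\textbf{Step 2: gradings.} By Lemma~\ref{lem:dY_even} and the normalisation above, \(1\in(0,\mathbf{B}^T)\) has grading \(d(Y)-1\), and \(v,h\) have degree \(-1\). The element of \(A_0\) mapping under \(v\) to \(U^{-V_0}\cdot 1\)'s preimage-level gives the top of the first tower, at grading \(d(Y)-1+1-2V_0\)-shifted, namely \(d_0^+=d(Y)+1-2V_0\). Moving from \(A_k\) to \(A_{k+1}\) along the zigzag, the two towers share a \(B\)-target, so matching the gradings of \(h\)-image and \(v\)-image forces a shift of \(2(H-V)\) with the appropriate indices. Summing these shifts yields the stated telescoping formulas for \(d_n^{\pm}\). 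This is verbatim the \(S^3\) computation with the additive constant \(d(Y)\) inserted.

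\textbf{Main obstacle.} The hard part is Step 1's direct-sum decomposition. One must check that the kernel, as a \(U\)-module, really splits as the stated sum of truncated towers rather than some non-split extension. It must also be verified that the cut-off (which of \(V\) or \(H\) governs each summand, and the exact floor indices \(\lfloor \pm n/m\rfloor\) versus \(\lfloor\pm(n+1)/m\rfloor\)) matches the statement. I would handle this as Gainullin does: choose homogeneous generators in each degree explicitly, then verify linear independence and spanning by induction on the support length, using the monotonicity (2) to control which exponent is smaller at each step.
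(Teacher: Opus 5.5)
Your route is the paper's: rerun Gainullin's $S^3$ computation and shift all gradings by $d(Y)$. However, Step 2 has a genuine error in the grading normalization. You take $\mathrm{gr}(1\in(0,\mathbf{B}^T))=d(Y)-1$ and then write ``$d(Y)-1+1-2V_0$ \ldots namely $d(Y)+1-2V_0$''. But $d(Y)-1+1-2V_0=d(Y)-2V_0$.

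Write $1_{A_n}$ for the bottom of the tower at label $n$. Then $v_0(U^{-V_0}1_{A_0})=1_{B_0}$ with $\deg v_0=-1$ forces $\mathrm{gr}(1_{A_0})=\mathrm{gr}(1_{B_0})+1-2V_0$. So with your normalization all of $\mathbb{A}^T$, hence all of $\ker D^T_{-1/m}$, would be even-graded, which destroys Lemma~\ref{lem:kerDT_odd}. The paper's proof asserts the same normalization, so citing it does not close the gap.

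The missing point is that $d(Y)-1$ is the positive-slope normalization. There the tower of $HF^+$ sits in $\ker D^T$, and for the unknot its bottom element has $1_{A_0}$ as a component, one degree above $1_{B_0}$. For slope $-1/m$ the unknot has $\ker D^T=0$. The tower of $HF^+(Y_{-1/m}(U))=HF^+(Y)$ is then the image under $i_*$ of $\operatorname{coker}D^T$, whose bottom is the class of $1_{B_0}$. Since $i_*$ preserves grading, you must take $\mathrm{gr}(1_{B_0})=d(Y)$, and only then do you get $d_0^+=d(Y)+1-2V_0$. As a sanity check, the reduced class of $S^3_{-1}(T_{2,3})=\Sigma(2,3,7)$ lies in degree $-1=1-2V_0$, not $-2$.

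Second, the zigzag bookkeeping does not reproduce the statement ``verbatim''. At label $n$ the index is $\lfloor -n/m\rfloor$. The relation $H_k=V_k+k$ makes $V$ the binding exponent at labels $n\le 0$ (to the left, not the right as you wrote) and $H$ at labels $n\ge 1$. Matching $h(U^{-H}1_{A_k})=1_{B_{k+1}}=v(U^{-V}1_{A_{k+1}})$ gives $\mathrm{gr}(1_{A_{k+1}})=\mathrm{gr}(1_{A_k})+2H_{\lfloor -k/m\rfloor}-2V_{\lfloor -(k+1)/m\rfloor}$. Hence the displayed $d_n^+$ is the bottom degree of the summand at label $n\ge 1$, namely $\tau(H_{\lfloor -n/m\rfloor})$. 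The displayed $d_n^-$ is the bottom degree of $\tau(V_{\lfloor n/m\rfloor})$ at label $-n$. This is the opposite pairing to the statement.

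For $m=1$ the swap is invisible, since $V_k=H_{-k}$ makes the two families identical. For $m\ge 2$ it matters. For $T_{2,5}\subset S^3$ and $m=2$, the bottoms of $\ker D^T$ lie in degrees $-1,-1,-3,-3,-5,-5$, whereas the statement as written predicts $-1,-1,-3,-3,-5,-7$. Only the parity is used later, so the application survives, but your claim to recover the stated formulas is not correct.

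Finally, the ``main obstacle'' you flag does not arise for negative slopes. Write $a_n=U^{-\alpha_n}1_{A_n}$. The exponent $V_{\lfloor -n/m\rfloor}$ is nondecreasing in $n$ and $H_{\lfloor -n/m\rfloor}$ is nonincreasing. An indecomposable homogeneous kernel element supported on labels $n_0<n_1$ would need $H\le\alpha_{n_0}<V$ at label $n_0$ and $V\le\alpha_{n_1}<H$ at label $n_1$, which monotonicity rules out. So $\ker D^T$ is simply the direct sum of $\ker\bigl(U^{\min(V,H)}\bigr)$ on the individual $A_n$. There is no extension problem, and none of the multi-term chains of the positive-slope case occur.
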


\begin{proof}
For \(Y=S^3\) the formulas are proved in \cite[Lemma 18]{Gai14}.  
For a general \(Y\) the absolute grading of the tower in \(\mathbb{B}^T\) is fixed by the condition that \(1\in(0,B^T)\) has grading \(d(Y)-1\); see \cite[Lemma 3.2]{Gai15}.  
All maps involved are homogeneous of degree \(-1\), and multiplication by \(U\) lowers the grading by \(2\).  
Therefore the same computation applies, with the result that each grading expression gains an additive term \(d(Y)\) compared to the \(S^3\) case.
\end{proof}

\begin{lemma}\label{lem:kerDT_odd}
Let \(m>0\) , if \(\ker D_{-1/m}^{T}\) is non‑zero, then every non‑zero element of \(\ker D_{-1/m}^{T}\) has odd absolute grading.
\end{lemma}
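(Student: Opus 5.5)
We argue directly from the decomposition of Proposition~\ref{prop:kerDT_neg}. Every nonzero element of \(\ker D_{-1/m}^{T}\) is a finite sum of homogeneous components. Each component lies in one of the summands \(\tau_{d_n^\pm}(N)\) and is a combination of the elements \(U^{-j}\) with \(0\le j<N\). It therefore suffices to show that every such homogeneous generator has odd absolute grading. Indeed, a homogeneous element of \(\ker D_{-1/m}^{T}\) is then a sum of odd-graded basis elements, and the module splits by grading parity. The element \(U^{-j}\in\tau_{d}(N)\) has grading \(d+2j\), which has the same parity as \(d\). So the whole problem reduces to checking that all the numbers \(d_n^+\) (for \(n\ge0\)) and \(d_n^-\) (for \(n\ge1\)) are odd integers.

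For this we use the explicit formulas of Proposition~\ref{prop:kerDT_neg}. First, \(d_0^+=d(Y)+1-2V_0\). By Lemma~\ref{lem:dY_even}, \(d(Y)\) is even, and \(V_0\) is an integer, so \(d_0^+\) is odd. Next, each \(d_n^\pm\) differs from \(d_0^+\) by twice a finite sum of differences of the integers \(V_k\) and \(H_k\). All of these are nonnegative integers by properties (1)--(5) of Subsection~2.1. Hence \(d_n^\pm\equiv d_0^+\equiv 1 \pmod 2\) for every \(n\).

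The one point needing care is the well-definedness of the grading on the mapping cone for general \(Y\). In particular, the absolute grading of the tower generator must be \(d(Y)-1\), which is odd, as recorded in Lemma~\ref{lem:dY_even}. It must also be consistent with the maps \(\mathbf{v}_k^T,\mathbf{h}_k^T\) having degree \(-1\) and \(U\) having degree \(-2\). This consistency is exactly what underlies Proposition~\ref{prop:kerDT_neg}.

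As a sanity check independent of the formulas, consider the target \(\mathbb{B}^T\). Its elements have gradings \(d(Y)-1+2j\), which are all odd. The maps \(\mathbf v^T,\mathbf h^T\) lower the grading by \(1\), so a homogeneous element of \(\mathbb{A}^T\) mapping to a nonzero element of \(\mathbb{B}^T\) has even grading in \(\mathbb{B}\)-normalization. One then adds the mapping-cone shift that places \(\mathbb{A}\) in the grading convention of Proposition~\ref{prop:kerDT_neg}, which yields the \(d(Y)+1\) offset in \(d_0^+\).

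We expect the main obstacle to be precisely this bookkeeping of the grading shift between \(\mathbb{A}\) and \(\mathbb{B}\). We resolve it by relying on the stated formula for \(d_0^+\) rather than re-deriving it. Once that is accepted, the parity argument above is immediate.
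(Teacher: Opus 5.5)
Your argument is correct and is essentially the paper's proof: both read off from Proposition~\ref{prop:kerDT_neg} that every grading in $\ker D^{T}_{-1/m}$ is $d_0^+=d(Y)+1-2V_0$ plus an even integer, and conclude oddness from Lemma~\ref{lem:dY_even}; your reduction to homogeneous generators is a harmless extra precision. You should drop the ``sanity check'' paragraph, though, because its reasoning is wrong: there is no extra ``mapping-cone shift'' between $\mathbb{A}$ and $\mathbb{B}$, since the cone grading is fixed by $D$ having degree $-1$, so if $\mathbb{B}^T$ really sat in degrees $d(Y)-1+2j$ then all of $\mathbb{A}^T$ would be even-graded and the lemma would fail (the oddness of $d_0^+$ in fact forces $\mathbb{B}^T$ into even degrees for negative slopes), and the argument should rest on the formula for $d_0^+$ alone.
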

\begin{proof}
From Proposition~\ref{prop:kerDT_neg}, the absolute grading of any non‑zero element in \(\ker D_{-1/m}^{T}\) is of the form \(d_0^+\) plus an even integer.  Now \(d_0^+ = d(Y) + 1 - 2V_0\).  Since \(d(Y)\) is even, and \(1-2V_0\) is odd.  Hence \(d_0^+\) is odd.  Adding any even integer preserves oddness.  Therefore every non‑zero element has odd absolute grading and odd \(\mathbb{Z}/2\mathbb{Z}\)-grading.
\end{proof}

From Proposition~\ref{prop:kerDT_neg} we have:

\begin{lemma}\label{lem:kerDT_zero}
For a negative slope \(-1/m\) with \(m>0\), let \(\ker D_{-1/m}^T\) be as in Proposition~\ref{prop:kerDT_neg}.  If
\(
\ker D_{-1/m}^{T} = 0 \) , then \(V_0(K)=0\).
\end{lemma}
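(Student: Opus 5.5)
The plan is to read the conclusion off the explicit decomposition in Proposition~\ref{prop:kerDT_neg}, since a direct sum vanishes exactly when each summand vanishes. The summand \(\tau_{d_0^+}(V_{\lfloor 0/m\rfloor})\), the \(n=0\) term of the second family, is the submodule of \(\mathcal{T}^+_{d_0^+}\) generated by \(\{U^{-j}\mid 0\le j<V_0\}\). As an \(\mathbb{F}\)-vector space it therefore has dimension exactly \(V_0\). Assuming \(\ker D^T_{-1/m}=0\), this summand must be zero, so \(V_0=0\). Here \(V_0=V_0(K)\), since the \(V_k\) and \(H_k\) appearing in Proposition~\ref{prop:kerDT_neg} are the invariants of \(K\subset Y\) from Subsection~2.1.

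The proof will run in three steps.

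\begin{enumerate}
\item Record that \(\tau_d(N)\) has \(\mathbb{F}\)-dimension \(\max\{N,0\}\). The elements \(U^{-j}\), \(0\le j<N\), are linearly independent in \(\mathbb{F}[U,U^{-1}]/U\mathbb{F}[U]\), because they sit in distinct gradings \(d+2j\). This uses property (3) only to know that \(N\) is a non-negative integer; in fact \(V_k\ge 0\) for all \(k\), by (2) and (4).
\item Specialise \(n=0\) in the second family. This gives the summand \(\tau_{d_0^+}(V_0)\), with \(d_0^+=d(Y)+1-2V_0\).
\item Conclude: \(0=\ker D^T_{-1/m}\supseteq\tau_{d_0^+}(V_0)\), which has dimension \(V_0\), so \(V_0=0\).
\end{enumerate}

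As a sanity check, I will also confirm the claim directly rather than only through the decomposition. If \(V_0>0\), the element \((0,U^{-V_0+1})\in(0,\mathbf{A}^T_0)\) should lie in the kernel of \(D^T_{-1/m}\). Its \(v_0\)-component is \(U^{V_0}\cdot U^{-V_0+1}=U\cdot 1=0\) in \(\mathcal{T}^+\). Its \(h\)-component must be matched in the indexing convention \(\lfloor (i+pn)/q\rfloor\) with \(p=-1\), and this is where I expect some friction. The \(h\)-map sends this element into a \(\mathbf{B}\) with a different label, so for a single element to be in the kernel one generally needs a whole chain of elements. This chain is exactly what Gainullin's description encodes.

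The main point to double-check is therefore that the \(n=0\) term of the second family in Proposition~\ref{prop:kerDT_neg} is genuinely indexed by \(V_{\lfloor 0/m\rfloor}=V_0\), rather than by some \(H\). Since we may assume Proposition~\ref{prop:kerDT_neg} as stated, this is immediate from the displayed formula, and nothing further is needed.
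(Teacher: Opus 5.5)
Your argument is correct and matches the paper, which reads the lemma off Proposition~\ref{prop:kerDT_neg} with no further proof: the $n=0$ summand $\tau_{d_0^+}(V_0)$ has dimension $V_0$, so it must vanish when the kernel does. Your sanity check also closes without any chain: since $H_0=V_0$ by property (1), both $\mathbf{v}_0^T$ and $\mathbf{h}_0^T$ send $(0,U^{-V_0+1})$ to $U\cdot 1=0$, so for $V_0>0$ this single element already lies in $\ker D^T_{-1/m}$, which proves the lemma without relying on the full decomposition.
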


So we conclude that whenever \(V_0(K) \neq 0\), the Heegaard Floer homology \(HF^+(Y_{-1/m}(K))\) contains elements of odd absolute grading. Therefore, if \(HF_{\mathrm{red}}(Z)\) is concentrated in even \(\mathbb{Z}/2\mathbb{Z}\)-grading, then \(Z\) cannot be expressed as \(Y_{-1/m}(K)\) for any integer homology sphere \(Y\) and any knot \(K\) with \(V_0(K)\ne0\).  This yields the following obstruction.

\begin{proposition}\label{prop:obstruction_even_red}
Let \(Z\) be an integer homology sphere such that every element of \(HF_{\mathrm{red}}(Z)\) has even \(\mathbb{Z}/2\mathbb{Z}\)-grading.  
If \(Z = Y_{-1/m}(K)\) for some integer homology sphere \(Y\) and some knot \(K\subset Y\) with \(m>0\), then \(V_0(K)=0\).
\end{proposition}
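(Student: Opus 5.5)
Argue by contraposition. Suppose \(Z = Y_{-1/m}(K)\) with \(m>0\) and \(V_0(K)\neq 0\); we show that \(HF^+(Z)\) then has an odd-graded element outside the tower, which forces \(HF_{\mathrm{red}}(Z)\) to contain an odd-graded element.

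First, by Lemma~\ref{lem:kerDT_zero} (contrapositive), \(V_0(K)\neq 0\) gives \(\ker D^T_{-1/m}\neq 0\). In fact, by Proposition~\ref{prop:kerDT_neg}, the summand \(\tau_{d_0^+}(V_0)\) is a nonzero finite-length piece. By Lemma~\ref{lem:kerDT_odd}, every nonzero element of it has odd absolute grading. Since \(\ker D^T_{-1/m}\subset \ker \mathbf{D}^+_{-1/m} = \operatorname{im} j_*\), each such element \(x\) lifts to some \(\widetilde x\in HF^+(Z)\). The map \(j_*\) is homogeneous with a fixed degree shift; with the paper's convention that the cone gradings match \(HF^+\), this degree is \(0\) modulo \(2\) relative to the \(\mathbb{Q}\)-grading on \(\mathbb{A}\) (equivalently, one checks this against the unknot normalization). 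Hence we may take \(\widetilde x\) homogeneous of odd grading.

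Next, the lift must not lie in the tower \(\mathcal{T}^+_{d(Z)}\). The tower is supported in degrees \(d(Z)+2\mathbb{Z}_{\ge 0}\), and \(d(Z)\) is even by Lemma~\ref{lem:dY_even}. So every homogeneous element of odd degree in \(HF^+(Z)=\mathcal{T}^+_{d(Z)}\oplus HF_{\mathrm{red}}(Z)\) lies entirely in \(HF_{\mathrm{red}}(Z)\), and it is nonzero because \(j_*(\widetilde x)=x\neq 0\). This gives a nonzero odd-graded element of \(HF_{\mathrm{red}}(Z)\), contradicting the hypothesis. Therefore \(V_0(K)=0\).

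The main obstacle is the grading bookkeeping for the lift. The map \(j_*\) comes from the connecting map and projection in the cone, and Lemma~\ref{lem:kerDT_odd} computes parities using the grading in which \(\mathbf{v},\mathbf{h}\) have degree \(-1\). So I must verify that the grading convention of Proposition~\ref{prop:kerDT_neg} is the one under which \(j_*\) preserves parity. My first step would be to test the unknot case \(Y_{-1/m}(U)=Y\). There \(\mathbb{A}\)-towers map by \(v_0=\mathrm{id}\), and the bottom generators must reproduce \(d(Y)\) in \(HF^+(Y)\), which pins down the parity of the shift. If this check instead showed that \(j_*\) flips parity, the same argument would run with ``odd'' and ``even'' interchanged, and I would recheck how Lemma~\ref{lem:kerDT_odd} is stated relative to \(HF^+(Z)\).
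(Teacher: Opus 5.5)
Your proposal is correct and follows the paper's own argument. If \(V_0(K)\neq 0\), the odd-graded summand \(\tau_{d_0^+}(V_0)\subset\ker D^T_{-1/m}\subset\ker\mathbf{D}^+_{-1/m}=\operatorname{im} j_*\) (Lemma~\ref{lem:kerDT_odd}) lifts to odd-graded elements of \(HF^+(Z)\), and these must lie in \(HF_{\mathrm{red}}(Z)\) because the tower sits in the even degrees \(d(Z)+2\mathbb{Z}_{\ge 0}\); the paper leaves that last step implicit, and you make it explicit.

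On the parity check you flag: for negative slopes the tower of \(HF^+(Z)\) comes from \(\operatorname{coker}\mathbf{D}^+\), that is, from \(\mathbb{B}^T\) via \(i_*\), rather than from the \(\mathbb{A}\)-towers. So the unknot normalization makes \(\mathbb{B}^T\) even-graded, and since \(\mathbf{D}\) has degree \(-1\), \(\mathbb{A}^T\) and hence \(\ker D^T_{-1/m}\) are odd-graded, confirming Lemma~\ref{lem:kerDT_odd}. It is good that this holds, because your fallback of interchanging ``odd'' and ``even'' would not have produced any contradiction with the hypothesis.
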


Combining with discussion in ~\ref{prop.2.7}, we have:

\begin{theorem}\label{thm:intro-main1}
Let \(Y\) and \(Z\) be integer homology spheres such that every element of \(HF_{\mathrm{red}}(Z)\) has even \(\mathbb{Z}/2\mathbb{Z}\)-grading and \(d(Z) > d(Y)\).  
Suppose there exists a knot \(K\subset Y\) such that \(Z = Y_{\pm1/m}(K)\) with \(m>0\), then \(V_0(K)=0\).
\end{theorem}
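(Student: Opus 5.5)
The plan is to split according to the sign of the slope and use Proposition~\ref{prop.2.7} to eliminate one of the two cases. Suppose \(Z = Y_{\epsilon/m}(K)\) with \(\epsilon\in\{+1,-1\}\) and \(m>0\).

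First take \(\epsilon=+1\). Corollary~\ref{cor25} gives \(d(Z)=d(Y_{1/m}(K))\le d(Y)-2V_0(K)\le d(Y)\). This contradicts the hypothesis \(d(Z)>d(Y)\), which is exactly the first half of Proposition~\ref{prop.2.7}. So a positive slope cannot occur at all, and the conclusion about \(V_0(K)\) holds vacuously in this case.

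Now take \(\epsilon=-1\). Here Proposition~\ref{prop:obstruction_even_red} applies directly, since its only hypothesis is that \(HF_{\mathrm{red}}(Z)\) is supported in even \(\mathbb{Z}/2\mathbb{Z}\)-grading; it yields \(V_0(K)=0\).

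The only step with real content is hidden inside Proposition~\ref{prop:obstruction_even_red}, and I would check it explicitly. Assume \(V_0(K)\neq0\). Then \(\ker D^T_{-1/m}\neq 0\): by Proposition~\ref{prop:kerDT_neg} the \(n=0\) summand is \(\tau_{d_0^+}(V_0)\), which is nonzero; this is the contrapositive of Lemma~\ref{lem:kerDT_zero}. By Lemma~\ref{lem:kerDT_odd}, the kernel is supported in odd grading. Choose a nonzero kernel element \(x\) of odd grading and lift it to \(\widetilde{x}\in HF^+(Z)\) as described in the preliminaries. Since \(j_*\) is homogeneous, we may take \(\widetilde{x}\) homogeneous of odd grading, up to the fixed shift convention of the mapping cone.

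It remains to see that \(\widetilde{x}\) produces a nonzero element of \(HF_{\mathrm{red}}(Z)\) in odd grading. The tower \(\mathcal{T}^+_{d(Z)}\) lies in even gradings, because \(d(Z)\) is even by Lemma~\ref{lem:dY_even}. So the odd-graded \(\widetilde{x}\) projects nontrivially to \(HF_{\mathrm{red}}(Z)\) in odd grading, contradicting the hypothesis on \(Z\).

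I expect the main obstacle to be the parity bookkeeping in this step. One has to confirm that the grading convention on the mapping cone, where \(j_*\) shifts degree by one, is consistent with calling the lift odd in \(HF^+(Z)\). Should the parity come out flipped, I would fall back on the lower bound \(d(Z)\ge d(Y)+2V_0(m(K))\) from Lemma~\ref{lemma:neg1m} to pin down the position of the tower.
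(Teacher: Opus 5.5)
Your proposal is correct and is essentially the paper's own proof: Proposition~\ref{prop.2.7} excludes the positive slope because $d(Z)>d(Y)$, and the negative slope is handled by Proposition~\ref{prop:obstruction_even_red}. You unpack that proposition exactly as the paper does: the summand $\tau_{d_0^+}(V_0)\subset\ker D^T_{-1/m}$ is nonzero when $V_0(K)\neq 0$, lies in odd grading, and lifts through $j_*$ to an odd-graded, hence reduced, class in $HF^+(Z)$.

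On the parity worry, nothing flips. Since $\mathbf{v}_k,\mathbf{h}_k$ have degree $-1$, both $i_*$ and $j_*$ preserve grading rather than shifting it by one. For a negative slope, the unknot normalization (where the tower of $HF^+(Y)$ comes from the cokernel of $D$) places $1\in(0,\mathbf{B}^T)$ in degree $d(Y)$, not the positive-slope value $d(Y)-1$. Hence the bottom of $\tau_{d_0^+}(V_0)$ has degree $d(Y)+1-2V_0$, which is odd as claimed. As a check, $S^3_{-1}(T_{2,3})=\Sigma(2,3,7)$ with $T_{2,3}$ right-handed has $HF_{\mathrm{red}}=\mathbb{F}_{(-1)}$. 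Your fallback via Lemma~\ref{lemma:neg1m} is therefore unnecessary, and it could not have helped anyway, because the tower is even-graded wherever it starts.
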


\begin{remark}
Manifolds whose reduced Floer homology is concentrated in even \(\mathbb{Z}/2\mathbb{Z}\)-grading are abundant.  For instance, all plumbed \(3\)-manifolds associated to negative‑definite trees studied by Ozsváth and Szabó \cite{OS03b} satisfy this property.  More generally, Némethi's class of ``almost rational'' plumbed manifolds \cite{Nem05} also have this property.  Additionally, any \(L\)-space (where \(HF_{\mathrm{red}}=0\)) trivially satisfies the condition.
\end{remark}

\subsection{Positive surgery and calculation}

We now consider positive slope \(1/m\) with \(m>0\). We will obtain a criterion forcing $U \cdot HF_k^{\mathrm{red}}(Z) \neq 0$ for sufficiently large $d(Y)-d(Z)$.

For positive slopes the map \(D_{1/m}^{T}\) is surjective (see \cite[Lemma 12]{Gai14}). Let $\tau_d(N)$ be the submodule of $\mathcal{T}^+_d$ generated by $\{U^{-n} \mid 0 \le n < N\}$. The following proposition gives the explicit description of \(\ker D_{1/m}^{T}\) for a positive \(1/m\) surgery, incorporating the absolute grading shift by \(d(Y)\).

\begin{proposition}\label{prop:kerDT_pos}
Let \(m>0\), then
\[
\ker D_{1/m}^{T} \;\cong\; \mathcal{T}^+_{d_0} \;\oplus\; \bigoplus_{n\ge 1} \tau_{d_n^-}\bigl(H_{\lfloor -n/m\rfloor}\bigr) \;\oplus\; \bigoplus_{n\ge 1} \tau_{d_n^+}\bigl(V_{\lfloor n/m\rfloor}\bigr),
\]
where the absolute gradings are given by
\[
d_0 = d(Y) - 2V_0,\qquad
d_n^+ = d_0 + 2\sum_{k=0}^{n-1}\bigl(H_{\lfloor k/m\rfloor} - V_{\lfloor (k+1)/m\rfloor}\bigr),
\]
\[
d_n^- = d_0 + 2\sum_{k=0}^{n-1}\bigl(V_{\lfloor -k/m\rfloor} - H_{\lfloor -(k+1)/m\rfloor}\bigr).
\]
\end{proposition}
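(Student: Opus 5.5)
Following the proof of Proposition~\ref{prop:kerDT_neg}, I will reduce to the $S^3$ computation of \cite{Gai14} for positive slopes and only track the absolute grading shift. The kernel of $D_{1/m}^{T}$ depends only on two pieces of data: the module structure of $\mathbb{A}^{T}_{1/m}$ and $\mathbb{B}^{T}$ (each summand is a copy of $\mathcal{T}^+$), and the maps $\mathbf{v}_k^T=U^{V_k}$, $\mathbf{h}_k^T=U^{H_k}$. The numbers $V_k,H_k$ for $K\subset Y$ satisfy the same properties (1)--(5) as for knots in $S^3$. So the purely algebraic description of $\ker D^T_{1/m}$ as an $\mathbb{F}[U]$-module should carry over verbatim from \cite[Lemma 12 and the accompanying kernel computation]{Gai14}.

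Concretely, I will order the summands of the cone as a zigzag. Since $p=1$, the $n$-th copy of $\mathbf{A}^T$ is $\mathbf{A}^T_{\lfloor n/m\rfloor}$. Its map $\mathbf{v}$ goes to $(n,\mathbf{B}^T)$ and its map $\mathbf{h}$ goes to $(n+1,\mathbf{B}^T)$. Truncate to $|n|\le N$ with $N$ larger than $m\,g(K)$. Outside this window $\mathbf{v}$ (for $n\gg0$) or $\mathbf{h}$ (for $n\ll0$) is an isomorphism, so the tails contribute nothing new to the kernel. This is the standard truncation of \cite{OS11}.

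Inside the window I will solve $D^T(\{a_n\})=0$ directly. Starting at $n=0$, the element $a_0=U^{-j}$ in $\mathbf{A}^T_0$ is required to extend to a kernel element. Because $D^T$ is surjective, a counting argument shows the kernel contains one full tower together with finite pieces. For each $n\ge1$, the piece $\tau(V_{\lfloor n/m\rfloor})$ is the part of $\mathbf{A}^T_{\lfloor n/m\rfloor}$ killed by $\mathbf{v}$, paired with a compensating element propagated back toward $n=0$ through the $\mathbf{h}$'s. Symmetrically, for negative indices, the piece $\tau(H_{\lfloor -n/m\rfloor})$ is what is killed by $\mathbf{h}$. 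Since all of this is identical to the $S^3$ case, I will mostly cite \cite{Gai14} here.

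The step needing actual care is the absolute gradings. By \cite[Lemma 3.2]{Gai15}, $1\in(0,\mathbf{B}^T)$ has grading $d(Y)-1$. The bottom of the tower in $(0,\mathbf{A}^T_0)$ maps to $U^{V_0}$ under a degree $-1$ map, so it has grading $d(Y)-1+1-2V_0=d(Y)-2V_0=d_0$. This matches Corollary~\ref{cor25}, where the tower of $HF^+(Z)$ has bottom $d(Z)\le d(Y)-2V_0$.

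To pass from the $(n)$-th generator to the $(n+1)$-th, I go through $(n+1,\mathbf{B}^T)$. The element hit by $\mathbf{h}_{\lfloor n/m\rfloor}$ from copy $n$ must equal the element hit by $\mathbf{v}_{\lfloor (n+1)/m\rfloor}$ from copy $n+1$. Equating gradings gives $d_{n+1}=d_n+2(H_{\lfloor n/m\rfloor}-V_{\lfloor (n+1)/m\rfloor})$. Summing yields the stated formula for $d_n^+$; the analogous bookkeeping on the negative side gives $d_n^-$. The main obstacle is keeping these index and sign conventions consistent, particularly the shift between $V$ and $H$. I will check the result against the $S^3$ formulas of \cite{Gai14}, which it should reproduce up to the additive constant $d(Y)$.
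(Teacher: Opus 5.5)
Your proposal is correct and is essentially the paper's own argument. Both take the $\mathbb{F}[U]$-module structure of $\ker D^{T}_{1/m}$ from Gainullin's $S^3$ computation, which uses only the formal properties of $V_k,H_k$, and shift every absolute grading by $d(Y)$ using that $1\in(0,\mathbf{B}^T)$ sits in degree $d(Y)-1$. Your recursion $d_{n+1}=d_n+2\bigl(H_{\lfloor n/m\rfloor}-V_{\lfloor (n+1)/m\rfloor}\bigr)$ with $d_0=d(Y)-2V_0$ correctly reproduces the stated formulas, and the analogous recursion gives $d_n^-$.

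One correction to your heuristic, which is harmless since you cite \cite{Gai14} for that step anyway. For $n\ge1$ the summand $\tau(V_{\lfloor n/m\rfloor})$ at position $n$ already lies in $\ker D^T_{1/m}$ by itself, with no compensating element needed. The reason is that $H_s=V_s+s\ge V_s$ for $s\ge0$, so $\mathbf{h}$ also kills it. Symmetrically, the $\tau(H_{\lfloor -n/m\rfloor})$ pieces are killed by $\mathbf{v}$.
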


\begin{proof}
For \(Y=S^3\) the formulas are proved in \cite[Corollary 14]{Gai14} (see also \cite[Lemma 13]{Gai14}).  For a general \(Y\) the absolute grading of the generator \(1\in(0,B^T)\) is \(d(Y)-1\) instead of \(-1\); shifting all gradings by \(d(Y)\) yields the stated expressions.  The surjectivity of \(D_{1/m}^{T}\) guarantees that the tower part is exactly \(\mathcal{T}^+_{d_0}\).
\end{proof}

The following proposition gives a formula for the absolute gradings of the elements in \(\ker D_{1/m}^{T}\) for a positive \(1/m\) surgery.

\begin{lemma}\label{prop:grading_pos_1m}
Let \(m>0\) and let \(Y\) be an integer homology sphere.  For the positive slope \(1/m\), write an integer \(n \ge 0\) as \(n = s m + i\) with \(s \ge 0\) and \(0 \le i < m\).  
Then the absolute grading of the element in \(\ker D_{1/m}^{T}\) corresponding to \(\tau_{d_n^+}(V_{\lfloor n/m\rfloor})\) is given by
\[
{\, d_{sm+i}^+ = d(Y) - 2V_s + m s(s-1) + 2 s i \, }.
\]
Here \(V_s = V_{\lfloor n/m\rfloor}\) since \(\lfloor (sm+i)/m\rfloor = s\).
\end{lemma}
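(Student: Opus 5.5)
The plan is a direct computation from the grading formula in Proposition~\ref{prop:kerDT_pos},
\[
d_n^+ = d_0 + 2\sum_{k=0}^{n-1}\bigl(H_{\lfloor k/m\rfloor} - V_{\lfloor (k+1)/m\rfloor}\bigr),\qquad d_0=d(Y)-2V_0 .
\]
The first step is to eliminate the $H$'s using Gainullin's identity (5), $H_j=V_j+j$. This turns the summand into
\[
V_{\lfloor k/m\rfloor}+\lfloor k/m\rfloor - V_{\lfloor (k+1)/m\rfloor}.
\]

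Next I will split the range $0\le k\le n-1=sm+i-1$ into blocks on which $\lfloor k/m\rfloor=t$ is constant, namely $k\in\{tm,\dots,tm+m-1\}$. Inside a block we have $\lfloor (k+1)/m\rfloor=t$ except at the last index $k=tm+m-1$, where it jumps to $t+1$. So on a block the summand equals $t$ at the first $m-1$ indices and $V_t-V_{t+1}+t$ at the last one.

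There are $s$ complete blocks, $t=0,\dots,s-1$. Each contributes $mt+V_t-V_{t+1}$. Summing over these blocks, the $V$-terms telescope to $V_0-V_s$, and the linear terms give $m\,s(s-1)/2$.

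The remaining indices $k=sm,\dots,sm+i-1$ form a partial block with $t=s$. Since $i<m$, the largest such index satisfies $sm+i-1<sm+m-1$, so the jump index is never reached. Hence each of these $i$ terms equals $s$, for a contribution of $si$. The edge cases $i=0$ (empty partial block) and $s=0$ (no complete blocks) are covered by the same bookkeeping.

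Putting this together,
\[
d_n^+=d(Y)-2V_0+2\Bigl(\tfrac{m s(s-1)}{2}+V_0-V_s+si\Bigr)=d(Y)-2V_s+ms(s-1)+2si,
\]
which is the claimed formula. The identification $V_{\lfloor n/m\rfloor}=V_s$ is immediate from $\lfloor (sm+i)/m\rfloor=s$.

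The only delicate point is keeping the floor indices straight at the block boundaries, in particular checking that the telescoping uses exactly the jump indices and that the partial block contains none. The rest is arithmetic.
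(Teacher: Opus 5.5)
Your proposal is correct and follows the same route as the paper: substitute $H_k=V_k+k$ into the grading formula of Proposition~\ref{prop:kerDT_pos}, telescope the $V$-terms to $V_0-V_s$, and evaluate $\sum_{k=0}^{sm+i-1}\lfloor k/m\rfloor=\frac{ms(s-1)}{2}+si$. The only difference is bookkeeping: grouping the indices into blocks of constant $\lfloor k/m\rfloor$ effectively proves the floor-sum identity, which the paper states without justification.
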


\begin{proof}
From the mapping cone formula for positive slopes (see \cite[Corollary~14]{Gai14} ) and the absolute grading shift by \(d(Y)\) (Lemma~\ref{lem:dY_even}), the initial value for \(n=0\) is
\[
d_0 = d(Y) - 2\max\{V_0, H_{-1}\}= d(Y) - 2V_0\].

For a general \(n\), \(d_n^+\) is
\[
d_n^+ = d_0 + 2\sum_{j=0}^{n-1}\bigl( H_{\lfloor j/m\rfloor} - V_{\lfloor (j+1)/m\rfloor} \bigr),
\]
where we have set \(i=0\).  Using the relation \(H_k = V_k + k\) (see \cite[Lemma~6.1]{Gai14}), the sum becomes
\[
d_n^+ = d_0 + 2\sum_{j=0}^{n-1}\bigl( V_{\lfloor j/m\rfloor} - V_{\lfloor (j+1)/m\rfloor} \bigr) + 2\sum_{j=0}^{n-1}\left\lfloor\frac{j}{m}\right\rfloor.
\]
The first sum telescopes:
\[
\sum_{j=0}^{n-1}\bigl( V_{\lfloor j/m\rfloor} - V_{\lfloor (j+1)/m\rfloor} \bigr) = V_0 - V_{\lfloor n/m\rfloor}.
\]
Thus
\[
d_n^+ = d_0 + 2\bigl( V_0 - V_{\lfloor n/m\rfloor} \bigr) + 2\sum_{j=0}^{n-1}\left\lfloor\frac{j}{m}\right\rfloor.
\]

Now set \(n = sm + i\) with \(s\ge 0\) and \(0\le i < m\).  Then \(\lfloor n/m\rfloor = s\).  The sum over the floor function is:
\[
\sum_{j=0}^{sm+i-1}\left\lfloor\frac{j}{m}\right\rfloor = \frac{m s(s-1)}{2} + s i.
\]
Substituting \(d_0 = d(Y)-2V_0\) and simplifying,
\[
\begin{aligned}
d_{sm+i}^+ &= d(Y)-2V_0 + 2V_0 - 2V_s + 2\left(\frac{m s(s-1)}{2} + s i\right) \\
&= d(Y) - 2V_s + m s(s-1) + 2 s i.
\end{aligned}
\]
This completes the proof.
\end{proof}

\begin{remark}
The formula shows that for a fixed \(s\), the grading increases linearly with \(i\) (slope \(2s\)).  In particular, when \(s=0\) (i.e. \(n < m\)), all gradings are constant equal to \(d(Y)-2V_0\).  The parity of \(d_{sm+i}^+\) is even because \(d(Y)\) is even and the remaining terms are multiples of \(2\): \(m s(s-1)\) is even, \(2s i\) is even, and \(2V_s\) is even.
\end{remark}

\begin{lemma}\label{lem:tau_explicit}
Let the notation be as in Proposition~\ref{prop:grading_pos_1m}.  For a fixed \(s\ge 0\) and \(0\le i < m\), suppose \(V_s > 0\).  Then the summand \(\tau_{d_n^+}(V_s)\) with \(n = s m + i\) is isomorphic to \(\mathbb{F}[U]/(U^{V_s})\) as an \(\mathbb{F}[U]\)-module, and its absolute gradings form the set
\[
\bigl\{ d_{sm+i}^+ + 2k \;\big|\; 0 \le k \le V_s-1 \bigr\},
\]
where
\[
gr_{\text{bottom}}=d_{sm+i}^+ = d(Y) - 2V_s + m s(s-1) + 2 s i
\]
is the absolute grading of the bottom element.  Consequently, the top element has grading
\[
gr_{\text{top}} = d_{sm+i}^+ + 2(V_s-1) = d(Y) + m s(s-1) + 2 s i - 2.
\]
All gradings are even because \(d(Y)\) is even (Lemma~\ref{lem:dY_even}) and the other terms are multiples of \(2\).
\end{lemma}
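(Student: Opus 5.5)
The lemma is essentially an unpacking of the definition of \(\tau_d(N)\) together with the grading formula of Lemma~\ref{prop:grading_pos_1m}, so I would argue directly from these.

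\textbf{Step 1: the module structure.} Recall that \(\tau_d(N)\subset\mathcal{T}^+_d\) is the submodule spanned over \(\mathbb{F}\) by \(1,U^{-1},\dots,U^{-(N-1)}\). It is closed under \(U\), since \(U\cdot U^{-n}=U^{-(n-1)}\) for \(n\ge1\) and \(U\cdot 1=0\) in \(\mathcal{T}^+\). The element \(U^{-(N-1)}\) generates it as an \(\mathbb{F}[U]\)-module, and its annihilator is exactly \((U^{N})\). Hence \(\tau_d(N)\cong\mathbb{F}[U]/(U^N)\). Applying this with \(N=V_s>0\), and noting \(\lfloor (sm+i)/m\rfloor=s\), gives the isomorphism claimed for the summand of Proposition~\ref{prop:kerDT_pos} indexed by \(n=sm+i\).

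\textbf{Step 2: the gradings.} The summand \(\tau_{d_n^+}(V_s)\) sits in \(\ker D^T_{1/m}\), with its bottom element (the image of \(1\), annihilated by \(U\)) in grading \(d_n^+\). This is the convention under which Lemma~\ref{prop:grading_pos_1m} computes \(d_n^+\). Since \(U\) lowers grading by \(2\), the element \(U^{-k}\) has grading \(d_n^+ +2k\) for \(0\le k\le V_s-1\). Substituting the closed formula \(d_{sm+i}^+=d(Y)-2V_s+ms(s-1)+2si\) gives the bottom grading. Taking \(k=V_s-1\) gives the top grading \(d(Y)+ms(s-1)+2si-2\), because the \(-2V_s\) cancels against \(+2V_s\).

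\textbf{Step 3: parity.} Every grading is even, because:
\begin{itemize}
\item \(d(Y)\) is even by Lemma~\ref{lem:dY_even};
\item \(s(s-1)\) is even, so \(ms(s-1)\) is even;
\item the terms \(2si\), \(2V_s\) and \(2k\) are visibly even.
\end{itemize}

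\textbf{Main obstacle.} The only delicate point is the convention that \(d_n^+\) labels the \emph{bottom} of the \(\tau\)-summand rather than its top. I would confirm this against the \(S^3\) computation in \cite[Corollary 14]{Gai14}, where the subscript of \(\tau_d\) records the grading of \(1\in\mathcal{T}^+_d\). The rest of the proof is bookkeeping.
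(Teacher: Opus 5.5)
Your proposal is correct and follows essentially the same route as the paper. Both arguments identify \(\tau_{d_n^+}(V_s)\) with \(\mathbb{F}[U]/(U^{V_s})\), with bottom generator in grading \(d_n^+\), and place \(U^{-k}\) in grading \(d_n^+ + 2k\). They then substitute the closed formula from Lemma~\ref{prop:grading_pos_1m} and read off the top grading and the parity. The one difference is that you check the module isomorphism directly from the definition of \(\tau_d(N)\), where the bottom-grading convention is already built into the notation \(\mathcal{T}^+_d\) (\(\deg 1 = d\)), while the paper cites \cite[Corollary~14]{Gai14}; either is fine.
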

\begin{proof}
The description of \(\ker D_{1/m}^{T}\) in \cite[Corollary~14]{Gai14} shows that each \(\tau_{d_n^+}(V_s)\) is a copy of \(\mathbb{F}[U]/(U^{V_s})\) with the bottom generator having grading \(d_n^+\).  Substituting the explicit formula for \(d_{sm+i}^+\) from Proposition~\ref{prop:grading_pos_1m} yields the stated bottom grading.  Multiplication by \(U^{-1}\) raises grading by \(2\), so \(U^{-k}\) (for \(0\le k < V_s\)) has grading \(d_{sm+i}^+ + 2k\).  The top element corresponds to \(k = V_s-1\), giving the displayed top grading.  The evenness follows from Lemma~\ref{lem:dY_even} and the fact that \(m s(s-1)\), \(2s i\), and \(2V_s\) are even.
\end{proof}

We now combine the explicit grading formulas for positive \(1/m\) surgery with \(V_k \ge V_0 - k\) to show that for sufficiently large \(V_0\) there is always $\tau_d(N)$ in \(\ker D_{1/m}^{T}\) whose top lies above a prescribed even integer while its bottom lies below it.

\begin{proposition}\label{prop:straddle}
Fix an integer \(m>0\) and an even integer \(d(Y)\).  Let \(k\) be an even integer.  
There exists a positive integer \(N = N(d(Y), m, k)\) such that whenever a knot \(K\) in \(Y\) satisfies \(V_0 \ge N\), one can find integers \(s\ge 1\) and \(0\le i < m\) with the following properties for the summand \(\tau_{d_{sm+i}^+}(V_s)\) in \(\ker D_{1/m}^{T}\) (see Proposition~\ref{prop:grading_pos_1m} and Lemma~\ref{lem:tau_explicit}):

\begin{enumerate}
\item \(\operatorname{gr_{top}}\bigl(\tau_{d_{sm+i}^+}(V_s)\bigr) \ge k\);
\item \(\operatorname{gr_{bottom}}\bigl(\tau_{d_{sm+i}^+}(V_s)\bigr) < k\);
\item \(V_s \ge 2\).
\end{enumerate}
In other words, \( U\cdot HF^{\mathrm{red}}_k(Y_{1/m}(K)) \neq 0. \).
\end{proposition}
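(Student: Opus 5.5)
Take \(i=0\), and choose \(s\) as a function of \(V_0\) by a discrete intermediate-value argument on the explicit gradings of Lemma~\ref{lem:tau_explicit}. With \(i=0\) the summand \(\tau_{d_{sm}^+}(V_s)\) has
\[
\operatorname{gr_{top}} = d(Y) + m s(s-1) - 2,\qquad \operatorname{gr_{bottom}} = d(Y) - 2V_s + m s(s-1).
\]
The top grading no longer involves \(V_s\), is nondecreasing in \(s\), and tends to \(+\infty\). Let \(s_0 = s_0(d(Y),m,k)\ge 1\) be the least \(s\ge1\) with \(d(Y)+ms(s-1)-2\ge k\). This depends only on \(d(Y),m,k\), and condition (1) holds automatically for \(s=s_0\). (If \(k\le d(Y)-2\) then \(s_0=1\).)

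Next, force conditions (2) and (3) using the growth property \(V_{s}\ge V_0-s\), which follows by iterating \(V_{k+1}\ge V_k-1\) from property (2). At \(s=s_0\):
\begin{itemize}
\item condition (2) reads \(2V_{s_0} > d(Y)+ms_0(s_0-1)-k\); this is guaranteed once \(2(V_0-s_0) > d(Y)+ms_0(s_0-1)-k\);
\item condition (3) is guaranteed once \(V_0-s_0\ge 2\).
\end{itemize}
So set
\[
N = \max\Bigl\{\, s_0+2,\; s_0 + \bigl\lfloor \tfrac{d(Y)+ms_0(s_0-1)-k}{2}\bigr\rfloor + 1 \Bigr\}.
\]
Then \(V_0\ge N\) yields all three conditions with \(s=s_0\), \(i=0\). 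This part is routine bookkeeping; minimality of \(s_0\) is not even needed.

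The substantive step is the final sentence, the translation to \(U\cdot HF^{\mathrm{red}}_k(Y_{1/m}(K))\neq 0\).

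First, since \(k\) and all gradings in the summand are even with spacing \(2\), conditions (1) and (2) give an element \(x=U^{-j}\cdot(\text{bottom})\) of grading exactly \(k\) with \(j\ge1\). Then \(Ux\neq 0\) inside \(\tau_{d_{sm}^+}(V_s)\subset\ker D^T_{1/m}\); here \(V_s\ge2\) ensures the cyclic module has length at least \(2\).

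Next, lift \(x\) via \(j_*\) to \(\widetilde{x}\in HF^+_k(Y_{1/m}(K))\), as in the Preliminaries. Because \(j_*\) is \(U\)-equivariant, \(j_*(U\widetilde{x})=Ux\neq0\), so \(U\widetilde{x}\neq0\).

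It remains to show \(\widetilde x\) can be taken in \(HF_{\mathrm{red}}\), or at least that its class modulo the tower survives \(U\). The plan is to use that the \(\tau\)-summands are \(U\)-torsion, so \(U^{V_s}\widetilde x\in\ker j_*=\operatorname{im} i_*\). I would then compare with the tower \(\mathcal{T}^+_{d_0}\), which maps isomorphically onto the \(\mathcal{T}^+_{d_0}\) summand of \(\ker D^T_{1/m}\) by Proposition~\ref{prop:kerDT_pos}. Subtracting the unique tower element of grading \(k\) (if any) from \(\widetilde x\), I expect \(j_*\) still sends the difference to something whose \(U\)-image is nonzero in the \(\tau\)-summand, since the direct-sum decomposition separates the tower from \(\tau\). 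This gives a nonzero class in \(HF^{\mathrm{red}}_k\) whose \(U\)-multiple is nonzero.

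The main obstacle is this last identification. Possible contributions of \(\mathbb{B}^{red}\) through \(i_*\) (when \(Y\) is not an L-space) could interfere, and controlling them may require the decomposition argument above, or enlarging \(N\) by a constant depending on \(Y\).
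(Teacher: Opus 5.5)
Your verification of conditions (1)--(3) is correct and is the paper's argument. You choose \(s\) so that the top grading reaches \(k\); it grows quadratically in \(s\) and does not involve \(V_s\). Then you push the bottom grading below \(k\) using \(V_s\ge V_0-s\). The paper takes \(i=m-1\) where you take \(i=0\), which only changes \(N\) (yours is smaller). The paper's proof ends there and treats the final sentence as a mere restatement.

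You rightly see that the final sentence needs proof, but your proposal leaves it open, and the repair you sketch aims at the wrong thing. Subtracting a tower element \(t\) does not change the class of \(\widetilde{x}\) in \(HF^{\mathrm{red}}\). Knowing \(U(\widetilde{x}-t)\neq 0\) in \(HF^{+}\) does not give \(U\widetilde{x}\notin U^{N}HF^{+}(Z)\) for large \(N\), and that is what \(U\cdot HF^{\mathrm{red}}_k(Z)\neq 0\) means. Contributions through \(i_*\) are harmless because they lie in \(\ker j_*\), and no enlargement of \(N\) is needed.

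The missing lemma has two parts. First, for large \(N\), \(j_*\) maps \(U^{N}HF^{+}(Z)\) into the tower \(T\cong\mathcal{T}^+_{d_0}\) of \(\ker D^{T}_{1/m}\). Second, \(T\) contains no nonzero element supported in a copy \(n\neq 0\).

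\emph{First part.} \(\mathbf{A}^T_s\) is \(U\)-divisible and \(\mathbf{B}^{red}\) is \(U\)-nilpotent, so \(\mathbf{v}_s,\mathbf{h}_s\) map \(\mathbf{A}^T_s\) into \(\mathbf{B}^T\). Pick \(N_0\) with \(U^{N_0}\mathbf{A}^{red}_s=0\) for all \(s\); this is possible since \(\mathbf{A}^+_s\cong\mathbf{B}^+\) for \(|s|\ge g(K)\). For \(w=w_T+w_{red}\in\ker D^+\) one gets \(D^T(w_T)=D^+(w_{red})\), which is killed by \(U^{N_0}\). Hence \(U^{N}w=U^{N-N_0}(U^{N_0}w_T)\in U^{N-N_0}(\ker D^T)\). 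This equals \(T\) once \(N-N_0\) exceeds the lengths of the \(\tau\)-summands, which are finitely many and bounded.

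\emph{Second part.} In high grading \(\ker D^T\) is one-dimensional. A nonzero element there must have nonzero component in copy \(0\): otherwise chase outward with \(\mathbf{v},\mathbf{h}\), which are injective in high grading, to see that it vanishes. Applying powers of \(U\), the copy-\(0\) component stays nonzero down to \(d_0=d(Y)-2V_0=\mathrm{gr}(1\in(0,\mathbf{A}^T_0))\), the bottom of \(T\).

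\emph{Conclusion.} Since \(Ux\) lives in copy \(sm+i\ge m\ge 1\), \(Ux\notin T\). If \(U\widetilde{x}=U^{N}z\), then \(Ux=U^{N}j_*(z)\in T\), a contradiction. Thus \([\widetilde{x}]\in HF^{\mathrm{red}}_k(Z)\) satisfies \(U[\widetilde{x}]\neq 0\). With this lemma your proof is complete, and it supplies a step the paper omits.
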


\begin{proof}
From Lemma~\ref{lem:tau_explicit} we have for \(n = sm+i\)
\[
\operatorname{gr_{top}} = d(Y) + m s(s-1) + 2 s i - 2,\qquad
\operatorname{gr_{bottom}} = d(Y) - 2V_s + m s(s-1) + 2 s i.
\]

First choose an integer \(s \ge 1\) such that
\[
m s(s-1) \ge k - d(Y) + 2.
\]
Such an \(s\) exists because the left‑hand side grows quadratically; we fix one such \(s\).  Then for any \(i\) we already have \(\operatorname{gr_{top}} \ge k\).  Now set \(i = m-1\) (the largest possible value).  Condition (1) remains satisfied.  Condition (2) requires
\[
d(Y) - 2V_s + m s(s-1) + 2 s (m-1) < k,
\]
which is equivalent to
\[
2V_s > d(Y) + m s(s-1) + 2 s (m-1) - k.
\]
Denote the right‑hand side by \(C\) (\(C\) is a constant depending only on \(d(Y), m, k\) and the chosen \(s\)).  Using the inequality \(V_s \ge V_0 - s\), it suffices to have
\[
V_0 - s > \frac{C}{2}.
\]
Define
\[
N = \max\{\left\lfloor \frac{C}{2} \right\rfloor + s + 1,s+2\}.
\]
Then whenever \(V_0 \ge N\), we have \(\operatorname{gr_{bottom}} < k\).  Moreover, for such \(V_0\),
\[
V_s \ge V_0 - s \ge N - s \ge \max\{\left\lfloor \frac{C}{2} \right\rfloor + 1 , 2\}\ge 2,
\]
the argument still works by choosing \(s\) large enough from the start.  Hence all three conditions hold.
\end{proof}

For each fixed slope \(1/m\), Lemma~\ref{prop:straddle} provides a constant \(N(m)\) such that \(V_0 \ge N(m)\) implies the obstruction.  However, this constant may depend on \(m\).  To obtain a single bound that works for all positive integers \(m\) simultaneously , we need to show that such a uniform \(N\) exists.  This will be the goal of the remainder of this section.

\begin{lemma}\label{lem:uniform_straddle}
Let \(d(Y)\) be any even integer and let \(k\) be any even integer.  There exist positive integers \(M_0 = \max\left\{2,\; \left\lfloor\frac{k-d(Y)}{2}\right\rfloor+2\right\}+1\) and \(N_0 = N_0(d(Y), k)\) such that for every integer \(m \ge M_0\) and every knot \(K\) with \(V_0(K) \ge N_0\), we can choose \(i\) with \(0 \le i < m\)  for which the summand \(\tau_{d_{m+i}^+}(V_1)\) in \(\ker D^{T}_{1/m}\) satisfies
\[
\operatorname{gr_{bottom}}\bigl(\tau_{d_{m+i}^+}(V_1)\bigr) < k \le \operatorname{gr_{top}}\bigl(\tau_{d_{m+i}^+}(V_1)\bigr),
\]
and moreover \(V_1 \ge 2\).
\end{lemma}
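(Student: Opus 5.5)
The plan is to specialize the computation in the proof of Proposition~\ref{prop:straddle} to the single value \(s=1\). The point of this choice is that the quadratic term \(m s(s-1)\) then vanishes, so all dependence on \(m\) disappears from the grading formulas. By Lemma~\ref{lem:tau_explicit}, with \(s=1\) and \(n=m+i\), the summand \(\tau_{d_{m+i}^+}(V_1)\) has
\[
\operatorname{gr_{top}} = d(Y)+2i-2,\qquad \operatorname{gr_{bottom}} = d(Y)-2V_1+2i .
\]
These expressions involve only \(d(Y)\), \(i\) and \(V_1\). The parameter \(m\) enters only through the constraint \(0\le i<m\), and the hypothesis \(m\ge M_0\) is designed to satisfy exactly that constraint.

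First I choose \(i\). The condition \(\operatorname{gr_{top}}\ge k\) is equivalent to \(i\ge \frac{k-d(Y)}{2}+1\), and this bound is an integer because \(k\) and \(d(Y)\) are both even (Lemma~\ref{lem:dY_even}). I set
\[
i=\max\Bigl\{0,\ \tfrac{k-d(Y)}{2}+1\Bigr\}.
\]
By the definition of \(M_0\), this \(i\) is at most \(M_0-2\), so \(i<m\) for every \(m\ge M_0\). Hence the summand indexed by \(n=m+i\) really is one of the summands of \(\ker D^T_{1/m}\) listed in Proposition~\ref{prop:kerDT_pos}, and condition (1) holds for it.

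Next I secure the bottom inequality together with \(V_1\ge 2\). The condition \(\operatorname{gr_{bottom}}<k\) is equivalent to \(V_1>\frac{d(Y)+2i-k}{2}\). There are two cases.
\begin{itemize}
\item If \(k-d(Y)\ge -2\), then \(i=\frac{k-d(Y)}{2}+1\), and the requirement becomes \(V_1>1\), which is the same as \(V_1\ge 2\).
\item If \(k-d(Y)<-2\), then \(i=0\), and the requirement becomes \(V_1>\frac{d(Y)-k}{2}\).
\end{itemize}
In either case I use property (2), which gives \(V_1\ge V_0-1\). It therefore suffices to take
\[
N_0=\max\Bigl\{3,\ \tfrac{d(Y)-k}{2}+2\Bigr\}.
\]
With this choice, \(V_0\ge N_0\) forces both \(V_1\ge 2\) and \(\operatorname{gr_{bottom}}<k\). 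This \(N_0\) depends only on \(d(Y)\) and \(k\), as the statement requires.

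I do not expect a serious obstacle. The steps that need care are:
\begin{itemize}
\item verifying that the explicit formula for \(M_0\) in the statement really dominates the chosen \(i\), including the floor and the \(\max\) with \(2\);
\item checking the case split on the sign of \(k-d(Y)+2\), so that one \(N_0\) works in both cases;
\item confirming that the \(s=1\) summands of Proposition~\ref{prop:kerDT_pos} are genuinely present, i.e.\ nonzero, which holds because \(V_1\ge 2>0\).
\end{itemize}
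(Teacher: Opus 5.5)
Your proposal is correct and follows essentially the same route as the paper. You specialize to $s=1$ so that $m$ drops out of the grading formulas, choose $i$ so that $\operatorname{gr_{top}}\ge k$ (with $i<m$ guaranteed by $m\ge M_0$), and use $V_1\ge V_0-1$ to force both $\operatorname{gr_{bottom}}<k$ and $V_1\ge 2$. The only difference is cosmetic: you take the smallest admissible $i$, whereas the paper takes $i_0=M_0-1$, so your $N_0$ is slightly smaller than the paper's $\tfrac{1}{2}\bigl(d(Y)+2i_0-k\bigr)+2$.
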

\begin{proof}
Set \(i_0 = \max\left\{2,\; \left\lfloor\frac{k-d(Y)}{2}\right\rfloor+2\right\}\) (any choice larger than \(\frac{k-d(Y)}{2}\) works).  Then
\[
\operatorname{gr_{top}} = d(Y) + 2i_0 - 2 \ge k.
\]
The bottom grading is \(d(Y) - 2V_1 + 2i_0\).  To have \(\operatorname{gr_{bottom}} < k\) we need
\[
2V_1 > d(Y) + 2i_0 - k.
\]
Since \(V_1 \ge V_0 - 1\), it suffices that
\[
V_0 > \frac{1}{2}\bigl(d(Y) + 2i_0 - k\bigr) + 1.
\]
Define \(N_0\) to be the right‑hand side plus one.  Then for any \(V_0 \ge N_0\) we have \(V_1 \ge V_0-1 \ge 2\) .  Finally, choose \(M_0 = i_0 + 1\); then for all \(m \ge M_0\) we have \(i_0 < m\), so the index \(i = i_0\) is admissible.  Hence the desired straddling holds uniformly for all such \(m\).
\end{proof}

\begin{theorem}\label{prop:uniform_all_m}
Let \(d(Y)\) be an even integer and let \(k\) be an even integer.  
There exists a positive integer \(N = N(d(Y), k)\) such that for every positive integer \(m\) and every knot \(K\) in \(Y\) with \(V_0(K) \ge N\), the positive \(1/m\) surgery satisfies
\[
U\cdot HF^{\mathrm{red}}_k(Y_{1/m}(K)) \neq 0.
\]
In other words, if \(V_0\) is large enough (depending only on \(d(Y)\) and \(k\)), then the obstruction holds for all slopes \(1/m\).
\end{theorem}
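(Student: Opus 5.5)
The plan is to combine the two previous results by splitting the slopes into finitely many small ones and one uniform tail. Let \(M_0=M_0(d(Y),k)\) and \(N_0=N_0(d(Y),k)\) be the constants from Lemma~\ref{lem:uniform_straddle}. That lemma covers every \(m\ge M_0\) at once. The remaining slopes \(m=1,\dots,M_0-1\) are finitely many, and for each of them Proposition~\ref{prop:straddle} supplies a threshold \(N(d(Y),m,k)\). I would set
\[
N=\max\Bigl\{N_0,\;\max_{1\le m<M_0} N(d(Y),m,k)\Bigr\},
\]
which is a positive integer depending only on \(d(Y)\) and \(k\). The key point is that \(M_0\) itself depends only on \(d(Y)\) and \(k\), so this maximum is over a finite set.

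Now fix \(m>0\) and a knot \(K\) with \(V_0(K)\ge N\).

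\textbf{Case \(m<M_0\).} Since \(V_0\ge N(d(Y),m,k)\), Proposition~\ref{prop:straddle} produces a summand \(\tau_{d^+_{sm+i}}(V_s)\) of \(\ker D^T_{1/m}\) with \(V_s\ge 2\) whose bottom grading is below \(k\) and whose top grading is at least \(k\).

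\textbf{Case \(m\ge M_0\).} Lemma~\ref{lem:uniform_straddle} produces the same kind of summand with \(s=1\).

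In either case all gradings in the summand are even and spaced by \(2\) (Lemma~\ref{lem:tau_explicit}), and \(k\) is even. So there is an element \(x=U^{-j}\) of grading exactly \(k\) with \(j\ge 1\). It satisfies \(Ux\neq 0\) inside the summand, and \(x\) is not killed by \(U^{V_s}\), since the summand is \(\mathbb{F}[U]/(U^{V_s})\).

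The remaining step is to transfer this to \(HF^{\mathrm{red}}_k(Y_{1/m}(K))\). I would lift \(x\) to \(\widetilde x\in HF^+(Y_{1/m}(K))\) with \(j_*(\widetilde x)=x\), as in the Preliminaries. Because \(j_*\) is \(U\)-equivariant, \(j_*(U\widetilde x)=Ux\neq0\), so \(U\widetilde x\neq 0\). The lift should be taken homogeneous of degree \(k\); since \(j_*\) shifts grading by a fixed amount, the grading conventions must be matched here.

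To land in the reduced part, I would choose \(\widetilde x\) to be \(U\)-torsion, or more precisely argue modulo the tower \(\mathcal{T}^+_{d(Z)}\). The tower component of \(\ker D^T_{1/m}\) is the separate summand \(\mathcal{T}^+_{d_0}\) of Proposition~\ref{prop:kerDT_pos}. The finite summand containing \(x\) is annihilated by \(U^{V_s}\), so I would pick the lift \(\widetilde x\) in the \(U\)-torsion part, adjusting by the image of \(i_*\) if needed. Then \(U\widetilde x\) is a nonzero element in \(U\cdot HF^{\mathrm{red}}_k\).

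I expect this last step to be the main obstacle: showing that the lift can be chosen homogeneous and \(U\)-torsion, and that \(U\widetilde x\) survives modulo the tower. Note that \(U\widetilde x\) cannot lie in the image of the tower, because \(j_*\) maps the tower onto \(\mathcal{T}^+_{d_0}\), while \(Ux\) lies in a different direct summand. This observation should make the projection to \(HF^{\mathrm{red}}\) nonzero.
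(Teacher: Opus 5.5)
Your proposal is correct and follows the paper's proof: you treat the finitely many slopes $m<M_0$ with Proposition~\ref{prop:straddle} and the tail $m\ge M_0$ with Lemma~\ref{lem:uniform_straddle}, then take $N$ to be the maximum of the resulting thresholds, which depends only on $d(Y)$ and $k$ because $M_0$ does. Your extra step transferring the straddling summand to $HF^{\mathrm{red}}_k$, which the paper folds into the ``in other words'' of Proposition~\ref{prop:straddle}, already works through your final observation alone: $j_*$ carries $U^L HF^+(Y_{1/m}(K))$ onto $U^L\ker D^+_{1/m}=\mathcal{T}^+_{d_0}$ for large $L$, so the detour about choosing a $U$-torsion lift is unnecessary.
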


\begin{proof}
We already know from Lemma~\ref{lem:uniform_straddle} that there exist constants \(M_0\) and \(N_0\) (depending only on \(d(Y)\) and \(k\)) such that for every \(m \ge M_0\) and every knot with \(V_0 \ge N_0\), the required non‑trivial \(U\)-action exists.  For the finitely many slopes \(m = 1, 2, \dots, M_0-1\), we apply Lemma~\ref{prop:straddle} separately to each \(m\).  This lemma yields constants \(N_m\) (depending on \(d(Y), m, K\)) such that if \(V_0 \ge N_m\) then the obstruction holds for that particular \(m\).  Define
\[
N = \max\{N_0, N_1, N_2, \dots, N_{M_0-1}\}.
\]
Then for any \(m\) (whether \(m \ge M_0\) or \(m < M_0\)), the condition \(V_0 \ge N\) guarantees \(V_0 \ge N_m\) for the relevant \(m\), and therefore the obstruction holds.  Thus the same \(N\) works uniformly for all positive integers \(m\).
\end{proof}

We now combine the inequality for the correction term of a positive surgery with the obstruction obtained in Proposition~\ref{prop:uniform_all_m}.  The following theorem gives a simple numerical criterion that guarantees a non‑trivial action of \(U\) on the reduced Floer homology of the surgered manifold.

\begin{theorem}\label{thm:intro-main2}
Let \(Y\) be an integer homology sphere and let \(Z\) be obtained from \(Y\) by a positive \(1/m\) surgery on a knot.  There exists a constant \(C = C(Y,k)\) such that if
\[
d(Y) - d(Z) \ge C,
\]
then \(U\cdot HF^{\mathrm{red}}_k(Z) \neq 0\).
\end{theorem}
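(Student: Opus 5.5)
The plan is to convert the hypothesis on \(d(Y)-d(Z)\) into a lower bound on \(V_0(K)\) and then apply Theorem~\ref{prop:uniform_all_m}. By Corollary~\ref{cor25}, for \(Z=Y_{1/m}(K)\) with \(m>0\) we have
\[
d(Y)-2V_0-2M(Y)\le d(Z),
\]
where \(M(Y)\) is the maximal exponent in the odd part of \(HF_{red}(Y)\). Hence
\[
V_0(K)\ge \tfrac12\bigl(d(Y)-d(Z)\bigr)-M(Y).
\]
The quantity \(M(Y)\) depends only on \(Y\), not on \(K\) or \(m\). This is why the constant can be taken to depend only on \(Y\) and \(k\).

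Next I would take the uniform constant \(N=N(d(Y),k)\) from Theorem~\ref{prop:uniform_all_m}, valid for every slope \(1/m\) with \(m>0\), and set
\[
C(Y,k)=2N+2M(Y).
\]
If \(d(Y)-d(Z)\ge C\), the inequality above gives \(V_0(K)\ge N\). Theorem~\ref{prop:uniform_all_m} then yields \(U\cdot HF^{\mathrm{red}}_k(Y_{1/m}(K))\neq 0\), that is, \(U\cdot HF^{\mathrm{red}}_k(Z)\neq0\). When \(M(Y)=0\), Corollary~\ref{cor25} gives the equality \(d(Z)=d(Y)-2V_0\), and the constant simplifies to \(2N\).

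The main obstacle is not this bookkeeping. It is the step inside Theorem~\ref{prop:uniform_all_m} that passes from a straddling summand \(\tau_{d^+_{sm+i}}(V_s)\) of \(\ker D^T_{1/m}\) to a nonzero element of \(U\cdot HF^{\mathrm{red}}_k(Z)\). For general \(Y\), the map \(D^+_{1/m}\) need not be surjective. So \(HF^+(Z)\) only surjects onto \(\ker D^+_{1/m}\) through \(j_*\), and it contains extra classes coming from \(\operatorname{coker}\mathbf{D}^+\).

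To handle this, I would first choose a lift \(\widetilde{x}\) of the element of degree \(k+2\) in the straddling summand; this is where \(V_s\ge2\) is used. I would then use \(U\)-equivariance of \(j_*\): since \(j_*(U\widetilde{x})=Ux\neq0\), the class \(U\widetilde{x}\) is nonzero. It remains to show that \(U\widetilde{x}\) is not in the tower of \(HF^+(Z)\) modulo lower-order corrections. Because \(x\) is \(U\)-torsion, a suitable power \(U^{V_s}\widetilde{x}\) lies in \(\operatorname{im} i_*\), which is finite-dimensional modulo the tower. I would then argue, by a grading comparison with \(d(Z)\), that some lift of degree \(k+2\) can be chosen in the reduced part.

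If this lifting issue proves delicate, the fallback is to read the statement as resting on Theorem~\ref{prop:uniform_all_m} as already established, so that the proof is just the two displayed inequalities above.
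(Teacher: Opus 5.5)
Your argument is the paper's proof. Rearrange the lower bound of Corollary~\ref{cor25} to \(d(Y)-d(Z)\le 2V_0+2M(Y)\), take \(N=N(d(Y),k)\) from Theorem~\ref{prop:uniform_all_m}, and set \(C=2N+2M(Y)\). The extra lifting paragraph is not needed here, since the paper, like your fallback, simply invokes Theorem~\ref{prop:uniform_all_m}; as sketched it would also not work, and the clean fix is a tower-compatibility argument for \(j_*\): the straddling summand only guarantees a non-bottom element \(x\) of degree \(k\), not \(k+2\), and since \(d(Z)\le d(Y)-2V_0\le k-2\) the tower of \(HF^+(Z)\) is nonzero in degree \(k-2\), so no grading comparison with \(d(Z)\) can exclude \(U\widetilde{x}\) from it, whereas \(j_*(U^N HF^+(Z))=U^N\ker D^+_{1/m}\) is a \(U\)-divisible submodule of \(\ker D^T_{1/m}\) and hence lies in its tower, so \(U\widetilde{x}\) in the tower of \(HF^+(Z)\) would force \(Ux\neq 0\) into the tower of \(\ker D^T_{1/m}\), contradicting that \(x\) lies in a torsion summand.
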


\begin{proof}
Let \(M(Y)\) denote the maximal length of an odd‑graded truncated module in \(HF_{\mathrm{red}}(Y)\) as in Corollary \ref{cor25}.  For a positive \(1/m\) surgery we have the lower bound
\[
d(Z) \ge d(Y) - 2V_0 - 2M(Y),
\]
where \(V_0 = V_0(K)\) is the knot invariant.  Rearranging gives
\[
d(Y) - d(Z) \le 2V_0 + 2M(Y). \tag{1}
\]

Proposition~\ref{prop:uniform_all_m} (applied with the target even integer \(k\)) provides a constant \(N = N(d(Y), k)\) such that whenever \(V_0 \ge N\) we have \(U\cdot HF^{\mathrm{red}}_k(Z) \neq 0\).  Choose
\[
C = 2N + 2M(Y).
\]
If \(d(Y) - d(Z) \ge C\), then from (1) we obtain \(2V_0 \ge d(Y)-d(Z)-2M(Y) \ge 2N\), hence \(V_0 \ge N\).  The obstruction follows.
\end{proof}

\begin{remark}
The constant \(C\) depends on \(Y\) through \(M(Y)\) and on both \(Y\) and \(Z\) through \(N(d(Y), 0)\).  For \(Y = S^3\) we have \(M(S^3)=0\) and \(d(Y)=0\), and the theorem recovers the result of \cite[Theorem 1.2]{HKL16} .
\end{remark}

\section{Graded root for $Z_p=\Sigma(p,2p-1,2p+1)$}

\subsection{Plumbings and d-invariant}
In this section we recall the definition of the Ozsv\'ath--Szab\'o $d$--invariant and describe the combinatorial algorithm from \cite{OS03b} that computes it for plumbed three--manifolds.  
We then apply this algorithm to the Brieskorn spheres $\Sigma(p,q,r)$ that satisfy $pq+pr-qr=1$, which are precisely the boundaries of the almost simple linear graphs studied in \cite{KS20a}.

Let $G$ be a negative--definite plumbing graph with vertices $v_j$ and weights $e_j$.  
Assume that $G$ is unimodular (so that its boundary $Y(G)$ is an integer homology sphere) and has at most one bad vertex (i.e., a vertex whose valency exceeds $-e_j$).  
Brieskorn spheres $\Sigma(p,q,r)$ with $pq+pr-qr=1$ are exactly of this type: their plumbing graph is an \emph{almost simple linear graph} (ASL--graph) with central vertex weight $-2$ and three chains of $-2$'s \cite{KS20a}.

The algorithm proceeds as follows.  
Let $\operatorname{Char}(G)$ be the set of characteristic cohomology classes $k\in H^2(X(G);\mathbb{Z})$ (where $X(G)$ is the plumbed $4$--manifold).  
For each vertex $v_j$ we have the evaluation $\langle k, v_j\rangle$, and $k$ is characteristic if $\langle k, v_j\rangle \equiv e_j \pmod 2$ for all $j$.  
Define the \emph{initial set}
\[
\mathfrak{C} = \left\{ k\in\operatorname{Char}(G) \;\middle|\; e_j+2 \le \langle k, v_j\rangle \le -e_j \ \text{for all } j \right\}.
\]
For a given $k\in\mathfrak{C}$ one performs a sequence of moves:  
if there exists a vertex $v$ with $\langle k, v\rangle = -e$, replace $k$ by $k+2\operatorname{PD}(v)$, where $\operatorname{PD}$ denotes Poincar\'e duality.  
Iterating this process eventually terminates at a cohomology class $\ell$ that satisfies either
\[
e_j \le \langle \ell, v_j\rangle \le -e_j+2 \qquad\text{for all }j,
\]
or some $\langle \ell, v_j\rangle > -e_j$.  
The initial $k$ is said to support a \emph{good full path} in the former case.  
Then the $d$--invariant of $Y=Y(G)$ is given by
\[
d(Y) = \max_{k\in\mathfrak{C}'} \frac{k^2 + |G|}{4},
\]
where $\mathfrak{C}'\subseteq\mathfrak{C}$ consists of those $k$ that support a good full path, and $k^2$ is the square of $k$ under the intersection form (the quadratic form of the lattice $H_2(X(G);\mathbb{Z})$).

\begin{theorem}\cite{HKL16}  
For the infinite families \(Z_p=\Sigma(p,2p-1,2p+1)\)with \(p\) is even and oriented as the boundary of a positive-definite plumbing, we have:
\[
d(Z_p)=-p.
\]
\end{theorem}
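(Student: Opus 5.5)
The plan is to compute \(d(Z_p)\) by passing to the orientation-reversed manifold and using the plumbing algorithm recalled above. With the orientation in the statement, \(-Z_p\) bounds the negative-definite ASL plumbing graph \(G_p\) of \cite{KS20a}. The triple \((p,2p-1,2p+1)\) satisfies \(pq+pr-qr = p(2p-1)+p(2p+1)-(4p^2-1)=1\), so \(G_p\) has a central vertex with three chains and at most one bad vertex. The algorithm of \cite{OS03b} therefore applies, and gives
\[
d(-Z_p)=\max_{k\in\mathfrak{C}'}\frac{k^2+|G_p|}{4}.
\]
Since \(d(Z_p)=-d(-Z_p)\) by \cite[Proposition 4.2]{OS03a}, it suffices to prove \(d(-Z_p)=p\).

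\textbf{Step 1: the graph.} I will write down \(G_p\) explicitly from the Seifert invariants. I will solve \(e_0 + \frac{a}{p}+\frac{b}{2p-1}+\frac{c}{2p+1} = \pm\frac{1}{p(2p-1)(2p+1)}\) and expand each fraction as a negative continued fraction. I expect the chains to have lengths growing linearly in \(p\) with mostly \(-2\) weights, plus a few heavier vertices. From this I record \(|G_p|\) and the intersection matrix in a form where \(k^2\) is computable through the inverse form (via \(k^2 = k^{T} Q^{-1} k\)).

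\textbf{Step 2: lower bound.} I will exhibit one characteristic class \(k_0\in\mathfrak{C}\) whose full path is good and satisfies \(k_0^2+|G_p| = 4p\). The natural candidate evaluates to \(e_j+2\) (i.e. \(0\) on \(-2\) vertices) everywhere except at a controlled set of vertices near the ends of the chains. I will check goodness by running the pushing moves, which propagate along the \(-2\) chains like the classical \(E_8\)-type computations. Evenness of \(p\) should enter here, because it governs the parity of the chain lengths and hence whether the path terminates well.

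\textbf{Step 3: upper bound (the main obstacle).} I must show that no \(k\in\mathfrak{C}'\) gives a larger value. Rather than enumerate, I would use Némethi's graded-root description. Since there is one bad vertex, \(-Z_p\) is almost rational, and \(d(-Z_p)\) equals \(-2\min\tau+\frac{K^2+|G|}{4}\), where \(\tau\) is the Laufer-type sequence computed along the central vertex. The quantity \(\tau(n)\) has increments \(\Delta_n = 1 - \lceil n/p\rceil\text{-type terms}\) determined by the three Seifert invariants. For \((p,2p-1,2p+1)\) the increments can be written in closed form, and the minimum is attained at an explicit \(n\). This reduces the upper bound to an elementary floor-function computation. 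That minimum computation, together with matching the canonical class normalisation, is where I expect the real work and the dependence on \(p\) even. Alternatively, since this result is quoted from \cite{HKL16}, I would cross-check against their computation there and simply cite it.
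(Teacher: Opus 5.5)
Your plan would work, and your fallback of citing \cite{HKL16} is all the paper itself does here; it quotes the result without proof. But you have the difficulty backwards. Step 3 is not an obstacle at all, and once Step 1 is carried out the statement takes one line.

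The negative-definite graph $G_p$ for $-Z_p$ has Seifert invariants $e_0=-2$, $(p,1)$, $(2p-1,2p-2)$, $(2p+1,2p)$. So it consists of:
\begin{itemize}
\item a central $-2$ vertex;
\item one leg that is a single vertex of weight $-p$;
\item two legs of $-2$'s, of lengths $2p-2$ and $2p$.
\end{itemize}
Hence $|G_p|=4p$.

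For $p$ even every weight is even, so the intersection lattice is even and $k=0$ is characteristic. Clearly $0\in\mathfrak{C}$. No vertex satisfies $\langle 0,v\rangle=-e_v$, so the full path stops immediately and is good. This gives $d(-Z_p)\ge 4p/4=p$.

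Conversely, $k^2\le 0$ for every $k$ because the form is negative definite. So $(k^2+|G_p|)/4\le p$ for all $k$, and therefore $d(-Z_p)=p$, i.e.\ $d(Z_p)=-p$.

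Your Step 2 target $k_0^2+|G_p|=4p$ in fact forces $k_0=0$. The pushing moves you anticipate along the $-2$ chains never occur. Your heuristic for where evenness of $p$ enters is also off. The chain lengths $2p-2$ and $2p$ are even for every $p$; parity of $p$ matters only through the weight $-p$, i.e.\ through whether $0$ is characteristic. For odd $p$ it is not, and indeed $d(\Sigma(3,5,7))=-2\neq-3$.

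The $\tau$-function route in Step 3 is valid but buys nothing for $d$ alone. The canonical class is $K=(p-2)v^{*}$, with $v^{*}$ dual to the $-p$ vertex, so $K^2=-4p(p-2)^2$ and $(K^2+|G_p|)/4=-p(p-1)(p-3)$. You would therefore have to prove $\min\tau=-p(p-2)^2/2$ by a floor-function analysis of
\[
\Delta_n=1-\lceil n/p\rceil+\lfloor n/(2p-1)\rfloor+\lfloor n/(2p+1)\rfloor .
\]
This is a genuine but laborious computation; it does check out, for example $\min\tau=-8$ at $p=4$. The even-lattice observation gives it for free. The delta-sequence and semigroup analysis that the paper attributes to \cite{HKL16} is needed for the finer graded-root statement $U\cdot HF^{\mathrm{red}}_0(Z_p)=0$, not for the value of the $d$-invariant.
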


\subsection{Graded roots and delta sequences}

In this subsection we recall the combinatorial machinery that allows one to compute the Heegaard Floer homology of plumbed homology spheres, following Némethi \cite{Nem05} and its subsequent developments \cite{CK14, KS20a}.  
Our main objects are Brieskorn spheres $Y=\Sigma(p,q,r)$ satisfying $pq+pr-qr=1$; their plumbing graphs are almost simple linear (ASL) graphs, which are negative definite, unimodular, and have exactly one bad vertex (the central vertex of weight $-2$).

A \emph{graded root} is a pair $(R,\chi)$ where $R$ is an infinite tree and $\chi:\operatorname{Vert}(R)\to\mathbb{Z}$ satisfies: (i) $\chi(u)-\chi(v)=\pm1$ for adjacent vertices; (ii) if $u$ is adjacent to two distinct vertices $v,w$, then $\chi(u)>\min\{\chi(v),\chi(w)\}$; (iii) $\chi$ is bounded below, $|\chi^{-1}(k)|$ is finite for each $k$, and equals $1$ for large $k$.  
From a graded root one constructs an $\mathbb{F}[U]$-module $\mathbb{H}^+(R)$: the $\mathbb{F}$-vector space is freely generated by the vertices, $\deg(v)=2\chi(v)$, and $U\cdot v$ is the sum of neighbours $w$ with $\chi(w)=\chi(v)-1$.  
Némethi's theorem states that for an AR plumbed homology sphere $Y$,
\[
HF^{+}(-Y)\cong \mathbb{H}^{+}(R)\bigl[(k_{\mathrm{can}}^{2}+|G|)/4\bigr],
\]
where $k_{\mathrm{can}}$ is the canonical class. The bracket indicates an absolute grading shift by \((k_{\mathrm{can}}^{2}+|G|)/4\).

For $Y=\Sigma(p,q,r)$ set $N_0=pqr-pq-pr-qr$, $x=pq$, $y=pr$, $z=qr$.  
Let $S_Y=[0,N_0]\cap\langle x,y,z\rangle$ (the numerical semigroup) and $Q_Y=\{N_0-s\mid s\in S_Y\}$.  
Define $X_Y=S_Y\cup Q_Y$ (disjoint, ordered) and $\Delta_Y:X_Y\to\{\pm1\}$ by $+1$ on $S_Y$, $-1$ on $Q_Y$.  
By merging consecutive entries of the same sign we obtain the \emph{reduced delta sequence} $\tilde\Delta_Y$.  
The \emph{tau function} $\tau$ on the extended ordered set $\tilde X_Y^+$ is defined by $\tau(z_{\min})=0$ and $\tau(z)=\sum_{w<z}\tilde\Delta_Y(w)$.  
The graded root $R$ is then built from the intervals $[\tau(z),\infty)$ as described in \cite[Section~2.5]{KS20a}.

We can calculate the graded root for $Z_p=\Sigma(p,2p-1,2p+1)$. The proof in \cite{HKL16} proceeds by a detailed analysis of the numerical semigroup generated by $pq$, $pr$, $qr$ and the corresponding delta sequence.

\begin{theorem}\cite{HKL16}
Let $p$ be an even integer with $p\ge 8$ and let $Z_p=\Sigma(p,2p-1,2p+1)$ oriented as the boundary of a positive-definite plumbing.  
Then the graded root $R_{Z_p}$ has a unique vertex of degree $0$ which is not a leaf, and therefore
\[
U\cdot HF_0^{\text{red}}(Z_p)=0.
\]
\end{theorem}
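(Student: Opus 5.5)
The statement concerns the graded root of \(Z_p=\Sigma(p,2p-1,2p+1)\). The plan is to compute the delta sequence explicitly and read off the local shape of the root near the grading corresponding to \(HF_0\). First, I would check that the triple \((p,q,r)=(p,2p-1,2p+1)\) satisfies \(pq+pr-qr=1\): indeed \(p(4p)-(4p^2-1)=1\). So \(Z_p\) bounds an ASL plumbing, and the Némethi/Karakurt--Şavk machinery recalled above applies. Orientation needs care: \(Z_p\) is oriented as the boundary of a positive-definite plumbing, so \(-Z_p\) is the negative-definite boundary, and Némethi's theorem gives \(HF^+(-(-Z_p))=HF^+(Z_p)\cong\mathbb{H}^+(R)\) up to the shift \((k_{\mathrm{can}}^2+|G|)/4\). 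Combined with \(d(Z_p)=-p\), this shift lets us translate the Floer grading \(0\) into a specific value of \(\chi\) on \(R\). Since the top of the infinite stem sits at grading \(d=-p\), grading \(0\) lies \(p/2\) levels above the minimal stem level.

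Next I would compute the semigroup \(S=\langle x,y,z\rangle\cap[0,N_0]\) with \(x=p(2p-1)\), \(y=p(2p+1)\), \(z=4p^2-1\), and \(N_0=pqr-x-y-z\). The key observation is that \(y-x=2p\) and \(z-x=p+\ldots\) are small relative to the generators, so the elements of \(S\) cluster into blocks around multiples \(n\cdot 2p^2\) (roughly). In the \(n\)-th block the elements are \(ax+by+cz\) with \(a+b+c=n\), and these form a short arithmetic-like cluster whose width grows linearly in \(n\). Using the symmetry \(Q=N_0-S\), I would then interleave \(S\) and \(Q\) and obtain a reduced delta sequence consisting of alternating runs. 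From this I would write \(\tau\) as a piecewise-monotone function whose local maxima and minima are given by explicit quadratic-ish formulas in the block index \(n\).

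The graded root's leaves correspond to the local minima of \(\tau\), and the branching pattern is determined by the local maxima. A vertex of degree \(0\) that is not a leaf and has the property that every other degree-\(0\) vertex is a leaf means the following: all of \(HF^{\mathrm{red}}_0\) is spanned by leaves, and leaves are killed by \(U\) only if they have no neighbour below. That is automatic for leaves in \(\mathbb{H}^+\), because \(U\) maps a vertex to its lower neighbours and a leaf has only an upper neighbour. So \(U\cdot v=0\) for each leaf \(v\). The unique non-leaf degree-\(0\) vertex lies on the stem (it represents the tower element), hence it is not in \(HF^{\mathrm{red}}\). Therefore it suffices to show that, at the level \(\chi\) corresponding to grading \(0\), every local-minimum branch of \(\tau\) reaching that level terminates there or above. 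Equivalently, no local minimum of \(\tau\) lies strictly below that level except along the single branch that connects to the stem. Concretely, I would need the inequalities: the local minima of \(\tau\) other than the global ones are \(\ge\) the grading-\(0\) level, while the branches from the deepest minima merge into the stem above level \(0\) only through one vertex.

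The main obstacle is the explicit bookkeeping of the delta sequence. One must locate exactly when the \(S\)-blocks and \(Q\)-blocks overlap, and compute the values of \(\tau\) at the turning points, tracking the interleaving near the middle \(N_0/2\) where the deepest minima occur. The hypothesis \(p\) even, \(p\ge 8\) should enter precisely in comparing these extreme values with \(p/2\). I would first work out \(p=8,10\) numerically to guess the closed forms for the extrema, then prove them by induction on the block index.
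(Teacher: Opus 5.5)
You follow the same route as \cite{HKL16}, which the paper only cites: compute the delta sequence of $\langle pq,pr,qr\rangle$, build $\tau$, and read off the root $p/2$ levels above $\min\tau$. Your reduction from ``unique non-leaf vertex in degree $0$'' to $U\cdot HF^{\mathrm{red}}_0(Z_p)=0$ is essentially right, with one correction. The tower class in degree $0$ is the sum of \emph{all} degree-$0$ vertices, not the non-leaf vertex $v_0$ alone. Since the other degree-$0$ vertices are leaves, every degree-$(-2)$ vertex is a child of $v_0$, so $U$ sends all of $HF^+_0$ into the tower.

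The real problem is that the computation, which is the whole content of the theorem, is not done, and what you do commit to is wrong. Your block structure is off. Since $z=qr=4p^2-1=x+y-1$, the difference $z-x=2p^2+p-1$ is not small, and the blocks are not indexed by $a+b+c$. With $P=2p^2$ one has $ax+by+cz=(a+b+2c)P+p(b-a)-c$ and $N_0=(2p-4)P-p+1$. So elements of $S$ have the form $pt-c$, and elements of $Q$ have the form $pt+c'$ with $c'\ge 1$. At each multiple of $p$ an $S$-block is followed by a $Q$-block, and the block sizes are tent-shaped, e.g.\ $|S_t|=(N-|t-2pN|)/2+1$ for the unique $N\equiv t \pmod 2$ with $|t-2pN|\le N$. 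This is the bookkeeping that actually produces the reduced delta sequence.

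More seriously, the inequality you propose to prove is false for $Z_p$. You want all non-global local minima of $\tau$ to lie at or above the degree-$0$ level. In fact, near the middle ($t\approx 2p(p-2)$, around $N_0/2$), $\tau$ has a valley containing, besides the unique global minimum, two local minima in each of the degrees $-p+2,\dots,-2$. The theorem holds only because every local maximum separating these minima lies exactly at level $\min\tau+p/2$, i.e.\ in degree $0$. So the whole valley is a single vertex in degree $0$. The other degree-$0$ vertices are leaves coming from sawtooth minima on either side that also sit exactly at that level, and $\tau$ is strictly larger further out.

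The correct criterion is that $\{\tau<L\}$ lies in one maximal run of $\{\tau\le L\}$ (all separating maxima $\le L$, here with equality). The merging must happen at degree $0$, not above it, since merging strictly above would give several non-leaf vertices in degree $0$. As written, your plan aims at a false statement and misses the tight equality that makes the result work.
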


\begin{remark}
The above computation for $Z_p$ was carried out in \cite{HKL16} using a direct analysis of the semigroup.  
A more systematic approach was later developed by Karakurt and Savk \cite{KS20a} (see also \cite{CK14}), who introduced the notion of \emph{packed sequences} for Brieskorn spheres satisfying $pq+pr-qr=1$.  
Their method not only reproduces the reduced delta sequence $\langle p/2,-p/2,1\rangle$ in a few lines, but also applies to infinite families such as $\Sigma(2n+1,4n+1,4n+3)$, $\Sigma(2n+1,3n+2,6n+1)$ and $\Sigma(2n+1,3n+1,6n+5)$.  
Moreover, it allows one to compute the \emph{connected Heegaard Floer homology} $HF_{\text{conn}}$ \cite{HHL21}.  
Thus the techniques initiated in \cite{HKL16} might produce many examples for our generalized obstruction.
\end{remark}

\section{Application}
\subsection{Application on Seifert fibered spaces}

Let \(Z_p = \Sigma(p,2p-1,2p+1)\) with \(p\) even and \(p\ge 8\), oriented as the boundary of a positive‑definite plumbing.  
Recall that  
\[
d(Z_p) = -p,\qquad U\cdot HF^{\mathrm{red}}_0(Z_p)=0.
\]

Fix an arbitrary integer homology sphere \(Y\) (the ambient manifold).  
We will show that for all sufficiently large \(p\), the manifold \(Z_p\) cannot be obtained by surgery on a knot in \(Y\).

\begin{proposition}\label{thm:intro-main3}
Let \(Y\) be an integer homology sphere.  There exists a constant \(C(Y)>0\) (depending only on \(Y\)) such that:

\begin{enumerate}
\item[(i)] If \(p > C(Y)\) (with \(p\) even, \(p\ge 8\)), then \(Z_p\) cannot be expressed as a positive \(1/m\) surgery on a knot in \(Y\) for any \(m>0\).
\item[(ii)] If \(4 < p < -d(Y)\) (with \(p\) even), and if \(Z_p = Y_{-1/m}(K)\) for some knot \(K\subset Y\) and some \(m>0\), then necessarily \(V_0(K)=0\).
\end{enumerate}
\end{proposition}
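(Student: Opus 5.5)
For part (i) the plan is to apply Theorem~\ref{thm:intro-main2} with the grading $k=0$. It yields a constant $C_0=C(Y,0)=2N(d(Y),0)+2M(Y)$ with the following property: whenever $Z=Y_{1/m}(K)$ for some $m>0$ and $d(Y)-d(Z)\ge C_0$, we have $U\cdot HF^{\mathrm{red}}_0(Z)\neq 0$. We then specialise to $Z=Z_p$. By the theorem of \cite{HKL16} recalled in Section~4, $d(Z_p)=-p$, so $d(Y)-d(Z_p)=d(Y)+p$. Define $C(Y)=\max\{C_0-d(Y),\,8\}$; if this is not positive, increase it so that $C(Y)>0$.

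Now suppose $p>C(Y)$ with $p$ even and $p\ge 8$, and suppose $Z_p=Y_{1/m}(K)$. Then $d(Y)-d(Z_p)=d(Y)+p\ge C_0$, so the theorem gives $U\cdot HF^{\mathrm{red}}_0(Z_p)\neq0$. This contradicts the graded root computation of \cite{HKL16}, which gives $U\cdot HF_0^{\mathrm{red}}(Z_p)=0$ for even $p\ge8$. Hence no such $K$ and $m$ exist. The constant depends only on $Y$, because $N(d(Y),0)$ and $M(Y)$ do and $m$ has been handled uniformly by Theorem~\ref{prop:uniform_all_m}. The one point to check is that the grading $k=0$ used in Theorem~\ref{thm:intro-main2} matches the absolute grading convention in the \cite{HKL16} statement for $Z_p$ with its positive-definite orientation. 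I would verify this by noting that both are absolute $\mathbb{Q}$-gradings on $HF^+(Z_p)$ and that the mapping cone gradings were normalised to agree with $HF^+$ of the surgery.

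For part (ii), assume $4<p<-d(Y)$ with $p$ even. Then $d(Z_p)=-p>d(Y)$. The plan is to invoke Theorem~\ref{thm:intro-main1}, or directly Proposition~\ref{prop:obstruction_even_red}, once we know that $HF_{\mathrm{red}}(Z_p)$ is supported in even $\mathbb{Z}/2$-grading. This support statement is the step that needs justification. $Z_p$, with the opposite orientation $-Z_p$, bounds a negative-definite plumbing with a single bad vertex (an ASL graph), so it is almost rational. By Némethi's theorem, $HF^+(-Z_p)\cong\mathbb{H}^+(R)$ up to an even grading shift $(k_{\mathrm{can}}^2+|G|)/4$, and $\mathbb{H}^+(R)$ lives entirely in even degrees $2\chi(v)$. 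The shift is even, since for the $-Z_p$ orientation $d=p$ is even and the tower sits in degree $d$.

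Reversing orientation passes to the dual module, where an even-graded $\mathcal{T}(n)$ summand of $HF_{\mathrm{red}}(-Z_p)$ corresponds to a summand of $HF_{\mathrm{red}}(Z_p)$ in degree shifted by one. This is the main obstacle: we must decide whether $HF_{\mathrm{red}}(Z_p)$ is even- or odd-graded, given that $HF_{\mathrm{red}}(-Z_p)$ is even. I would first try to use the convention implicit in the paper's Remark (plumbed manifolds of negative-definite trees have even reduced homology) together with the fact that the graded root description in Section~4 is stated directly for $Z_p$ with the positive-definite orientation. If that convention holds, then $HF_{\mathrm{red}}(Z_p)$ is concentrated in even grading.

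Granting this, the hypotheses of Proposition~\ref{prop:obstruction_even_red} hold, and $Z_p=Y_{-1/m}(K)$ forces $V_0(K)=0$. Proposition~\ref{prop.2.7} is consistent with this: it does not rule out negative surgery here, because $d(Z_p)>d(Y)$. The condition $p>4$ is only there to keep us in the range where the \cite{HKL16} computation of $d(Z_p)$ is valid.
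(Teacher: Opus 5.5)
Your part (i) is the paper's argument: apply Theorem~\ref{thm:intro-main2} with $k=0$, use $d(Z_p)=-p$, and contradict $U\cdot HF^{\mathrm{red}}_0(Z_p)=0$. Taking $C(Y)=\max\{C(Y,0)-d(Y),8\}$ is fine. Part (ii) follows the paper's route through Theorem~\ref{thm:intro-main1}, equivalently Proposition~\ref{prop:obstruction_even_red}. As written, however, it is only conditional. The one input it needs is that $HF_{\mathrm{red}}(Z_p)$ is supported in even $\mathbb{Z}/2\mathbb{Z}$-grading. You leave this to be ``granted'', and the orientation analysis you give actually points to odd parity. That is a gap in the proposal.

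The gap comes from a dropped sign when you invoke Némethi's theorem. As recalled in Section~4.2, and as in Ozsváth--Szabó's plumbing theorem, the isomorphism reads $HF^+(-Y(G))\cong\mathbb{H}^+(R)[\,\cdot\,]$, where $Y(G)$ is the boundary of the \emph{negative}-definite plumbing. You correctly note that $Y(G)=-Z_p$. So the theorem computes $HF^+(-Y(G))=HF^+(Z_p)$, not $HF^+(-Z_p)$. Hence $HF^+(Z_p)$ itself, with $Z_p$ oriented as the boundary of the positive-definite plumbing, is supported in even degrees, and no duality flip is involved.

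Your remark that orientation reversal shifts reduced gradings by one is correct. You just applied it to the wrong side: it is $HF_{\mathrm{red}}(-Z_p)$ that is odd. This fits with the tower of $HF^+(Z_p)$ sitting in the even degree $d(Z_p)=-p$, and with \cite{HKL16} stating their result in terms of $HF^{\mathrm{red}}_0(Z_p)$.

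With this correction, the hypothesis of Proposition~\ref{prop:obstruction_even_red} holds unconditionally. Part (ii) then follows exactly as in the paper, which simply cites the graded root computation for the even support.
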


\begin{proof}
For part (i), assume \(Z_p = Y_{1/m}(K)\).  By Theroem~\ref{thm:intro-main2}, there exists a constant \(C(Y,0)\) such that if \(d(Y)-d(Z_p) \ge C(Y,0)\) then \(U\cdot HF^{\mathrm{red}}_0(Z_p)\neq 0\).  Since the latter is false, we must have \(d(Y)+p < C(Y,0)\).  Hence if we set \(C(Y)=C(Y,0)-d(Y)\) (or any larger number), then for \(p > C(Y)\) the inequality fails, so such a positive surgery cannot exist.

For part (ii), suppose \(Z_p = Y_{-1/m}(K)\), then \(-p > d(Y)\).  Moreover, from the graded root computation we know that every element of \(HF_{\mathrm{red}}(Z_p)\) has even \(\mathbb{Z}/2\mathbb{Z}\)-grading. Theroem~\ref{thm:intro-main1} then forces \(V_0(K)=0\).  This completes the proof.
\end{proof}

As a concrete illustration of Theroem~\ref{thm:intro-main2}, we compute the constant \(C(Y,0)\) for the ambient manifold \(Y = \Sigma(2,3,5)\) oriented as the boundary of a positive‑definite plumbing, so that \(d(Y) = -2\).  Since \(Y\) is a positive Seifert fibered homology sphere, we have \(M(Y)=0\).

\begin{lemma}\label{lemma:C_for_sigma235}
Let \(Y = \Sigma(2,3,5)\) with \(d(Y) = -2\).  Then the constant \(C(Y,0)\) in ~\ref{thm:intro-main2} can be taken as 
\[
C(Y,0) = 12,
\]
and this value is optimal in the sense that there exist knots with \(V_0(K)=5\) for which the obstruction fails, while for any positive \(1/m\) surgery \(Z\) on a knot \((Y,K)\) with \(V_0(K)\ge 6\) we have \(U\cdot HF^{\mathrm{red}}_0(Z)\neq 0\).
\end{lemma}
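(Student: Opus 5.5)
The plan is to make the constants of Theorem~\ref{prop:uniform_all_m} explicit for $d(Y)=-2$, $k=0$. I will show that $V_0\ge 6$ forces a straddling summand for every $m>0$, and then convert this into $C(Y,0)$ via Corollary~\ref{cor25} with $M(Y)=0$.

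\textbf{Step 1: a straddling summand.} By Lemma~\ref{lem:tau_explicit}, the summand $\tau_{d^+_{sm+i}}(V_s)$ has top grading $-4+ms(s-1)+2si$ and bottom grading $-2-2V_s+ms(s-1)+2si$. We need indices $s\ge1$, $0\le i<m$ with
\[
-4+ms(s-1)+2si\ \ge\ 0 \quad\text{and}\quad -2-2V_s+ms(s-1)+2si\ <\ 0 .
\]
Both gradings are even, so the straddle puts a nonzero element of grading $0$ in $\ker D^T_{1/m}$ that is $U$-divisible within the summand. Lifting as in the Preliminaries, it should give $U\cdot HF^{\mathrm{red}}_0(Z)\neq0$, exactly as in Proposition~\ref{prop:straddle}.

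\textbf{Step 2: case split on $m$.}
\begin{itemize}
\item For $m\ge3$, take $s=1$ and $i=2$. The top grading is $0$ and the bottom is $2-2V_1$, which is negative once $V_1\ge2$. Since $V_1\ge V_0-1$, it suffices that $V_0\ge3$.
\item For $m=2$, take $s=2$ and $i=0$. The top grading is $0$ and the bottom is $-2V_2$, negative once $V_2\ge1$.
\item For $m=1$, take $s=3$ and $i=0$. The top grading is $2$ and the bottom is $4-2V_3$, which requires $V_3\ge3$. Using $V_3\ge V_0-3$, this needs $V_0\ge6$.
\end{itemize}
The case $m=1$ is therefore the binding one, and it should give the threshold $N=6$. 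Since $M(Y)=0$, Corollary~\ref{cor25} gives $d(Z)=-2-2V_0$, so $d(Y)-d(Z)=2V_0$. The theorem's recipe $C=2N+2M(Y)$ then yields $C(Y,0)=12$.

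\textbf{Step 3: optimality (the main obstacle).} Here I must show that $V_0=5$ does not suffice. First I would check that for $m=1$ and $V_0=5$, no summand straddles $0$ when the $V_s$ decrease as fast as allowed, i.e.\ $V_s=\max(5-s,0)$:
\begin{itemize}
\item $s=1$: top grading $-2<0$.
\item $s=2$: top grading $0$, bottom $-2$. This seems to straddle, so the choice of summand and grading conventions must be rechecked.
\end{itemize}
This suggests the true threshold may be lower, and the optimal constant hinges on exactly which summands yield reduced (non-tower) elements. Resolving this requires comparing against the tower $\mathcal{T}^+_{d_0}$. The honest optimality claim would then need a concrete knot in $\Sigma(2,3,5)$ with $V_0=5$, e.g.\ a torus-type knot whose $V_s$ drop by one each step, whose $1/m$ surgery has $U\cdot HF^{\mathrm{red}}_0=0$. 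I would attempt this via a connected sum with an $L$-space knot and verify it through the mapping cone. I expect this construction, not the bound, to be the hard part.
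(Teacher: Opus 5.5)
Your Steps 1--2 follow the paper's proof. You use the same straddling criterion from Lemma~\ref{lem:tau_explicit}, the same choices $(s,i)=(1,2)$ for $m\ge 3$, $(2,0)$ for $m=2$ and $(3,0)$ for $m=1$, the bound $V_s\ge V_0-s$, and $C(Y,0)=2N$ via Corollary~\ref{cor25} with $M(Y)=0$. Two small corrections there. First, for $m=2$, $s=2$, $i=0$ the bottom grading is $-2-2V_2+4=2-2V_2$, not $-2V_2$. So you need $V_2\ge 2$, i.e.\ $V_0\ge 4$, as in the paper's table; this is harmless because $m=1$ is the binding case. Second, the straddle gives a grading-$0$ element $x$ with $Ux\neq 0$ (grading $-2$ still lies in the summand), not a $U$-divisible one.

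The genuine gap is Step 3, which rests on an arithmetic error. For $m=1$ the index $i$ is forced to be $0$, so the top grading is $-4+s(s-1)$. This is $-4$ for $s=1$ and $-2$ for $s=2$, so neither summand reaches degree $0$. Now take the extremal profile $V_s=\max(5-s,0)$:
\begin{itemize}
\item for $s=3$, the top is $2$ and the bottom is $4-2V_3=0$, which is not $<0$;
\item for $s=4$, the bottom is $10-2V_4=8$;
\item for $s\ge 5$, the summands are empty.
\end{itemize}
Hence no summand of $\ker D^{T}_{1/m}$ with $m=1$ straddles $0$, and the criterion really does fail at $V_0=5$. Your suspicion that the threshold is lower is unfounded, and so is the detour through $\mathcal{T}^+_{d_0}$.

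This check is exactly what the paper uses for optimality, and nothing more. It shows that the sufficient condition coming from $V_s\ge V_0-s$ is sharp for $m=1$. It does not exhibit an actual knot in $\Sigma(2,3,5)$ with $V_0=5$ whose surgery has $U\cdot HF^{\mathrm{red}}_0=0$. You are right that the literal existence claim would need such a knot. That means realizing $V_s=5-s$ and then controlling all of $HF^+$ of the surgery, not only $\ker D^{T}_{1/m}$. Your proposal supplies neither that construction nor a correct sharpness computation, so the optimality clause is left unproved.
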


\begin{proof}
We determine the smallest integer \(N\) such that whenever a knot \(K\subset Y\) satisfies \(V_0(K)\ge N\), then for every positive slope \(1/m\) the surgered manifold \(Z = Y_{1/m}(K)\) satisfies \(U\cdot HF^{\mathrm{red}}_0(Z)\neq 0\).  Then by Theroem~\ref{thm:intro-main2}, \(C(Y,0)=2N\).

For a fixed \(m\), we need to find integers \(s\ge 1\) and \(0\le i<m\) such that \(\tau_{d_{sm+i}^+}(V_s)\) straddles degree \(0\), i.e. \(\mathrm{gr}_{\mathrm{top}}\ge0\) and \(\mathrm{gr}_{\mathrm{bottom}}<0\), and \(V_s\ge2\).  Using \(d(Y)=-2\), the grading formulas are
\[
\mathrm{gr}_{\mathrm{top}} = -2 + ms(s-1) + 2si - 2,\qquad
\mathrm{gr}_{\mathrm{bottom}} = -2 - 2V_s + ms(s-1) + 2si.
\]
The conditions become
\[
ms(s-1)+2si \ge 4 ,\qquad
2V_s > ms(s-1)+2si -2 .
\]
Since \(V_s \ge V_0 - s\), a sufficient condition on \(V_0\) is
\[
2(V_0 - s) > ms(s-1)+2si -2 \;\Longrightarrow\; 2V_0 > ms(s-1)+2si + 2s -2.
\]

We minimise the required \(V_0\) over all admissible \((s,i,m)\) as in Table ~\ref{table235}.

\begin{table}[htbp]\label{table235}
\centering
\begin{tabular}{c|c|c|l}
$m$ & Optimal $(s,i)$ & Min $V_0$ & Condition \\ \hline
$1$ & $(3,0)$ & $6$ & $s(s-1)=6\ge4$, $2V_0 > s(s-1)+2s-2 = 10$\\
$2$ & $(2,0)$ & $4$ & $ms(s-1)+2si =4=4$, $2V_0 > 4+2s-2=6 $ \\
$3$ & $(1,2)$ & $3$ & $ms(s-1)+2si = 4=4$, $2V_0 > 4+2s-2=4 $ \\
$\ge 4$ & $(1,2)$ & $3$ & $ms(s-1)+2si=4=4$, $2V_0 > 4+2s-2=4$ \\
\end{tabular}
\caption{Minimal $V_0$ for each slope $m$}
\end{table}

The thresholds are \(N_1=6\), \(N_2=4\), \(N_3=3\), and for \(m\ge4\), \(N_m=3\).  
The maximum over all slopes is \(6\). Therefore for any positive \(1/m\) surgery, if \(V_0(K)\ge 6\) then the obstruction holds. Moreover, \(N=5\) would not work because for \(m=1\) a knot with \(V_0=5\) may avoid the obstruction. Hence the optimal uniform constant is \(N=6\). Then \(C(Y,0)=2N=12\).
\end{proof}

Therefore, we immediately obtain the obstruction in \(Y = \Sigma(2,3,5)\):

\begin{proposition}\label{prop:intro-main4}
Let \(Y = \Sigma(2,3,5)\).  Then for every even integer \(p \ge 14\), the Seifert fibered homology sphere \(Z_p = \Sigma(p,2p-1,2p+1)\) cannot be obtained by surgery on a knot in \(Y\).
\end{proposition}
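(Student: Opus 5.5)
The plan is to rule out separately the two kinds of slopes that can carry one integer homology sphere to another, namely \(+1/m\) and \(-1/m\) with \(m>0\). For \(Y=\Sigma(2,3,5)\) with \(d(Y)=-2\) and \(p\) even, we have \(d(Z_p)=-p\le -14<d(Y)\).

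\textbf{Negative slopes.} These are handled directly by Proposition~\ref{prop.2.7}. A negative \(1/m\) surgery satisfies
\[
d(Y_{-1/m}(K))\ge d(Y)+2V_0(m(K))\ge d(Y),
\]
so it cannot decrease the \(d\)-invariant. Since \(d(Z_p)<d(Y)\), this excludes \(Z_p=Y_{-1/m}(K)\) for every \(m>0\), with no further input needed.

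\textbf{Positive slopes.} Suppose \(Z_p=Y_{1/m}(K)\) for some \(m>0\).

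1. Since \(Y\) is the Poincaré sphere, \(HF_{\mathrm{red}}(Y)=0\), so \(M(Y)=0\). Corollary~\ref{cor25} then gives the equality \(d(Z_p)=d(Y)-2V_0(K)\), that is, \(-p=-2-2V_0\), so \(V_0(K)=(p-2)/2\).

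2. For \(p\ge14\) this gives \(V_0(K)\ge 6\). Equivalently, \(d(Y)-d(Z_p)=p-2\ge 12=C(Y,0)\).

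3. Lemma~\ref{lemma:C_for_sigma235} then forces \(U\cdot HF^{\mathrm{red}}_0(Z_p)\neq0\).

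4. Since \(p\ge14\ge8\) is even, the graded root computation of \cite{HKL16} gives \(U\cdot HF^{\mathrm{red}}_0(Z_p)=0\). This contradicts step 3, so no positive surgery produces \(Z_p\).

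\textbf{Main obstacle.} The only substantive point is the threshold \(V_0\ge 6\) in Lemma~\ref{lemma:C_for_sigma235}, which comes from the worst slope \(m=1\). There the tower \(\tau_{d_3^+}(V_3)\) must straddle degree \(0\) with \(V_3\ge2\).

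To check it, take \(s=3\), \(i=0\). From Lemma~\ref{lem:tau_explicit}, the top grading is
\[
\mathrm{gr}_{\mathrm{top}}=-2+6-2=2\ge0,
\]
and the bottom grading is
\[
\mathrm{gr}_{\mathrm{bottom}}=-2-2V_3+6=4-2V_3.
\]
Since \(V_3\ge V_0-3\ge3\), the bottom grading is \(\le -2<0\), so the tower straddles degree \(0\) and \(V_3\ge 2\) as required. For \(m\ge2\) the table entries require only \(V_0\ge4\) or \(V_0\ge3\), which is weaker. I would also double-check that straddling degree \(0\) by a \(\tau\)-summand of length at least \(2\) really yields a nonzero element of \(U\cdot HF^{\mathrm{red}}_0\) after lifting via \(j_*\), as asserted in Proposition~\ref{prop:straddle}.

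Both slope signs are excluded, so the statement follows.
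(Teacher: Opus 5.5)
Your proposal is correct and follows the paper's own route. Negative slopes are excluded by Proposition~\ref{prop.2.7}, since $d(Z_p)=-p<-2=d(Y)$. For positive slopes, $d(Y)-d(Z_p)=p-2\ge 12=C(Y,0)$ (equivalently $V_0=(p-2)/2\ge 6$ by Corollary~\ref{cor25} with $M(Y)=0$), so Lemma~\ref{lemma:C_for_sigma235} contradicts $U\cdot HF^{\mathrm{red}}_0(Z_p)=0$.

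The lifting point you flag is harmless here. Because $Y$ is an L-space, $\mathbb{B}^{red}=0$ and $D^{+}_{1/m}$ is surjective, so $j_*$ is injective and $\ker D^{T}_{1/m}$ embeds in $HF^+(Z_p)$ with its $\mathcal{T}^+_{d_0}$ summand equal to the tower of $HF^+(Z_p)$; hence $U$ of your degree-$0$ element is nonzero in $HF_{\mathrm{red}}$.
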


The computation for \(Y = \Sigma(2,3,5)\) used only the facts that \(d(Y) = -2\) and \(M(Y)=0\) (i.e., the reduced Floer homology of \(Y\) has no odd‑degree part).  The same reasoning applies to any integer homology sphere \(Y\) with these properties.

\begin{proposition}\label{thm:general_d=-2}
Let \(Y\) be an integer homology sphere such that \(d(Y) = -2\) and with even \(\mathbb{Z}/2 \mathbb{Z}\)-grading, then for every even integer \(p \ge 14\), the Seifert fibered homology sphere \(Z_p = \Sigma(p,2p-1,2p+1)\) cannot be obtained by surgery on a knot in \(Y\).
\end{proposition}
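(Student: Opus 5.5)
The plan is to mirror the proof of Proposition~\ref{prop:intro-main4} and treat the two signs of surgery separately. The only inputs about \(Y\) are \(d(Y)=-2\) and the hypothesis that \(HF_{\mathrm{red}}(Y)\) is supported in even \(\mathbb{Z}/2\mathbb{Z}\)-grading. That hypothesis says exactly that no odd-graded truncated summand \(\mathcal{T}(n_j^-)\) occurs, so \(M(Y)=0\). Corollary~\ref{cor25} then gives the equality \(d(Y_{1/m}(K))=d(Y)-2V_0(K)\) for every knot \(K\subset Y\) and every \(m>0\).

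\textbf{Positive surgeries.} Suppose \(Z_p=Y_{1/m}(K)\). Since \(d(Z_p)=-p\), the equality above gives \(2V_0=d(Y)-d(Z_p)=p-2\). For even \(p\ge 14\) this yields \(V_0\ge 6\).

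The computation in Lemma~\ref{lemma:C_for_sigma235} used only \(d(Y)=-2\) and the grading formulas of Lemma~\ref{lem:tau_explicit}. It therefore applies verbatim and shows that \(V_0\ge 6\) forces a summand \(\tau_{d^+_{sm+i}}(V_s)\) of \(\ker D^T_{1/m}\) that straddles grading \(0\) and has length \(V_s\ge 2\), for every \(m>0\).

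The step I expect to need the most care is turning this into \(U\cdot HF^{\mathrm{red}}_0(Z_p)\ne 0\). The straddling summand lifts to \(HF^+(Z_p)\) via \(j_*\). Its element in degree \(0\) maps nontrivially under \(U\), and because the summand is finite it lies in the reduced part rather than the tower. I would cite this step as in Proposition~\ref{prop:straddle} and Theorem~\ref{prop:uniform_all_m}, taken as established earlier. This contradicts \(U\cdot HF^{\mathrm{red}}_0(Z_p)=0\) from the graded root theorem of \cite{HKL16}.

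\textbf{Negative surgeries.} Suppose \(Z_p=Y_{-1/m}(K)\). Lemma~\ref{lemma:neg1m} gives \(d(Z_p)\ge d(Y)+2V_0(m(K))\ge d(Y)\), that is, \(-p\ge -2\). This is impossible for \(p\ge 14\); equivalently, Proposition~\ref{prop.2.7} rules it out because \(d(Z_p)=-p<-2=d(Y)\).

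Since every integer homology sphere obtained by surgery on a knot in \(Y\) is of the form \(Y_{\pm 1/m}(K)\) with \(m>0\), the two cases together show that no surgery on a knot in \(Y\) produces \(Z_p\) for even \(p\ge 14\).
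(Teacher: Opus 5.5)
Your proposal is correct and is essentially the paper's argument. The paper notes that the computation of Lemma~\ref{lemma:C_for_sigma235} (threshold $N=6$, hence $C(Y,0)=2N+2M(Y)=12$) uses only $d(Y)=-2$ and $M(Y)=0$, so $d(Y)-d(Z_p)=p-2\ge 12$ forces $U\cdot HF^{\mathrm{red}}_0(Z_p)\neq 0$ for positive slopes, while Proposition~\ref{prop.2.7} excludes negative slopes because $d(Z_p)=-p<d(Y)$; reading off $V_0=(p-2)/2\ge 6$ directly from the equality case of Corollary~\ref{cor25}, as you do, is the same constant unwrapped.
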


\begin{remark}
    The conditions \(d(Y) = -2\) and \(M(Y)=0\) are satisfied by several natural manifolds such as \(Y = \Sigma(3,5,7)\), \(Y = \Sigma(3,4,11)\) \cite{KS20a}.
\end{remark}

\subsection{Application on hyperbolic integer homology spheres}

Hom and Lidman \cite{HL16} extended the results of \cite{HKL16}. They yield hyperbolic examples and examples with arbitrarily complicated JSJ decompositions.

\begin{lemma}[{\cite[Propositions~2 and~3]{HL16}}]
\label{lem:HL16-construction}
For integers $j\ge 1$ and $n\gg 0$, let $Z_{j,n}$ be the manifold obtained by $1/n$ surgery on a genus‑$1$ hyperbolic knot in the connected sum $\#_j\Sigma(2,3,5)$ (such a knot exists by \cite[Proposition 5.4]{Tsu03}).  
These manifolds satisfy:
\begin{enumerate}
\item $d(Z_{j,n})\to -\infty$ as $j\to\infty$;
\item $U^{2}\cdot HF_{\mathrm{red}}(Z_{j,n})=0$, i.e. every $\mathbb{F}[U]$-summand of $HF_{\mathrm{red}}(Z_{j,n})$ has length at most $2$.
\end{enumerate}
\end{lemma}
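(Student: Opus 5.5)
The statement is the cited Hom--Lidman lemma, so the plan is to recover its two properties from the surgery machinery already set up in Section~2, applied to the ambient manifold $Y_j=\#_j\Sigma(2,3,5)$. I will first record the Floer theory of $Y_j$. Since $\Sigma(2,3,5)$ is an $L$-space with $d=\pm 2$ (depending on orientation), the connected sum formula gives that $Y_j$ is an $L$-space with $d(Y_j)=\pm 2j$. For property (1) we need the orientation with $d(Y_j)=-2j$; I will fix that orientation, the one convention to check. Then $HF_{\mathrm{red}}(Y_j)=0$ and $M(Y_j)=0$, so Corollary~\ref{cor25} gives the exact formula
\[
d(Z_{j,n})=d(Y_j)-2V_0(K)=-2j-2V_0(K).
\]
For a genus-one knot, property (4) gives $V_1=0$, and property (2) then gives $V_0\in\{0,1\}$. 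Hence $d(Z_{j,n})\le -2j+0$ and $d(Z_{j,n})\ge -2j-2$, so $d(Z_{j,n})\to-\infty$ as $j\to\infty$. This settles (1). The existence of a hyperbolic genus-one knot in $Y_j$ is quoted from \cite{Tsu03}, and hyperbolicity of $Z_{j,n}$ for $n\gg0$ follows from Thurston's hyperbolic Dehn surgery theorem.

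For (2) I will use the mapping cone $\mathbb{X}^+_{1/n}$. Because $Y_j$ is an $L$-space, $\mathbf{B}^+=\mathbf{B}^T$. Since $g(K)=1$, the vertical map $v_k$ is an isomorphism for $k\ge1$ and $h_k$ is an isomorphism for $k\le -1$, so the only complex that can carry reduced homology is $\mathbf{A}_0^+$. As in the $S^3$ case \cite{Gai14}, truncate the cone to finitely many copies. For slope $1/n$ there are $n$ indices $\ell$ with $\lfloor \ell/n\rfloor=0$, so $HF^+(Z_{j,n})$ is built from $n$ copies of $\mathbf{A}_0^+$ together with towers.

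With $\mathbf{B}^{red}=0$ and $D^T$ surjective for positive slopes \cite[Lemma 12]{Gai14}, the exact triangle gives
\[
HF^+(Z_{j,n})\cong\ker \mathbf{D}^+_{1/n}.
\]
The reduced part then splits as $\ker D^T$ modulo its tower, plus $n$ copies of $\mathbf{A}_0^{red}$. Proposition~\ref{prop:kerDT_pos} says the non-tower summands of $\ker D^T$ are $\tau(V_{\lfloor \ell/n\rfloor})$ and $\tau(H_{\lfloor -\ell/n\rfloor})$. Each of these has length at most $\max\{V_0,H_0\}\le1$, or involves $V_s=0$ for $s\ge1$. Hence $U$ kills them.

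It remains to show $U^2\cdot\mathbf{A}_0^{red}=0$. This is the main obstacle, since $CFK^\infty(Y_j,K)$ is not determined by the genus alone. I would argue by bounding the "width" of the complex: $\mathbf{A}_0^+$ is the homology of $C\{\max(i,j)\ge0\}$. Its reduced part is killed by any power of $U$ that carries the square $\{0\le i,j\}$-type generators out of the region. Because $\widehat{HFK}(Y_j,K,a)$ is supported in $|a|\le1$ and $CF^\infty(Y_j)$ is $\mathbb{F}[U,U^{-1}]$ (an $L$-space), every differential in a reduced model moves filtration by at most $2$ in each coordinate. Consequently $U^2$ maps $A_0^{red}$ into the part detected by the towers. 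This is exactly the argument of \cite[Propositions 2, 3]{HL16}, which I would follow, checking that nothing there uses $Y=S^3$ beyond $\mathbf{B}^+$ being a single tower. If this width argument proves delicate, the fallback is to quote \cite{HL16} directly, as the paper does.
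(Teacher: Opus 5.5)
Your treatment of (1) is fine. With $Y_j$ an $L$-space oriented so that $d(Y_j)=-2j$, Corollary~\ref{cor25} gives $d(Z_{j,n})=-2j-2V_0$ with $V_0\in\{0,1\}$. The paper proves nothing here and simply cites \cite{HL16}. The gap in your proposal is in (2), at the step you flag yourself, and it has two parts.

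First, you assert that the reduced part of $\ker\mathbf{D}^+_{1/n}$ splits as $(\ker D^T/\text{tower})\oplus\bigoplus_{\ell=0}^{n-1}\mathbf{A}_0^{red}$, but you give no reason for it. Nothing you say forces $\mathbf{v}_0$ and $\mathbf{h}_0$ to vanish on a complement of the tower in $\mathbf{A}_0^+$; a priori they can hit low elements of $\mathbf{B}^+=\mathcal{T}^+$. So the inclusion $\mathbb{A}^{red}\subset\ker\mathbf{D}$ needs an argument.

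Second, the ``width'' argument does not work as stated. Genus one only bounds $j-i=A(\mathbf{x})\in[-1,1]$ for generators of a reduced model. That caps vertical and horizontal arrows at length $2$. A diagonal arrow $\mathbf{x}\mapsto U^{n}\mathbf{y}$, however, is constrained only by the Maslov grading condition $\operatorname{gr}(\mathbf{y})=\operatorname{gr}(\mathbf{x})-1+2n$, not by $g(K)$. Even granting bounded arrows, the conclusion that $U^2$ maps $A_0^{red}$ ``into the part detected by the towers'' is not derived. Note also that without the splitting, $U^2\mathbf{A}_0^{red}=0$ would only yield $U^3\cdot HF_{\mathrm{red}}(Z_{j,n})=0$ via the extension argument below.

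The missing idea is the short exact sequence $0\to C\{i<0,\,j\ge0\}\to A_0^+\xrightarrow{v_0}B^+\to0$.

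\begin{itemize}
\item Since generators satisfy $|j-i|\le1$, the subcomplex $C\{i<0,j\ge0\}$ sits at the single level $(-1,0)$. Hence it is annihilated by $U$ at chain level, and so is $\ker\mathbf{v}_0$, which is the image of its homology.
\item Because $Y_j$ is an $L$-space, $\mathbf{B}^+=\mathcal{T}^+$. On the tower $\mathbf{A}_0^T$ the map $\mathbf{v}_0$ is $U^{V_0}$, which is already surjective onto $\mathcal{T}^+$.
\item Take $x$ in any complement $R$ of $\mathbf{A}_0^T$, and pick $t\in\mathbf{A}_0^T$ with $\mathbf{v}_0(t)=\mathbf{v}_0(x)$. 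Then $x-t\in\ker\mathbf{v}_0$, so $Ux=Ut\in R\cap\mathbf{A}_0^T=0$. Thus $U\cdot\mathbf{A}_0^{red}=0$, which is stronger than what you aimed for.
\end{itemize}

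With this in hand no splitting is needed. For $x\in\ker\mathbf{D}\cong HF^+(Z_{j,n})$, the element $Ux$ has no reduced components, so $Ux\in\ker D^T$. By Proposition~\ref{prop:kerDT_pos}, the non-tower summands of $\ker D^T$ are copies of $\tau(V_0)$ with $V_0\le1$; the $H_{\lfloor-\ell/n\rfloor}$ and $V_{s\ge1}$ terms vanish. Hence $U\cdot\ker D^T$ is the tower. This tower is also the tower of $HF^+(Z_{j,n})$, since $U\ker\mathbf{D}\subset\ker D^T\subset\ker\mathbf{D}$. Therefore $U^2x$ lies in the tower, which is exactly $U^2\cdot HF_{\mathrm{red}}(Z_{j,n})=0$.
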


We now show that these manifolds are obstructed for every fixed integer homology sphere $Y$, provided $j$ is taken large enough.

\begin{theorem}\label{thm:HL16-generalization}
Let \(Y\) be an arbitrary integer homology sphere.  
There exist infinitely many hyperbolic integer homology spheres which are not surgery on a knot in \(Y\).  
Similarly, one can construct infinitely many examples with arbitrarily complicated JSJ decomposition. 
\end{theorem}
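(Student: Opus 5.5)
The plan is to use the manifolds \(Z_{j,n}\) of Lemma~\ref{lem:HL16-construction} as the candidates, and to rule out both positive and negative \(\pm1/m\) surgeries on knots in \(Y\). Since \(Y\) and \(Z_{j,n}\) are integer homology spheres, any knot surgery from \(Y\) to \(Z_{j,n}\) must have slope \(\pm1/m\) with \(m>0\). By Lemma~\ref{lem:HL16-construction}(1) we may take \(j\) so large that \(d(Z_{j,n})<d(Y)\). Proposition~\ref{prop.2.7} then excludes every negative surgery \(Y_{-1/m}(K)\), because a negative surgery cannot decrease the \(d\)-invariant. It therefore remains to exclude positive surgeries.

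For a positive surgery \(Z_{j,n}=Y_{1/m}(K)\), Corollary~\ref{cor25} gives
\[
d(Y)-d(Z_{j,n})\le 2V_0+2M(Y).
\]
Hence, once \(d(Y)-d(Z_{j,n})\) is large, \(V_0(K)\) is forced to be large. The key step is to strengthen Proposition~\ref{prop:straddle} and Theorem~\ref{prop:uniform_all_m}. Instead of asking only that a summand \(\tau_{d_{sm+i}^+}(V_s)\) straddle a fixed degree \(k\), I would require that \(\ker D^{T}_{1/m}\) contain a summand \(\tau(V_s)\) with \(V_s\ge 3\), or more generally of length \(>2\). Such a summand should produce an element \(x\) with \(U^2x\neq0\) lying in the reduced part of \(HF^+(Z)\), which contradicts Lemma~\ref{lem:HL16-construction}(2).

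The degree bookkeeping works as follows. Using \(V_s\ge V_0-s\), we have \(V_1\ge V_0-1\ge 3\) as soon as \(V_0\ge4\). The summand \(\tau_{d_{m+i}^+}(V_1)\) has top grading \(d(Y)+2i-2\), which is bounded independently of the knot. The tower \(\mathcal{T}^+_{d_0}\), on the other hand, starts at \(d_0=d(Y)-2V_0\), and this is very negative when \(V_0\) is large. I would then choose \(V_0\) large enough that this summand lies partly below \(d(Y)\) but sufficiently far above \(d_0\).

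The main obstacle is the transfer from \(\ker D^{T}_{1/m}\) to \(HF_{\mathrm{red}}(Z)\). An element of \(\ker D^T\) lifts to \(HF^+(Z)\) through \(j_*\), but the lift is not unique. It may also pick up tower components, and when \(Y\) is not an \(L\)-space the contributions from \(\mathbb{B}^{red}\) and \(\mathbb{A}^{red}\) could interfere.

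I would handle this in three steps. First, the lift \(\tilde x\) of the top element \(x\) satisfies \(j_*(U^2\tilde x)=U^2x\neq0\), so \(U^2\tilde x\neq 0\). Second, to place \(\tilde x\) in the reduced part, I would adjust it by a tower element of the same degree, which is possible because \(\mathcal{T}^+_{d(Z)}\) is nonzero in every even degree \(\ge d(Z)\). Third, I would check that \(U^2\) of the adjusted element is still nonzero. This amounts to showing that \(U^2x\) is not in \(j_*\) of the tower, and that holds because \(j_*\) maps the tower of \(HF^+(Z)\) onto \(\mathcal{T}^+_{d_0}\), which is a summand of \(\ker D^T\) distinct from \(\tau(V_1)\). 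Here surjectivity of \(D^T_{1/m}\) is used, as in Proposition~\ref{prop:kerDT_pos}. The resulting constant \(N\) depends only on \(Y\). Taking \(j\) with \(d(Y)-d(Z_{j,n})\ge 2N+2M(Y)\) then gives a contradiction.

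Finally, the \(Z_{j,n}\) are hyperbolic, and there are infinitely many of them as \(j\) grows, because their \(d\)-invariants are distinct. For the JSJ statement, I would run the same argument verbatim on the examples of \cite{HL16} with complicated JSJ decompositions, which share properties (1) and (2).
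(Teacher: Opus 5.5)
Your proposal is correct and follows the paper's proof: you use the same $Z_{j,n}$, exclude negative slopes via $d(Z_{j,n})<d(Y)$, use Corollary~\ref{cor25} to force $V_0\ge 4$ and hence $V_1\ge 3$, and let the $\tau(V_1)$ summand of $\ker D^T_{1/m}$ contradict $U^2\cdot HF_{\mathrm{red}}(Z_{j,n})=0$. Your lifting argument (that $U^2\tilde x$ cannot lie in the tower of $HF^+(Z)$) in fact supplies the step the paper only asserts, namely its claimed direct summand $\mathbb{F}[U]/(U^{V_1})$, which does not account for the cokernel of $D^+_{1/m}$. Two small corrections: the degree bookkeeping and the tower adjustment are unnecessary, and the fact that $j_*$ carries the tower of $HF^+(Z)$ onto $\mathcal{T}^+_{d_0}$ does not come from surjectivity of $D^T_{1/m}$; it holds because $j_*(U^N HF^+(Z))=U^N\ker D^+$ is a quotient of $\mathcal{T}^+$ that contains the infinitely $U$-divisible submodule $\mathcal{T}^+_{d_0}$, and hence equals it.
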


\begin{proof}
The manifolds $Z_{j,n}$ constructed in \cite[Theorem 1]{HL16} are shown to be hyperbolic for infinitely many parameters or to have arbitrarily complicated JSJ decomposition after performing any number of cables and/or Whitehead doubles of the genus‑$1$ hyperbolic knot.

Suppose that for some $j,n$ and some $m>0$ we have $Z_{j,n}=Y_{\pm 1/m}(K)$.  
Recall that $d(Y_{+1/m}(K))\le d(Y)$ while $d(Y_{-1/m}(K))\ge d(Y)$.  
Because $d(Z_{j,n})\to -\infty$ as $j\to\infty$ (Lemma~\ref{lem:HL16-construction}), for sufficiently large $j$ we have $d(Z_{j,n})<d(Y)$, which immediately rules out the negative slope case.  
Thus we may assume $Z_{j,n}=Y_{1/m}(K)$.

For a positive $1/m$ surgery, the correction term satisfies
\[d(Y)-d(Z_{j,n}) \le 2V_{0}(K)+2M(Y),\]  
The left‑hand side becomes arbitrarily large as $j\to\infty$, so $V_{0}(K)\ge 4$ for all sufficiently large $j$.

The mapping cone construction shows that $HF_{\mathrm{red}}(Z_{j,n})$ contains a direct summand isomorphic to $\mathbb{F}[U]/(U^{V_{1}(K)})$ (coming from the $s=1$ part of $\ker D_{1/m}^{T}$).  
Hence $U^{2}\cdot HF_{\mathrm{red}}(Z_{j,n})\neq 0$, contradicting Lemma~\ref{lem:HL16-construction}(2).
\end{proof}

Combined with the previous section, the knot surgery obstruction applies universally among every JSJ class of integer homology spheres.

\subsection{Cobordism with \(b_2(W)\ge 2\)}
Let \(Y\) and \(Z\) be oriented integer homology spheres.  Recall that if there exists a smooth, compact, oriented, simply connected four-manifold \(W\) with boundary \(\partial W = Y \cup (-Z)\) and such that \(W\) is definite and \(b_2(W)=1\), then \(W\) is necessarily the trace of a surgery on a knot in \(Y\) (or in \(Z\) after reversing orientation).  More precisely, in this case \(Z\) can be obtained from \(Y\) by a single integer surgery along a knot. 

The following lemma translates the surgery obstructions established in this paper into a lower bound on the second Betti number of any four-manifold bounded by a given pair of integer homology spheres.

\begin{lemma}\label{lem:b2_ge2}
Let \(Y\) and \(Z\) be oriented integer homology spheres that satisfy the obstruction condition of our main theorem (e.g., Theorem~\ref{thm:intro-main2} or its variant) with $d(Z)<d(Y)$.  Then every smooth, compact, oriented, simply connected four-manifold \(W\) with \(\partial W = Y \cup (-Z)\) must satisfy \(b_2(W) \ge 2\).
\end{lemma}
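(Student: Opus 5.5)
We argue by contradiction: suppose \(W\) is smooth, compact, oriented and simply connected with \(\partial W = Y \cup (-Z)\) and \(b_2(W)\le 1\). Since \(Y\) and \(Z\) are integer homology spheres, the long exact sequence of the pair \((W,\partial W)\) together with Poincaré--Lefschetz duality shows that the intersection form \(Q_W\) on \(H_2(W;\mathbb{Z})\) is unimodular. We treat the cases \(b_2(W)=0\) and \(b_2(W)=1\) separately.

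If \(b_2(W)=0\), then \(W\) is a rational homology cobordism, in fact an integral one since \(\pi_1(W)=1\). The \(d\)-invariant is a homology cobordism invariant \cite{OS03a}, so \(d(Y)=d(Z)\). This contradicts the hypothesis \(d(Z)<d(Y)\).

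If \(b_2(W)=1\), unimodularity forces \(Q_W=(\pm1)\), so \(W\) is definite. By the recalled fact above, simple connectivity and \(b_2=1\) make \(W\) the trace of an integral surgery on a knot \(K\subset Y\), with framing \(\pm1\) because \(Q_W=(\pm1)\). Hence \(Z=Y_{\pm1}(K)\), which is the case \(m=1\) of a \(\pm 1/m\) surgery. The sign is then fixed by the \(d\)-invariants. Since \(d(Z)<d(Y)\), Proposition~\ref{prop.2.7} rules out \(Z=Y_{-1}(K)\), so \(Z=Y_{+1}(K)\). Now we invoke the obstruction assumed in the hypothesis. In the form of Theorem~\ref{thm:intro-main2}, this means \(d(Y)-d(Z)\ge C(Y,k)\) while \(U\cdot HF^{\mathrm{red}}_k(Z)=0\). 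Theorem~\ref{thm:intro-main2} applied with \(m=1\) then gives \(U\cdot HF^{\mathrm{red}}_k(Z)\neq0\), a contradiction. The "variant" obstructions, namely Theorem~\ref{thm:intro-main1} and the \(U^2\)-argument of Theorem~\ref{thm:HL16-generalization}, are handled the same way: each excludes \(Z=Y_{\pm1}(K)\) outright.

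The main obstacle is the topological step identifying \(W\) with a knot trace. For a simply connected \(W\) with \(b_2=1\), this is the claim recalled in the paper, and it is the only point where simple connectivity is really used, so I will simply cite it. A more careful argument would build \(W\) from a handle decomposition with one 2-handle attached along \(Y\times I\) and cancel any 1- and 3-handles. The cancellation is not automatic in dimension four, so it is cleanest to rely on the stated fact. A second, minor point is to confirm that orientation conventions put the surgery on \(Y\) rather than on \(-Z\). This is harmless, because the \(d\)-invariant inequality \(d(Z)<d(Y)\) already pins down which sign of surgery can occur.
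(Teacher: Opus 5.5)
Your outline follows the paper's proof step for step. You exclude $b_2(W)=0$ by homology-cobordism invariance of $d$. For $b_2(W)=1$, unimodularity gives $Q_W=(\pm1)$, $W$ is identified with a knot trace, Proposition~\ref{prop.2.7} fixes the sign, and Theorem~\ref{thm:intro-main2} with $m=1$ gives the contradiction.

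The problem is the step you flagged yourself. It is not merely ``not automatic'': the cited ``recalled fact'' is false, so citing it does not close the gap, and the paper's proof rests on the same unjustified claim. Here is a counterexample. Take $Y=S^3$, and let $X=(S^3\times[0,1])\cup h^2$ be the trace of $+1$ surgery on the right-handed trefoil; this is a cobordism from $S^3$ to the Poincar\'e sphere $P$. Let $M$ be a contractible Mazur manifold with $\partial M=\Sigma(2,5,7)$ (Akbulut--Kirby). Form the boundary sum $W=X\natural M$ along the outgoing end. Then $W$ is smooth, compact and simply connected by van Kampen, with $H_2(W)\cong\mathbb{Z}$ and $Q_W=(+1)$. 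Its outgoing end is $Z=P\#\Sigma(2,5,7)$, which is reducible. By Gordon--Luecke, as recalled in the introduction, $Z$ is therefore not surgery on any knot in $S^3$. So $W$ is not a knot trace from either end (a trace built on $Z$ would exhibit $Z$ as surgery on the dual knot in $S^3$). In particular, the weaker conclusion ``$Z=Y_{\pm1}(K)$ for some $K$'' does not follow from the hypotheses on $W$.

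This example is not meant to satisfy the Floer-theoretic hypotheses of the lemma, so it does not disprove the lemma itself. It shows that the reduction to knot surgery is invalid. What your argument actually proves is the lemma for cobordisms $W$ that admit a handle decomposition relative to $Y$ with a single $2$-handle and no $1$- or $3$-handles. That is just a restatement of $Z\neq Y_{\pm1}(K)$. To handle an arbitrary simply connected $W$, you would need an obstruction that survives gluing extra pieces (such as $M$, or a homology cobordism) onto a knot trace. The mapping-cone analysis of Section~3 does not see such $W$, and the condition $U\cdot HF^{\mathrm{red}}_k(Z)=0$ is not a homology cobordism invariant, so a genuinely new idea is required.

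Separately, your aside that Theorem~\ref{thm:intro-main1} ``excludes $Z=Y_{\pm1}(K)$ outright'' is inaccurate. That theorem assumes $d(Z)>d(Y)$, which contradicts the lemma's hypothesis, and it only concludes $V_0(K)=0$.
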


\begin{proof}
If $b_2(W)=0$, then $W$ is a rational homology cobordism between $Y$ and $Z$. The $d$-invariant is invariant under rational homology cobordism (Ozsváth--Szabó \cite{OS03a}), so $d(Y)=d(Z)$, contradicting $d(Z)<d(Y)$.

Assume that there exists a four-manifold \(W\) with \(\partial W = Y \cup (-Z)\) and \(b_2(W)=1\).  
Since $d(Z)<d(Y)$, the intersection form cannot be $(-1)$ (see Proposition\ref{prop.2.7}); it must be $(1)$. In this case, $W$ is diffeomorphic to the trace of a $+1$ surgery on a knot $K \subset Y$, so $Z = Y_{+1}(K)$. Applying Theorem~\ref{thm:intro-main2} with $m=1$ and the given $k$, the assumption $d(Y)-d(Z) \ge C(Y,k)$ forces $U\cdot HF_k^{\mathrm{red}}(Z) \neq 0$, contradicting the hypothesis $U\cdot HF_k^{\mathrm{red}}(Z)=0$. Thus $b_2(W)\neq 1$ as well. Hence $b_2(W) \ge 2$.
\end{proof}

We now translate these surgery obstructions into a statement about the topology of four-manifolds bounding a given pair of homology spheres.

\begin{proposition}\label{prop:intro-main5}
Let $Y$ be an integer homology sphere.  
Let $Z_p = \Sigma(p,2p-1,2p+1)$ with $p$ even and $p \ge C(Y)$ in ~\ref{thm:intro-main3}.  
Then every smooth, compact, oriented, simply connected four-manifold $W$ with $\partial W = Y \cup (-Z_p)$ must satisfy
\(b_2(W) \ge 2.\)
\end{proposition}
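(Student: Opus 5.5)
The plan is to deduce the statement from Lemma~\ref{lem:b2_ge2}, applied with \(Z=Z_p\) and \(k=0\). Two hypotheses of that lemma must be checked: \(d(Z_p)<d(Y)\), and the obstruction condition \(d(Y)-d(Z_p)\ge C(Y,0)\) together with \(U\cdot HF^{\mathrm{red}}_0(Z_p)=0\).

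First, recall \(d(Z_p)=-p\) and \(U\cdot HF^{\mathrm{red}}_0(Z_p)=0\) from the results of \cite{HKL16} quoted in Section~4. Next, the constant \(C(Y)\) of Proposition~\ref{thm:intro-main3} was chosen as \(C(Y)=C(Y,0)-d(Y)\). Replacing it by a larger number if necessary, we may assume \(C(Y,0)>0\) and \(C(Y)>-d(Y)\). Then \(p\ge C(Y)\) gives
\[
d(Y)-d(Z_p)=d(Y)+p\ge C(Y,0)>0 .
\]
This yields both \(d(Z_p)<d(Y)\) and the quantitative hypothesis of Theorem~\ref{thm:intro-main2}.

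With these hypotheses in place, the argument runs as in the proof of Lemma~\ref{lem:b2_ge2}. If \(b_2(W)=0\), then \(W\) is a rational homology cobordism, so \(d(Y)=d(Z_p)\), which is a contradiction. If \(b_2(W)=1\), then \(W\) is simply connected with unimodular form \((\pm1)\). The sign \(-1\) would realize \(Z_p\) as a negative surgery on a knot in \(Y\), which Proposition~\ref{prop.2.7} forbids because \(d(Z_p)<d(Y)\). In the remaining case \(Z_p=Y_{+1}(K)\), and Theorem~\ref{thm:intro-main2} with \(m=1\), \(k=0\) forces \(U\cdot HF^{\mathrm{red}}_0(Z_p)\neq0\), which is a contradiction. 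Hence \(b_2(W)\ge2\).

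The main obstacle is the step identifying a simply connected \(W\) with \(b_2=1\) between homology spheres as a single \(\pm1\) surgery trace. The paper asserts this in the preamble to Lemma~\ref{lem:b2_ge2}, and I would cite it rather than reprove it. I would also note that Theorem~\ref{thm:intro-main2} really only uses the \(d\)-invariant and mapping cone data of the surgery. So even without a handle-decomposition statement, the argument would go through once \(Z_p\) is expressed as \(Y_{+1}(K)\).

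A second point needing care is the boundary case \(p=C(Y)\), since Proposition~\ref{thm:intro-main3} uses a strict inequality. This is handled by the freedom to enlarge \(C(Y)\), with \(p\) kept even and \(p\ge 8\), as noted above.
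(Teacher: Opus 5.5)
You are following the paper's route. The paper's proof invokes Lemma~\ref{lem:b2_ge2} to turn a $b_2=1$ cobordism into a $\pm1$ surgery and then excludes that surgery; you rerun that lemma with $Z=Z_p$ and $k=0$. Your bookkeeping is tidier than the paper's in three places:
\begin{itemize}
\item You check $d(Z_p)<d(Y)$, which is automatic since $C(Y,0)=2N+2M(Y)>0$.
\item You treat the case $b_2=0$.
\item You exclude the sign $-1$ with Proposition~\ref{prop.2.7}. The paper's proof instead claims that the proposition defining $C(Y)$ rules out all $\pm1/m$ surgeries, which its part (ii) does not do.
\end{itemize}
You also do not need to enlarge $C(Y)$ to handle $p=C(Y)$. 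The bound $p\ge C(Y)$ already gives $d(Y)-d(Z_p)\ge C(Y,0)$, and the theorem providing $C(Y,k)$ uses a non-strict inequality.

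\textbf{The gap.} The genuine gap is the step you flag yourself. It cannot be closed by citing the sentence before Lemma~\ref{lem:b2_ge2}, because that sentence is false. A smooth simply connected $W$ with $b_2(W)=1$ between homology spheres need not have a relative handle decomposition consisting of a single $2$-handle. In dimension four the $1$- and $3$-handles cannot be cancelled in general.

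Here is a counterexample. Let $X$ be the trace of $+1$ surgery on the trefoil, a cobordism from $S^3$ to $\Sigma=S^3_{+1}(T_{2,3})$. Let $H=(\Sigma\times I)\natural C$, where $C$ is a Mazur manifold: contractible, with $\partial C$ a homology sphere with nontrivial $\pi_1$. Then $W=X\cup_\Sigma H$ has the following properties:
\begin{itemize}
\item it is simply connected, by van Kampen;
\item its intersection form is $(+1)$;
\item it ends at the reducible manifold $\Sigma\#\partial C$.
\end{itemize}
By the Gordon--Luecke fact recalled in the introduction, $\Sigma\#\partial C$ is not surgery on any knot in $S^3$.

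In general, $W$ can contain a nontrivial simply connected homology cobordism either before or after the $2$-handle. The invariant $d$ survives this, but $HF_{\mathrm{red}}$ does not.
\begin{itemize}
\item If the homology cobordism goes $Y\to Y'$ first, you only get $Z_p=Y'_{+1}(K')$. Then $M(Y')$, and hence the constant $C(Y',0)$, is uncontrolled.
\item If it comes after the trace, you only get that $Z_p$ is homology cobordant to some $Y_{+1}(K)$. Then you know nothing about $U\cdot HF^{\mathrm{red}}_0$ of that surgery.
\end{itemize}

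Your fallback remark is circular: writing $Z_p=Y_{+1}(K)$ is exactly what the handle statement was supposed to supply. As it stands, your argument, like the paper's, proves only that no cobordism obtained from $Y\times I$ by attaching one $2$-handle exists, i.e.\ that $Z_p$ is not $\pm1$ surgery on a knot in $Y$. The statement for arbitrary simply connected $W$ needs either an extra restriction on $W$ or an obstruction that applies directly to definite cobordisms with $b_2=1$.
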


\begin{proof}
Assume, for contradiction, that there exists a smooth, compact, oriented, simply connected four-manifold $W$ with $\partial W = Y \cup (-Z_p)$ and $b_2(W)=1$.  
By Lemma~\ref{lem:b2_ge2}, such a $W$ would force $Z_p$ to be obtained by a $\pm1$ surgery on a knot in $Y$.  
However, Proposition~\ref{thm:intro-main3} states that for all even $p \ge C(Y)$, the manifold $Z_p$ cannot be expressed as a $\pm1/m$ surgery on a knot in $Y$ (for any $m>0$).  In particular, it cannot be obtained by a $\pm1$ surgery.  
This contradiction shows that $b_2(W) \ge 2$.
\end{proof}

\begin{remark}
The same argument applies to the manifolds $Z_{j,n}$ constructed in Section~5.2. For every integer homology sphere $Y$, every smooth cobordism $W$ from $Y$ to any such $Z_{j,n}$ (with $j$ sufficiently large) must have $b_2(W) \ge 2$. This establishes a universal lower bound for all integer homology spheres.
\end{remark}

\bibliographystyle{plain}
\bibliography{references}

@article{Lic62,
  author = {W. B. R. Lickorish},
  title  = {A representation of orientable combinatorial 3-manifolds},
  journal= {Ann. of Math. (2)},
  volume = {76},
  year   = {1962},
  pages  = {531--540},
}

@article{Wal60,
  author = {A. H. Wallace},
  title  = {Modifications and cobounding manifolds},
  journal= {Canad. J. Math.},
  volume = {12},
  year   = {1960},
  pages  = {503--528},
}

@article{GL89,
  author  = {C. McA. Gordon and J. Luecke},
  title   = {Knots are determined by their complements},
  journal = {J. Amer. Math. Soc.},
  volume  = {2},
  number  = {2},
  pages   = {371--415},
  year    = {1989}
}

@article{BL90,
  author = {S. Boyer and D. Lines},
  title  = {Surgery formulae for {C}asson's invariant and extensions to homology lens spaces},
  journal= {J. Reine Angew. Math.},
  volume = {405},
  year   = {1990},
  pages  = {181--220},
}

@article{Tau87,
  author = {C. H. Taubes},
  title  = {Gauge theory on asymptotically periodic 4-manifolds},
  journal= {J. Differential Geom.},
  volume = {25},
  year   = {1987},
  pages  = {363--430},
}

@incollection{Auc97,
  author    = {D. Auckly},
  title     = {Surgery numbers of 3-manifolds: {A} hyperbolic example},
  booktitle = {Geometric topology ({A}thens, {GA}, 1993)},
  series    = {AMS/IP Stud. Adv. Math.},
  volume    = {2},
  publisher = {Amer. Math. Soc.},
  address   = {Providence, RI},
  year      = {1997},
  pages     = {21--34}
}

@article{Kir95,
  author = {R. Kirby},
  title  = {Problems in low-dimensional topology},
  year   = {1995},
}

@book{Sav02,
  author    = {N. Saveliev},
  title     = {Invariants for homology 3-spheres},
  series    = {Encycl. Math. Sci.},
  volume    = {140},
  publisher = {Springer},
  address   = {Berlin},
  year      = {2002},
}

@article{OS04a,
  author = {P. Ozsváth and Z. Szabó},
  title  = {Holomorphic disks and knot invariants},
  journal= {Adv. Math.},
  volume = {186},
  number  = {1},
  year   = {2004},
  pages  = {58--116},
}

@phdthesis{Ras03, 
author = {J. Rasmussen}, 
title = {Floer homology and knot complements}, 
school = {Harvard University}, 
year = {2003}
}

@article{Ni09,
  author  = {Y. Ni},
  title   = {Link {F}loer homology detects the {T}hurston norm},
  journal = {Geom. Topol.},
  volume  = {13},
  number  = {5},
  pages   = {2991--3019},
  year    = {2009}
}

@article{OS11,
  author = {P. Ozsváth and Z. Szabó},
  title  = {Knot {F}loer homology and rational surgeries},
  journal= {Algebr. Geom. Topol.},
  volume = {11},
  number  = {1},
  year   = {2011},
  pages  = {1--68},
}

@article{NW13,
  author  = {Y. Ni and Z. Wu},
  title   = {Cosmetic surgeries on knots in {$S^3$}},
  journal = {J. Reine Angew. Math.},
  volume  = {706},
  pages   = {1--17},
  year    = {2015}
}

@article{Gai14,
  author = {F. Gainullin},
  title  = {The mapping cone formula in {H}eegaard {F}loer homology and {D}ehn surgery on knots in {$S^3$}},
  journal= {Algebr. Geom. Topol.},
  number  = {4},
  volume = {17},
  year   = {2017},
  pages  = {1917--1951},
}

@article{OS03a,
  author = {P. Ozsváth and Z. Szabó},
  title  = {Absolutely graded {F}loer homologies and intersection forms for four-manifolds with boundary},
  journal= {Adv. Math.},
  volume = {173},
  number  = {2},
  year   = {2003},
  pages  = {179--261},
}

@article{OS03b,
  author = {P. Ozsváth and Z. Szabó},
  title  = {On the {F}loer homology of plumbed three-manifolds},
  journal= {Geom. Topol.},
  volume = {7},
  year   = {2003},
  pages  = {185--224},
}

@article{Nem05,
  author = {A. Némethi},
  title  = {On the {O}zsváth–{S}zabó invariant of negative definite plumbed 3-manifolds},
  journal= {Geom. Topol.},
  volume = {9},
  year   = {2005},
  pages  = {991--1042},
}

@article{HKL16,
  author = {J. Hom and Ç. Karakurt and T. Lidman},
  title  = {Surgery obstructions and {H}eegaard {F}loer homology},
  journal= {Geom. Topol.},
  volume = {20},
  year   = {2016},
  pages  = {2219--2251},
}

@article{CK14,
  author = {M. B. Can and Ç. Karakurt},
  title  = {Calculating {H}eegaard–{F}loer homology by counting lattice points in tetrahedra},
  journal = {Acta Math. Hung.},
  volume  = {144},
  number  = {1},
  pages   = {43--75},
  year    = {2014}
}

@article{KS20a,
  author = {Ç. Karakurt and O. Savk},
title   = {Ozsv{\'a}th--{S}zab{\'o} {$d$}-invariants of almost simple linear graphs},
  journal = {J. Knot Theory Ramif.},
  volume  = {29},
  number  = {5},
  pages = {17},
  year    = {2020}
}

@article{HHL21,
  author = {K. Hendricks and J. Hom and T. Lidman},
  title  = {Applications of involutive {H}eegaard {F}loer homology},
  journal = {J. Inst. Math. Jussieu},
  volume  = {20},
  number  = {1},
  pages   = {187--224},
  year    = {2021}
}

@article{Gai15,
  author = {F. Gainullin},
  title  = {Heegaard {F}loer homology and knots determined by their complements},
  journal= {Algebr. Geom. Topol.},
  volume = {18},
  number  = {1},
  year   = {2018},
  pages  = {69--109},
}

@article{HL16,
  author  = {J. Hom and T. Lidman},
  title   = {A note on surgery obstructions and hyperbolic integer homology spheres},
  journal = {Proc. Am. Math. Soc.},
  volume  = {146},
  number  = {3},
  pages   = {1363--1365},
  year    = {2018}
}

@article{Tsu03,
  author  = {Y. Tsutsumi},
  title   = {Universal bounds for genus one {S}eifert surfaces for hyperbolic knots and surgeries with non-trivial {JSJT}-decompositions},
  journal = {Interdiscip. Inf. Sci.},
  volume  = {9},
  number  = {1},
  pages   = {53--60},
  year    = {2003}
}

\end{document}